\newcommand{\overbar}[1]{\mkern 1.5mu\overline{\mkern-1.5mu#1\mkern-1.5mu}\mkern 1.5mu}
\newtheorem{thm}{Theorem}[section] 
\newtheorem{prop}[thm]{Proposition}
\newtheorem{mainthm}[thm]{Main Theorem}
 \theoremstyle{definition} 
 \newtheorem{tab}[thm]{Table} 
 \theoremstyle{definition}
\newtheorem{rem}[thm]{Remark}
\begin{document}
\baselineskip=17pt

\title{Cuspidal divisor class groups of non-split Cartan modular curves}
 
\author{Pierfrancesco Carlucci\\
Dipartimento di Matematica\\
 Universit\'a degli Studi di Roma Tor Vergata\\
  Via della Ricerca Scientifica 1, 00133, Rome, Italy\\ 
E-mail: pieffecar@libero.it}
\date{30 April, 2016}

\maketitle


\renewcommand{\thefootnote}{}

\footnote{2010 \emph{Mathematics Subject Classification}: Primary  11G16; Secondary 11B68, 13C20.}

\footnote{\emph{Key words and phrases}: Siegel Functions, Modular Units, Cuspidal Divisor Class Group, Non-Split Cartan Curves, Generalized Bernoulli Numbers.}

\renewcommand{\thefootnote}{\arabic{footnote}}
\setcounter{footnote}{0}
\begin{abstract}
I find an explicit description of modular units in terms of Siegel functions for the modular curves $X^+_{ns}(p^k) $ associated to the normalizer of a non-split Cartan subgroup of level $ p^k $ where $ p\not=2,3 $ is a prime. The Cuspidal Divisor Class Group $  \mathfrak{C}^+_{ns}(p^k) $  on $X^+_{ns}(p^k)$ is explicitly described as a module over the group ring $R = \mathbb{Z}[(\mathbb{Z}/p^k\mathbb{Z})^*/\{\pm 1\}] $.
In this paper I give a formula involving generalized Bernoulli numbers $ B_{2,\chi} $ for $  |\mathfrak{C}^+_{ns}(p^k)| $. 
\end{abstract}

\section{Motivation and overview} 
Let $ X^+_{ns}(n) $ be the modular curve associated to the normalizer of a non-split Cartan subgroup of level $ n$. One noteworthy reason for studying these curves is the Serre's uniformity problem over $ \mathbb{Q} $ stating that there exists a constant $ C >0 $ so that, if $ E $ is an elliptic curve over $ \mathbb{Q} $ without complex multiplication, then the Galois representation:  
$$ \rho_{E,p}: \mbox{Gal}(\overbar{\mathbb{Q}},\mathbb{Q}) \rightarrow \mbox{GL}_2(\mathbb{F}_p) $$ 
attached to the elliptic curve $ E $ is onto for all primes $ p>C$ (see \cite{Serre72} and \cite[pag. 198]{KL}).   
If the Galois representation were not surjective, its image would be contained in one of the maximal proper subgroups
of $ \mbox{GL}_2(\mathbb{F}_p) $. These subgroups are:\\
 
\noindent 1. A Borel subgroup;\\
2. The normalizer of a split Cartan subgroup;\\
3. The normalizer of a non-split Cartan subgroup; \\
4. A finite list of exceptional subgroups. \\

Serre himself showed that if  $ p > 13 $ the image of $ \rho_{E,p} $ is not contained in an exceptional subgroup. Mazur in \cite{Mazur} and Bilu-Parent-Rebolledo in \cite{BiPaRe} presented analogous results for Borel subgroups and split Cartan subgroups respectively. The elliptic curves over $ \mathbb{Q} $ for which the image of the Galois representation is contained in the normalizer of a non-split Cartan subgroup are parametrized by the non-cuspidal rational points of $X^+_{ns}(p) $. Thus the open case of Serre's uniformity problem can be reworded in terms of determining whether there exist $ \mathbb{Q}$-rational points on $X^+_{ns}(p) $, that do not arise from elliptic curves with complex multiplication.
 
This paper focuses on an aspect of the curves $X^+_{ns}(p^k) $ that has never been treated before: their Cuspidal Divisor Class Group $  \mathfrak{C}^+_{ns}(p^k)$, a finite subgroup of the Jacobian $  J^+_{ns}(p^k)$ whose support is contained in the set of cusps of $X^+_{ns}(p^k) $. Let $ \mathfrak{D}_{ns}^+(p^k) $ be the free abelian group generated by the cusps of $X^+_{ns}(p^k) $, let  $ \mathfrak{D}_{ns}^+(p^k)_0 $ be its subgroup consisting of elements of degree 0 and let  $ \mathfrak{F}_{ns}^+(p^k) $ be the group of divisors of modular units of $X^+_{ns}(p^k)$, i.e. those modular functions on $X^+_{ns}(p^k)$ in the modular function field $ F_{p^k} $, which have no zeros and poles in the upper-half plane. We define:
 $$  \mathfrak{C}^+_{ns}(p^k):= \mathfrak{D}_{ns}^+(p^k)_0 / \mathfrak{F}_{ns}^+(p^k).$$
In \cite{Siegelgenerator} Kubert and Lang gave an explicit and complete description of the group of modular units of $ X(p^k) $ in terms of Siegel functions $ g_a(\tau) $ (see \cite{Lang:ef} or \cite{Siegel}) with $a \in \frac{1}{p^k}\mathbb{Z}^2 \setminus \mathbb{Z}^2 $. We will define the set of functions $$ \{ G^+_h(\tau)\}_{h \in  ((\mathbb{Z} / p^k \mathbb{Z})^*/\{\pm 1\})} $$ in terms of classical Siegel functions and we will prove the following result:  \\

\noindent \textbf{Theorem \ref{powprod}}
\textit{If $ p \not= 2,3 $, the group of modular units of the modular curve $ X^+_{ns}(p^k) $ consists (modulo constants) of power products:}
$$  g(\tau)= \prod_{h \in  ((\mathbb{Z} / p^k \mathbb{Z})^*/\{\pm 1\}) }{{G^+_h}^{n^+_h}(\tau)} $$
\textit{where
$$  G^+_h(\tau) = \prod_{t \in ((\mathbf{O}_K / p^k \mathbf{O}_K)^*/\{\pm 1\}) , \pm|t|= h}g_{[t]}(\tau)  $$
 \textit{and} $ d=\displaystyle\frac{12}{\gcd(12,p+1)} \mbox{ divides }  \sum_{h}n^+_h$.}

In \cite[Chapter 5]{KL} Kubert and Lang studied the Cuspidal Divisor Class Group on the modular curve $ X(p^k) $. Since their description utilizes the parametrization of the set of cusps of $ X(p^k) $ by the elements of the quotient $ C_{ns}(p^k)/\{\pm 1\} $, it appears natural to develop and extend their techniques to non-split Cartan modular curves. Kubert and Lang proved the following:\\ 

\noindent
\textbf{Theorem \ref{CDCG}} \textit{If $ p\ge 5 $ consider $ R:= \mathbb{Z}[C_{ns}(p^k)/\{\pm 1\}] $ and let $ R_0 $ be the ideal of $ R $ consisting of elements of degree $0$.  
The Cuspidal Divisor Class Group $ \mathfrak{C}_{p^k} $ on $ X(p^k) $ is an $R-$module, more precisely there exists a Stickelberger element $ \theta \in \mathbb{Q}[C_{ns}(p^k)/\{\pm 1\}]$ such that,  under the identification of the group $ C_{ns}(p^k)/\{\pm 1\} $ with the set of cusps at level $ p^k $, the ideal $ R \cap R \theta  $ corresponds to the group of divisors of units in the modular function field $ F_{p^k} $ and:
$$ \mathfrak{C}_{p^k} \cong R_0 / R \cap R \theta. $$}
\noindent
In this theorem the authors exhibited an isomorphism reminding to a classical result in cyclotomic fields theory. Let $ J $ be a fractional ideal of $ \mathbb{Q}(\zeta_m) $ and $ G= $\mbox{Gal}$( \mathbb{Q}(\zeta_m)/\mathbb{Q}) \cong (\mathbb{Z}/m\mathbb{Z})^*$. Consider $\mathbb{Z}[G]$ acting on the ideals and ideal classes in the natural way: if $ x= \sum_{\sigma}x_{\sigma} \sigma  $ then $ J^x := \prod_{\sigma}(J^{\sigma})^{x_{\sigma}}.$ We have the following result:\\
\\ \textbf{Stickelberger's Theorem} \cite[pag. 333]{Washington} \textit{Define the Stickelberger element: 
$$ \theta = \sum_{a \scriptsize{\mbox{ mod }} m, (a,m)=1}\displaystyle \left\langle  \frac{a}{m}  \right\rangle \sigma_a^{-1} \in \mathbb{Q}[G].$$
The Stickelberger ideal $ \mathbb{Z}[G]\cap\theta\mathbb{Z}[G] $ annihilates the ideal class group of $ \mathbb{Q}(\zeta_m)$.}\\
 
\noindent Along these lines, the main result can be summarized as follows:\\ 
 
\noindent
\textbf{Main Theorem \ref{main}}\textit{ Consider $ p \ge 5 $, $ H:=(\mathbb{Z}/p^k\mathbb{Z})^*/\{\pm 1\}$ and $ w $ a generator of $ H $. There exists a Stickelberger element 
$$ \theta := \displaystyle\frac{p^k}{2} \sum_{i=1}^{\frac{p-1}{2}p^{k-1}} {\displaystyle\sum_{ \pm|s|=w^i, s \in ((\mathbf{O}_K / p^k \mathbf{O}_K)^*/\{\pm 1\}} B_2 \left( \left\langle \frac{\frac{1}{2}(s+\overline{s}) }{p^k} \right\rangle \right) } w^{-i} \in \mathbb{Q}[H] $$ such that, under the identification of the group $ H $ with the set of cusps of $ X^+_{ns}(p^k) $, the ideal $ \mathbb{Z}[H]\theta \cap \mathbb{Z}[H] $ represents the group of divisors of units of $ X^+_{ns}(p^k)$. The Cuspidal Divisor Class Group on $ X^+_{ns}(p^k) $ is a module over $\mathbb{Z}[H]$ and, more precisely, we have: } 
$$ \mathfrak{C}^+_{ns}(p^k) \cong \mathbb{Z}_0[H] / (\mathbb{Z}[H]\theta \cap \mathbb{Z}[H]).
  $$
        
From the previous statement we will show another result which has a counterpart in cyclotomic field theory. \\  

\noindent
\textbf{Theorem \ref{Cardinalità}} \textit{For any character $ \chi $ of    $C_{ns}(p^k)/\{\pm I\}$  (identified  with an even character of $ C_{ns}(p^k) $), we let:}
$$ B_{2,\chi} =  \sum_{\alpha \in  C_{ns}(p^k)/\{\pm I\}  } B_2 \left( \left\langle \frac{T(\alpha)}{p^k} \right\rangle \right) \chi(\alpha)  $$ 
\textit{where $ B_2(t) = t^2 - t + \frac{1}{6} $ is the second Bernoulli polynomial and $T$ is a certain $ (\mathbb{Z}/p^k\mathbb{Z})$-linear map. Then we have:}
 $$ |\mathfrak{C}^+_{ns}(p^k)| = 
\displaystyle 24\frac{ \displaystyle\prod_{}{\frac{p^k}{2}B_{2,\chi}}}{\gcd(12,p+1)(p-1)p^{k-1}}    $$  
\textit{where the product runs over all nontrivial characters $ \chi $ of $ C_{ns}(p^k)/{\pm I} $ such that $ \chi(M)=1 $ for every $ M \in C_{ns}(p^k) $
 with $ \det M = \pm 1 $.\\
In particular, for $ k=1 $ let $ \omega$ be a generator of the character group of $ C_{ns}(p) $ and $ v $ a generator of $ \mathbb{F}_{p^2}^* $. Then:}
$$ |\mathfrak{C}^+_{ns}(p)| = \displaystyle \frac{24}{(p-1)\gcd(12,p+1)}\prod_{j=1}^{\frac{p-3}{2}}\frac{p}{2}B_{2,\omega^{(2p+2)j}} = $$  
  $$ = \displaystyle\frac{ 576 \left| \det\left[\displaystyle\frac{p}{2}\left(\displaystyle\sum_{l=0}^{p}B_2\left(  \left\langle \frac{\frac{1}{2}\mbox{Tr}(v^{i-j+l\frac{p-1}{2}}) }{p} \right\rangle \right) - \frac{p+1}{6} \right) \right]_{1\le i,j \le \frac{p-1}{2}} \right|}{(p-1)^2 p (p+1)\gcd(12,p+1)}. $$    
 
This theorem could be considered analogous to the relative class number formula \cite[Theorem 4.17]{Washington}:
$$ h^-_m = Q w \prod_{\chi \mbox{ odd}} -\frac{1}{2}B_{1,\chi} $$
where $ Q=1 $ if $ m $ is a prime power and $ Q=2 $ otherwise, $ w $ is the number of roots of unity in $ \mathbb{Q}(\zeta_m) $ and we encounter the classical generalized Bernoulli numbers:
$$ B_{1,\chi} := \sum_{a=1}^{m}\chi(a)B_1\left( \frac{a}{m} \right)= \frac{1}{m} \sum_{a=1}^{m} \chi(a)a  \mbox{  for } \chi\not=1.$$

In the last section we will explicitly calculate $ |\mathfrak{C}^+_{ns}(p)| $  for some $ p \le 101 $. Consider the isogeny (cfr.\cite[Paragraph 6.6]{Diamond:mf}):$$ {J_0^+}^{new}(p^2) \longrightarrow \mathop{\bigoplus_{f}} A'_{p,f}  $$   
where the sum is taken over the equivalence classes of newforms $ f\in  S_2(\Gamma^+_0(p^2))$. From Theorems \ref{x1}, \ref{x2} and \ref{x3} we deduce that: 
 $$ 
|\mathfrak{C}^+_{ns}(p)| \mbox{ divides } \prod_f \mathop \textbf{\mbox{ gcd }}_{ \begin{scriptsize} \begin{array}{c} q \mbox{ prime}, q \nmid |\mathfrak{C}^+_{ns}(p)|,   \\  q \equiv \pm 1 \mbox{ mod }p \end{array} \end{scriptsize} } |A'_{p,f}(\mathbb{F}_{q})|.  $$  
Using the modular form database of W.Stein, we will find out that
for $ p \le 31 $:  
$$|\mathfrak{C}^+_{ns}(p)| = \prod_f \mathop
\textbf{\mbox{ gcd }}_{ \begin{scriptsize} \begin{array}{c} q < 500 \mbox{ prime},   \\  q \equiv \pm 1 \mbox{ mod }p \end{array} \end{scriptsize} } |A'_{p,f}(\mathbb{F}_{q})|.  $$

\section{Galois groups of modular function fields}
Following \cite[Chapter 1]{Silverman}, let
$  \mathbb{H} = \{x + iy \;| y > 0; x, y \in \mathbb{R} \} $ 
be the upper-half plane and \textit{n} a positive integer. 
The principal congruence subgroup of level \textit{n} 
is the subgroup of  
$ SL_2(\mathbb{Z}) $
 defined as follows: 
\begin{center}
$ \Gamma(n) := \left\{\begin{pmatrix} a&b\\c&d \end{pmatrix} \in SL_2({\mathbb{Z}}) : a\equiv d\equiv 1,~b\equiv c\equiv 0\mod n\right\}. $
\end{center}  
Then the quotient space 
$ \Gamma(n) \textbackslash \mathbb{H} $
is complex analytically isomorphic to an affine curve 
$ Y(n) $
that can be compactified by considering
$ \mathbb{H}^*:= \mathbb{H} \cup \mathbb{Q} \cup \{\infty\}  $
and by taking the extended quotient:
\begin{center}
$X(n)= \Gamma(n)\textbackslash \mathbb{H}^* = Y(n)\cup\Gamma(n)\textbackslash (\mathbb{Q}\cup\{\infty\}).$
\end{center}
The points $ \Gamma(n) \tau $ in $ \Gamma(n)\textbackslash (\mathbb{Q}\cup\{\infty\}) $ are called the cusps of $ \Gamma(n) $
and can be described by the fractions \textit{s=$\frac{a}{c}$} with $0 \leq a \leq n-1 $, $0 \leq c \leq n-1 $ and gcd(\textit{a,c})=1.
As a consequence, it is not difficult to infer that $ X(n) $ has 
$\displaystyle \frac{1}{2} n^2 \displaystyle\prod_{p | n} \Big(1- \frac{1}{p^2}\Big) $
cusps.
\\
Let $F_{n,\mathbb{C}}$ the field of modular functions of level $n$. A classical result states that $F_{1,\mathbb{C}} = \mathbb{C}(j)$ where \textit{j} is the Klein's \textit{j}-invariant.
We shall now find generators for $F_{n,\mathbb{C}}$. Consider:
\begin{center}
$ f_0(w;\tau) = -2^7 3^5 \displaystyle\frac{g_2(\tau)g_3(\tau)}{\Delta(\tau)}\wp(w;\tau,1),$
\end{center} 
where $ \Delta$ is the modular discriminant, $ \wp$ is the Weierstrass elliptic function, $ \tau \in \mathbb{H} $, $w \in \mathbb{C}$ and $ g_2 = 60G_4 $ and $ g_3 = 140G_6 $ are constant multiples of the Eisenstein series: 
  $$      G_{2k}(\tau) = \sum_{{ (m,n)\in\mathbf{Z}^2\backslash(0,0)}} \frac{1}{(m+n\tau )^{2k}}.
  $$   
For $ r,s \in \mathbb{Z} $ and not both divisible by \textit{n} we define $ f_{r,s}= f_{0}(\frac{r\tau +s}{n}; \tau)  $. 
Whereas the Weierestrass   $ \wp$-function is elliptic with respect to the lattice $[\tau,1]$ it follows that $ f_{r,s} $ depends only on the residue of $r,s$ mod $n$. Thus, it is convenient to use a notation emphasizing this property. 
If $ a=(a_1,a_2) \in \mathbb{Q}^2$ but  $a \not\in  \mathbb{Z}^2$ we call the functions $ f_a(\tau)=f_0(a_1\tau+a_2;\tau)$   the Fricke functions. They depend only on the residue class of $a $ mod $ \mathbb{Z}^2 $. 

\begin{thm}
We have:
$$  \mbox{ Gal}( F_{n,\mathbb{C}}, F_{1,\mathbb{C}})\cong SL_2(\mathbb{Z}/n\mathbb{Z})/ \{\pm I \}.  $$
\end{thm}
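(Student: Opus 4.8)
The plan is to deduce the statement from Artin's lemma. I will exhibit an action of $G:=SL_2(\mathbb{Z}/n\mathbb{Z})/\{\pm I\}$ on $F_{n,\mathbb{C}}$ by automorphisms fixing $F_{1,\mathbb{C}}=\mathbb{C}(j)$, show that this action is faithful and that its fixed field is exactly $F_{1,\mathbb{C}}$, and then Artin's lemma will give that $F_{n,\mathbb{C}}/F_{1,\mathbb{C}}$ is finite Galois with group $G$. To build the action, for $\gamma\in SL_2(\mathbb{Z})$ and $h\in F_{n,\mathbb{C}}$ put $\sigma_\gamma(h):=h\circ\gamma$. Since $\Gamma(n)$ is normal in $SL_2(\mathbb{Z})$, the function $h\circ\gamma$ is again $\Gamma(n)$-invariant and meromorphic at the cusps, so $\sigma_\gamma\in\mathrm{Aut}(F_{n,\mathbb{C}})$; it fixes $\mathbb{C}(j)$ because $j$ is $SL_2(\mathbb{Z})$-invariant; $\sigma_\gamma$ depends only on the image of $\gamma$ in $SL_2(\mathbb{Z}/n\mathbb{Z})$ (using the $\Gamma(n)$-invariance of $h$); and $\sigma_{-I}=\mathrm{id}$ because $-I$ acts trivially on $\mathbb{H}$. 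Hence the $\sigma_\gamma$ form a group $\mathcal{G}$ of $F_{1,\mathbb{C}}$-automorphisms of $F_{n,\mathbb{C}}$ which is a quotient of $G$.

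For faithfulness I would use the Fricke functions $f_a$, $a\in\frac1n\mathbb{Z}^2\setminus\mathbb{Z}^2$, which all lie in $F_{n,\mathbb{C}}$, together with the transformation law $f_a\circ\gamma=f_{a\gamma}$ (viewing $a$ as a row vector, $a\gamma$ the matrix product taken mod $\mathbb{Z}^2$); this follows at once from $f_a(\tau)=f_0(a_1\tau+a_2;\tau)$ and the invariance of the lattice $[\tau,1]$ under $SL_2(\mathbb{Z})$. Since $\wp$ is even and $\wp(z;\tau,1)=\wp(w;\tau,1)$ exactly when $z\equiv\pm w\pmod{[\tau,1]}$, comparing both sides as functions of $\tau$ shows $f_a=f_b$ if and only if $a\equiv\pm b\pmod{\mathbb{Z}^2}$. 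Thus if $\sigma_\gamma=\mathrm{id}$ then $f_{a\gamma}=f_a$, so $a\gamma\equiv\pm a\pmod{\mathbb{Z}^2}$ for every $a$; applying this to $a=(1/n,0)$ and $a=(0,1/n)$ forces $\gamma\bmod n$ to be diagonal with entries $\pm1$, and the relation $\det\gamma=1$ then gives $\gamma\equiv\pm I\pmod n$. Hence $\mathcal{G}\cong G$. For the fixed field: if $g\in F_{n,\mathbb{C}}$ is fixed by every $\sigma_\gamma$, then $g\circ\gamma=g$ for all $\gamma\in SL_2(\mathbb{Z})$, so $g$ is an $SL_2(\mathbb{Z})$-invariant modular function, meromorphic at the cusps, hence descends to a meromorphic function on $X(1)\cong\mathbb{P}^1$ and therefore lies in $\mathbb{C}(j)=F_{1,\mathbb{C}}$; together with the obvious inclusion $F_{1,\mathbb{C}}\subseteq(F_{n,\mathbb{C}})^{\mathcal{G}}$ this shows the fixed field is precisely $F_{1,\mathbb{C}}$. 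Artin's lemma now yields $\mathrm{Gal}(F_{n,\mathbb{C}},F_{1,\mathbb{C}})\cong G$.

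The only non-formal ingredients are the two analytic facts invoked above: the criterion $f_a=f_b\iff a\equiv\pm b\pmod{\mathbb{Z}^2}$ (equivalently, that the Fricke functions are the distinct $x$-coordinates of the $n$-torsion of the universal elliptic curve) and the classical statement $F_{1,\mathbb{C}}=\mathbb{C}(j)$ already quoted in the text; I expect the separation criterion for the $f_a$, proved via the periodicity and evenness of $\wp$ or via $q$-expansions, to be the one step requiring genuine care. As an alternative route one can bypass the function-field bookkeeping: for $n\ge 3$ the natural map $X(n)\to X(1)$ is a Galois branched covering of compact Riemann surfaces with deck group $SL_2(\mathbb{Z})/\langle\Gamma(n),-I\rangle\cong G$ (the cases $n\le 2$ being immediate), and the anti-equivalence between finite coverings of a Riemann surface and finite extensions of its function field gives the isomorphism directly, at the cost of verifying normality of the covering and computing its degree as the index $[PSL_2(\mathbb{Z}):\overline{\Gamma(n)}]$.
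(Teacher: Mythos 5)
Your proof is correct and follows essentially the same route as the paper's: the action of $SL_2(\mathbb{Z})$ on $F_{n,\mathbb{C}}$ by composition, with kernel $\pm\Gamma(n)$ detected on the Fricke functions via $f_a\circ\gamma=f_{a\gamma}$. The only difference is that where the paper cites the surjectivity of $SL_2(\mathbb{Z})\to\mathrm{Aut}(\mathbb{C}(X(n)))$ from the literature, you derive it yourself by computing the fixed field and invoking Artin's lemma, which makes the argument self-contained.
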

\begin{proof}
There is a surjective homeomorphism (see \cite[pag.279]{Diamond:mf} and \cite[pag.65]{Lang:ef}):
\begin{center}
$ \theta: SL_2(\mathbb{Z}) \longrightarrow $ Aut $(\mathbb{C}(X(n))), $
\end{center} 
\begin{center} 
$ \gamma \longmapsto   $
$ (f \longmapsto f^{(\theta(\gamma))} = f \circ \gamma). $
\end{center}
From Ker$(\theta) = \pm \Gamma(n) $ and the relations $ f_a (\gamma(\tau)) = f_{a\gamma}(\tau) $ it follows easily that Gal$( F_{n,\mathbb{C}}, F_{1,\mathbb{C}})\cong \Gamma(1)/\pm \Gamma(n)  \cong SL_2(\mathbb{Z}/n\mathbb{Z})/ \{\pm I \} $.
\end{proof}
We say that a modular form in $ F_{n,\mathbb{C}} $ is defined over a field if all the coefficients of its $q$-expantion lie in that field and analogously for every $\mbox{Gal}( F_{n,\mathbb{C}}, F_{1,\mathbb{C}})$-conjugate of the form. Let:
\begin{center}
\begin{flushleft}
$ F_n= $ 
function field on $ X(n) $ consisting of those functions which are defined over the \textit{n}-th cyclotomic field 
$ \mathbb{Q}_n = \mathbb{Q}(\zeta_n) $.
\end{flushleft}
\end{center} 

\begin{thm}\label{fricke}
The field $ F_n $ has the following properties:\\
(1) $ F_n $ is a Galois extension of $ F_1 = \mathbb{Q}(j) $.\\
(2) $ F_n = \mathbb{Q}(j,f_{r,s}) 
 _{all (r,s)\in \frac{1}{n}\mathbb{Z}^2 \smallsetminus \mathbb{Z}^2 }              $. \\
(3) For every $ \gamma \in GL_2(\mathbb{Z}/n\mathbb{Z})$ the map   $ f_a \mapsto f_{a\gamma}$     gives an element of Gal$(F_n,\mathbb{Q}(j))$ which we write   $\theta(\gamma)  $. Then $ \gamma \mapsto \theta(\gamma) $ induces an isomorphism of $  GL_2(\mathbb{Z}/n\mathbb{Z})/{\pm I} $  to Gal$(F_n,\mathbb{Q}(j))$. The subgroup $ SL_2(\mathbb{Z}/n\mathbb{Z})/{\pm I} $ operates on a modular function by composition with the natural action of $ SL_2(\mathbb{Z})$ on the upper half-plane $\mathbb{H} $. \\
Furthermore the group of matrices $ \begin{pmatrix} 1&0\\0&d \end{pmatrix}   $ operates on $ F_n $ as follows: \\
for $d \in (\mathbb{Z}/n\mathbb{Z})^*  $ consider the automorphism $ \sigma_d $ of $ \mathbb{Q}_n $ such that $ \sigma_d(\zeta_n)= \zeta_n^d $. Then $ \sigma_d $ extends to $ F_n $ by operating on the coefficients of the power series expansions:
\begin{center}
$ \sigma_d (\sum{a_i q^{i/n}})=  \sum{\sigma_d(a_i) q^{i/n}} $ with $ q=e^{2\pi i \tau}$.  \\
\end{center} 
If $ (r,s) \in \frac{1}{n}\mathbb{Z}^2 \setminus \mathbb{Z}^2 $, we have: $\sigma_d(f_{r,s}(\tau))=f_{r,sd}(\tau).  $
 
\end{thm}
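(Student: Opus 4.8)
The plan is to deduce everything from the complex statement already recorded — namely $\mathrm{Gal}(F_{n,\mathbb{C}}/F_{1,\mathbb{C}})\cong SL_2(\mathbb{Z}/n\mathbb{Z})/\{\pm I\}$ with $\gamma$ acting by $f_a\mapsto f_{a\gamma}$, together with the fact that the Fricke functions generate $F_{n,\mathbb{C}}$ over $\mathbb{C}(j)$ — by keeping track of Fourier coefficients. The first step is to compute the $q$-expansion of $f_{r,s}$: substituting $z=r\tau+s$ into the product/Fourier expansion of $\wp(z;\tau,1)$ and multiplying by the expansion of $-2^7 3^5\,g_2g_3/\Delta$, which has rational coefficients and a simple pole at $\infty$, shows that $f_{r,s}$ has a $q^{1/n}$-expansion with coefficients in $\mathbb{Q}_n=\mathbb{Q}(\zeta_n)$, and the same holds for each $SL_2$-conjugate $f_{(r,s)\gamma}$. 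Hence every $f_{r,s}$ lies in $F_n$, the subfield $K:=\mathbb{Q}(j,\,f_{r,s}:(r,s)\in\frac{1}{n}\mathbb{Z}^2\setminus\mathbb{Z}^2)$ is contained in $F_n$, it is stable under $\mathrm{Gal}(F_{n,\mathbb{C}}/\mathbb{C}(j))$ (which permutes the $f_a$), and $K\cdot\mathbb{C}=F_{n,\mathbb{C}}$, so also $F_n\cdot\mathbb{C}=F_{n,\mathbb{C}}$.

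The heart of the argument is a degree count, for which one must pin down the exact constant field $C$ of $F_n$. By definition $C\subseteq\mathbb{Q}_n$; for the reverse inclusion I would exhibit a modular function in $K$ whose Fourier coefficients generate $\mathbb{Q}_n$ over $\mathbb{Q}$ — concretely, in the $q^{1/n}$-expansion of an $f_{r,s}$ with $r\not\equiv0$ a primitive $n$-th root of unity appears, which one recovers as a value of an element of $K$; equivalently, the constant value $e_n(\tau/n,1/n)=\zeta_n$ of the Weil pairing on the universal elliptic curve is a rational expression in $j$ and the Fricke functions. This gives $C=\mathbb{Q}_n$. Since in characteristic zero $F_n$ is then a regular extension of $\mathbb{Q}_n$, one has $F_n\otimes_{\mathbb{Q}_n}\mathbb{C}=F_{n,\mathbb{C}}$ and therefore $[F_n:\mathbb{Q}_n(j)]=[F_{n,\mathbb{C}}:\mathbb{C}(j)]=N_0$, where $N_0:=|SL_2(\mathbb{Z}/n\mathbb{Z})/\{\pm I\}|$, whence $[F_n:\mathbb{Q}(j)]=N_0\,\phi(n)$. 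Running the same computation for $K$ — which now contains $\mathbb{Q}_n$, hence $\mathbb{Q}_n(j,f_a:a)$ — gives $[K:\mathbb{Q}(j)]=N_0\,\phi(n)=[F_n:\mathbb{Q}(j)]$, so $K=F_n$; this is (2), and (1) will fall out at the end of the count.

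For (3) I would assemble the map $\theta$ from two pieces. For $\gamma\in SL_2(\mathbb{Z}/n\mathbb{Z})$ let $\theta(\gamma)$ be the automorphism of $F_{n,\mathbb{C}}$ coming from the action of $SL_2(\mathbb{Z})$ on $\mathbb{H}$; it preserves $F_n=K$ and acts there over $\mathbb{Q}(j)$ by $f_a\mapsto f_{a\gamma}$. For $d\in(\mathbb{Z}/n\mathbb{Z})^*$ let $\sigma_d$ act on $F_n$ by applying $\sigma_d\in\mathrm{Gal}(\mathbb{Q}_n/\mathbb{Q})$ to Fourier coefficients; that this is a field automorphism of $F_n$ fixing $F_1$ uses that $j$ has integral $q$-expansion together with the $\mathbb{Q}_n$-rational model of $X(n)$ (the $q$-expansion principle). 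Comparing $q^{1/n}$-expansions then shows that $\sigma_d$ replaces the $\zeta_n^{s'}$ occurring in the expansion of $f_{r,s}$ by $\zeta_n^{s'd}$, i.e. $\sigma_d(f_{r,s})=f_{r,sd}$, which is the last assertion of the theorem. Writing an arbitrary $\gamma\in GL_2(\mathbb{Z}/n\mathbb{Z})$ as $\gamma_0\cdot\mathrm{diag}(1,\det\gamma)$ with $\gamma_0\in SL_2(\mathbb{Z}/n\mathbb{Z})$, I set $\theta(\gamma):=\theta(\gamma_0)\circ\sigma_{\det\gamma}$ and verify on the generators $j,f_a$ that $\theta$ is a homomorphism sending $f_a\mapsto f_{a\gamma}$; since $f_a=f_{-a}$ it factors through $GL_2(\mathbb{Z}/n\mathbb{Z})/\{\pm I\}$, and it is injective because a kernel element must fix $\mathbb{Q}_n$ (so have determinant $1$) and then act trivially on $F_{n,\mathbb{C}}$, forcing $\gamma\equiv\pm I$. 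Thus $\theta$ embeds a group of order $N_0\,\phi(n)=|GL_2(\mathbb{Z}/n\mathbb{Z})/\{\pm I\}|$ into $\mathrm{Aut}(F_n/\mathbb{Q}(j))$, whose order is at most $[F_n:\mathbb{Q}(j)]=N_0\,\phi(n)$; hence equality holds throughout, $F_n/\mathbb{Q}(j)$ is Galois (this is (1)), and $\theta$ is the asserted isomorphism.

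The step I expect to be the main obstacle is the constant-field computation — showing $\zeta_n$ itself, and not merely $\zeta_n+\zeta_n^{-1}$ (all one gets from the naive candidates $j(n\tau)$, $\Delta(\tau)/\Delta(n\tau)$, or $f_{0,s}$), lies in the field generated by $j$ and the Fricke functions; I would settle it by the Weil-pairing argument or by a careful reading of the $q^{1/n}$-expansion of a Fricke function with nonzero first coordinate. The supporting $q$-expansion computations and the $\mathbb{Q}_n$-rational structure of $X(n)$ needed to make the $\sigma_d$ genuine automorphisms are classical; see \cite{Lang:ef} and \cite{Silverman}. A secondary, purely bookkeeping point is tracking the quotient by $\{\pm I\}$ everywhere, which (via $f_a=f_{-a}$) is what makes the action descend from $GL_2(\mathbb{Z}/n\mathbb{Z})$ to $GL_2(\mathbb{Z}/n\mathbb{Z})/\{\pm I\}$.
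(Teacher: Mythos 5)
The paper does not actually prove this theorem; it simply cites \cite[Theorem 6.6]{Shimura:af}. Your sketch reproduces, correctly in outline, the classical argument from that reference --- $q^{1/n}$-expansions of the Fricke functions, the Weil-pairing computation showing that $\mathbb{Q}(\zeta_n)$ is the exact constant field (you rightly flag as the delicate point that $\zeta_n$ itself, and not merely $\zeta_n+\zeta_n^{-1}$, must lie in $\mathbb{Q}(j,f_a)$), and the degree count against $|GL_2(\mathbb{Z}/n\mathbb{Z})/\{\pm I\}|$ --- so your proposal is essentially the same proof the paper is pointing to.
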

\begin{proof}

 \cite[Theorem 6.6]{Shimura:af}
 \end{proof}
 
\section{Modular Units and Manin-Drinfeld Theorem} 
In this paper we will focus our attention on the modular units of $ X(n) $. In other words, the invertible elements of the integral closure of $ \mathbb{Q}[j] $ in $ F_n $. The only pole of \textit{j}$ (\tau) $ is at infinity. So, from the algebraic characterization of the integral closure as the intersection of all valuation subrings containing the given ring, the modular units in $ F_n $ are exactly the modular functions which have poles and zeros exclusively at the cusps of $ X(n) $.

Let $ \mathfrak{D}_n \simeq \bigoplus_{\footnotesize{\mbox{cusps}}} \mathbb{Z} $ be the free abelian group of rank $ {\frac{1}{2} n^2 \prod_{p | n} (1- \frac{1}{p^2})} $ generated by the cusps of $ X(n) $. Let  $ \mathfrak{D}_{n,0} $ be its subgroup consisting of elements of degree $ 0 $ and let $ \mathfrak{F}_n $ be the subgroup generated by the divisors of modular units in the modular function field $ F_n $. The quotient group:   
\begin{center}
$  \mathfrak{C}_n := \mathfrak{D}_{n,0} / \mathfrak{F}_n $
\end{center}
is called the Cuspidal Divisor Class Group on $ X(n) $. The previous definition generalizes \textit{mutatis mutandis} to every modular curve $ X_\Gamma $ where $ \Gamma $ is a modular subgroup. Manin and Drinfeld proved that:
\begin{thm}\label{mandrinf}
 If $ \Gamma $ is a congruence subgroup then all divisors of degree 0 whose support is a subset of the set of cusps of $ X_\Gamma $ have a multiple that is a principal divisor. In other word if $ x_1,x_2 \in X_\Gamma $ are cusps, then $ x_1 - x_2 $ has finite order in the jacobian variety $ Jac(X_\Gamma) $.   
\end{thm}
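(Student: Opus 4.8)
The plan is to exploit the action of the Hecke correspondences together with a separation-of-eigenvalues argument, which is essentially the original idea of Manin and Drinfeld. Since every congruence subgroup contains some $\Gamma(N)$ and the projection $X(N)\to X_\Gamma$ is a finite morphism carrying cusps to cusps, one may pull a degree-$0$ cuspidal divisor up to $X(N)$ and push it back down (which multiplies it by the degree of the covering), so it is enough to prove the statement for $X(N)$; moreover it suffices to show that $(c_1)-(c_2)$ has finite order in $\mathrm{Jac}(X(N))$ for any two cusps $c_1,c_2$, since these classes generate the group $\mathfrak{D}_{N,0}$ of cuspidal divisors of degree $0$. So I would set $C\subseteq\mathrm{Jac}(X(N))$ to be the subgroup generated by the classes of the divisors $(c_1)-(c_2)$; there are only finitely many cusps, so $C$ is finitely generated, and the claim becomes $C\otimes\mathbb{Q}=0$. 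Note that $C\cong\mathfrak{D}_{N,0}/\mathfrak{F}_N=\mathfrak{C}_N$, the cuspidal divisor class group of $X(N)$ itself.

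Next I would bring in the Hecke operators $T_p$ for primes $p\nmid N$, viewed as correspondences on $X(N)$ and hence as endomorphisms of $\mathrm{Jac}(X(N))$. Because $T_p$ sends a cusp to an effective sum of cusps of total degree $p+1$, it preserves $\mathfrak{D}_{N,0}$ and therefore preserves $C$, and on $\mathfrak{D}_{N,0}\otimes\mathbb{Q}$ it satisfies a polynomial $P_p(X)\in\mathbb{Z}[X]$ whose roots are of Eisenstein type: pairing cuspidal divisors with weight-$2$ Eisenstein series through the residues of logarithmic differentials identifies the eigenvalues of $T_p$ on $\mathfrak{D}_{N,0}\otimes\mathbb{Q}$ with the numbers $\psi(p)+p\,\varphi(p)$ for Dirichlet characters $\psi,\varphi$ modulo $N$, each of absolute value at least $p-1$. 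On the other hand, by Eichler--Shimura the endomorphism $T_p$ of $\mathrm{Jac}(X(N))$ satisfies its characteristic polynomial on $H_1(X(N),\mathbb{Z})$, whose roots are the Hecke eigenvalues $a_p(f)$ of weight-$2$ cusp forms $f$ together with their conjugates, and by the Hasse--Weil (Ramanujan) bound $|a_p(f)|\le 2\sqrt p$. Since $C\otimes\mathbb{Q}$ is a $T_p$-stable quotient of $\mathfrak{D}_{N,0}\otimes\mathbb{Q}$, the operator $T_p$ on it is annihilated by $P_p(X)$; since moreover $C\subseteq\mathrm{Jac}(X(N))$ and $T_p$ satisfies that characteristic polynomial as an endomorphism of the abelian variety $\mathrm{Jac}(X(N))$, every eigenvalue of $T_p$ on $C\otimes\mathbb{Q}$ is one of the $a_p(f)$.

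For any prime $p\ge 5$ with $p\nmid N$ one has $2\sqrt p<p-1$, so the two requirements on $T_p|_{C\otimes\mathbb{Q}}$ are incompatible unless $C\otimes\mathbb{Q}=0$; choosing such a $p$ therefore forces $C$ to be finite, which is the assertion for $X(N)$ and, after the reduction above, for an arbitrary congruence subgroup $\Gamma$. I expect the technical heart to be the middle step: making the $T_p$-action on the cusps of $X(N)$ --- parametrised as in Section~2 --- completely explicit and matching its characteristic polynomial with that of $T_p$ on weight-$2$ Eisenstein series, so as to identify the Eisenstein-type eigenvalues $\psi(p)+p\,\varphi(p)$; once this is done the separation against the Hasse bound is immediate. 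An alternative would be to realise each $(c_1)-(c_2)$ directly as the divisor of a normalised differential of the third kind and to study the rationality of its periods, but the Hecke-operator route is the cleanest in the present setting.
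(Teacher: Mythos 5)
Your proposal is correct in outline, but it takes a visibly different route from what the paper records. The paper does not really prove the theorem: it reformulates the statement as the assertion that the functional $\{x_1,x_2\}\colon \omega\mapsto\int_{x_1}^{x_2}\omega$, which a priori defines a class in $H_1(X_\Gamma,\mathbb{R})$, actually lies in $H_1(X_\Gamma,\mathbb{Q})$, and then defers entirely to Manin, Drinfeld and Lang for that rationality-of-periods statement. You instead argue directly on the divisor class group via Hecke eigenvalue separation: reduce to $X(N)$ by $\pi_*\pi^*=\deg\pi$, let $T_p$ ($p\nmid N$, $p\ge 5$) act on the cuspidal subgroup $C$, and observe that the polynomial $P_p$ killing $T_p$ on $\mathfrak{D}_{N,0}\otimes\mathbb{Q}$ (Eisenstein-type roots of absolute value $\ge p-1$) is coprime to the polynomial killing $T_p$ on the Jacobian (roots $a_p(f)$ with $|a_p(f)|\le 2\sqrt p$ by Eichler--Shimura and the Hasse bound), which forces $C\otimes\mathbb{Q}=0$. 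This is essentially the classical Hecke-theoretic proof underlying the references the paper cites, so the two approaches are cousins rather than strangers; what yours buys is a self-contained argument at the level of divisor classes, at the cost of the technical step you yourself flag, namely making the $T_p$-action on the cusps of $X(N)$ explicit and computing its characteristic polynomial (in practice one usually does this for $\Gamma_1(\cdot)$ and transfers, since the Hecke correspondences on full level $X(N)$ are awkward to normalize). Your reduction step and the coprimality argument are sound; I would only tighten the phrase ``every eigenvalue of $T_p$ on $C\otimes\mathbb{Q}$ is one of the $a_p(f)$'' into the cleaner statement that $C\otimes\mathbb{Q}$ is annihilated by two coprime polynomials and hence vanishes.
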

\begin{proof}
Let $ x_1,x_2$ two cusps in $ X_\Gamma $. Denote by $ \{x_1,x_2 \} \in (\Omega^1(X_\Gamma))^*$  the functional on the space of differential of the first kind given by:
$$  \{x_1, x_2 \}: \omega \mapsto \int_{x_1}^{x_2}\omega. $$  
  
\textit{A priori} we have $ \{x_1,x_2\} \in H_1(X_\Gamma, \mathbb{R}) $. Manin and Drinfeld showed that it lies in  $H_1(X_\Gamma, \mathbb{Q})$. Cf. \cite{Drinfeld}, \cite[Chapter IV]{Lang:mf} and \cite{Manin}.
\end{proof}

\section{Siegel Functions and Cuspidal Divisor Class Groups}

Let $ n=p^k $ with $ p \ge 5 $ prime. Following \cite{KL} we will give an explicit description of modular units of $ X(n) $ and its cuspidal divisor class group. \\
Let $L$ a lattice in $ \mathbb{C} $. Define the Weierstrass sigma function:
$$ \sigma_L (z) = z \displaystyle\prod_{ \begin{scriptsize} \begin{array}{c} \omega \in L \\  \omega \not=0 \end{array} \end{scriptsize} }{ \left( 1 - \frac{z}{\omega} \right)  e^{z/\omega + \frac{1}{2} (z/\omega)^2  }                 },  $$
\noindent which has simple zeros at all non-zero lattice points. Define:
$$  \zeta_L (z) = \frac{d}{dz} \log (\sigma_L(z)) = \displaystyle{\frac{1}{z} + \displaystyle\sum_{ \begin{scriptsize} \begin{array}{c} \omega \in L \\  \omega \not=0 \end{array} \end{scriptsize} }{\left[\frac{1}{z-\omega} +\frac{1}{\omega} + \frac{z}{\omega^2} \right]} }, $$
$$ \wp_L(z)= -\zeta'_L(z) = \frac{1}{z^2} + 
\displaystyle\sum_{ \begin{scriptsize} \begin{array}{c} \omega \in L \\  \omega \not=0 \end{array} \end{scriptsize} }{\left[\frac{1}{(z-\omega)^2} - \frac{1}{\omega^2} \right]}. 
 $$
\noindent
If $ \omega \in L $, by virtue of the periodicity of $ \wp_L $ we obtain $ \frac{d}{dz} \zeta_L(z+\omega) = \frac{d}{dz}\zeta_L(z)$, whence follows the existence of a $ \mathbb{R}$-linear function $ \eta_L(z) $ such that:
$$ \zeta_L(z+\omega) = \zeta_L(z) + \eta_L(\omega).$$
For $ L=[\tau,1] $ (with $\tau \in \mathbb{H} $) and $ a=(a_1,a_2) \in \mathbb{Q}^2 \smallsetminus \mathbb{Z}^2 $ we define the Klein forms:
$$ \mathfrak{k}_a(\tau) = e^{-\eta_L(a_1\tau+a_2)}\sigma_L(a_1\tau+a_2).$$

Note that $ z=a_1\tau+a_2 \not\in L=[\tau,1] $ so we know directly from their definition that the Klein forms are holomorphic functions which have no zeros and poles on the upper half plane. 

When $ \Gamma $ is a congruence subgroup and $ k $ is an integer, we will say that a holomorphic function $ f(\tau) $ on $ \mathbb{H}$ is a nearly holomorphic modular form for $ \Gamma $ of weight $ k $ if:\\
(i) $ f(\gamma(\tau))= (r\tau +s)^k f(\tau) $ for all  $\gamma=\begin{pmatrix} p&q\\r&s \end{pmatrix}  \in \Gamma $;\\
(ii) $ f(\tau) $ is meromorphic at every cusp. 

 \begin{prop}\label{Kleinforms}
  Let $ a=(a_1,a_2) \in \mathbb{Q}^2 \smallsetminus \mathbb{Z}^2 $ and $ b=(b_1,b_2) \in \mathbb{Z}^2 $. The Klein Forms $ \mathfrak{k}_a(\tau) $ have the following properties: \\
  (1) $ \mathfrak{k}_{-a}(\tau) = - \mathfrak{k}_a(\tau); $\\
  (2) $ \mathfrak{k}_{a+b} = \epsilon(a,b)\mathfrak{k}_a(\tau) $ with $ \epsilon(a,b)=(-1)^{b_1 b_2 + b_1 + b_2} e^{- \pi i (b_1 a_2 - b_2 a_1)}; $\\
  (3) For every  $\gamma=\begin{pmatrix} p&q\\r&s \end{pmatrix}  \in SL_2(\mathbb{Z}) $ we have: 
  $$ \mathfrak{k}_a (\gamma(\tau)) = \mathfrak{k}_a \left(\frac{p\tau+q}{r\tau+s}\right) = \frac{\mathfrak{k}_{a\gamma}(\tau)}{r \tau + s} =
   \frac{\mathfrak{k}_{(a_1 p + a_2 r , a_1 q + a_2 s  )}(\tau)}{r \tau + s};  $$ 
  (4) If $ n \ge 2 $ and $ a \in \frac{1}{n}\mathbb{Z}^2 \setminus \mathbb{Z}^2 $  then $ \mathfrak{k}_a(\tau) $ is a nearly holomorphic modular form for $ \Gamma(2n^2) $ of weight -1. \\
  (5) Let  $ n \ge 3 $ odd and $ \{m_a\}_{a \in \frac{1}{n}\mathbb{Z}^2 \setminus \mathbb{Z}^2} $ a family of integers such that $ m_a \not= 0 $ occurs only for finitely many $a$. Then the product of Klein form:
  $$ \prod_{a \in \frac{1}{n}\mathbb{Z}^2 \setminus \mathbb{Z}^2} \mathfrak{k}_a^{m_a}(\tau)   $$
  is a nearly holomorphic modular form for $ \Gamma(n)$ of weight $ -\sum_a m_a $ if and only if:
  $$ \sum_a {m_a (n a_1)^2 } \equiv  \sum_a {m_a (n a_2)^2 } \equiv  \sum_a {m_a (n a_1)(n a_2) \equiv 0 \mod n }. $$ 
 
    \end{prop}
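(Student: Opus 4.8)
The plan is to obtain the five assertions one by one from the classical transformation laws of the Weierstrass $\sigma$-function, its quasi-period homomorphism $\eta_L$ and the Legendre relation, and then to feed (2)--(4) into the main computation, which is (5). Assertion (1) is immediate, since $\sigma_L$ is odd while the exponential normaliser is even in $z$ (as $\eta_L$ is $\mathbb{R}$-linear). For (2) I would invoke the quasi-periodicity $\sigma_L(z+\omega)=\psi_L(\omega)\,e^{\eta_L(\omega)(z+\omega/2)}\sigma_L(z)$ for $\omega\in L$, where $\psi_L(\omega)=1$ if $\omega\in 2L$ and $\psi_L(\omega)=-1$ otherwise; writing $\omega=b_1\tau+b_2$ and $z=a_1\tau+a_2$, the $\mathbb{R}$-linearity of $\eta_L$ and the Legendre relation $\eta_L(\tau)\cdot 1-\eta_L(1)\cdot\tau=2\pi i$ collapse the correction factor to exactly $\epsilon(a,b)=(-1)^{b_1b_2+b_1+b_2}e^{-\pi i(b_1a_2-b_2a_1)}$, the sign being $\psi_L(b_1\tau+b_2)$. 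For (3) I would note that $[p\tau+q,\,r\tau+s]=[\tau,1]=L$ because $\gamma\in SL_2(\mathbb{Z})$ is a change of basis, so $[\gamma\tau,1]=(r\tau+s)^{-1}L$ and $a_1\gamma\tau+a_2=(r\tau+s)^{-1}\bigl((a\gamma)_1\tau+(a\gamma)_2\bigr)$; the identity then follows from the degree-one homogeneity $\mathfrak{k}(\lambda z;\lambda L)=\lambda\,\mathfrak{k}(z;L)$ taken with $\lambda=(r\tau+s)^{-1}$. In particular every $\prod_a\mathfrak{k}_a^{m_a}$ transforms under $SL_2(\mathbb{Z})$ with automorphy factor $(r\tau+s)^{-\sum_a m_a}$ up to a root of unity coming from (2).

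Assertion (4) is (2) and (3) combined. For $\gamma=I+2n^2M\in\Gamma(2n^2)$ with $M$ integral, and $a=\tfrac1n\alpha$ with $\alpha\in\mathbb{Z}^2$, one has $a\gamma=a+b$ where $b=2n\,\alpha M\in 2n\mathbb{Z}^2$; thus $b$ has even entries and $b_1a_2-b_2a_1\in 2\mathbb{Z}$, so $\epsilon(a,b)=1$ and $\mathfrak{k}_a(\gamma\tau)=(r\tau+s)^{-1}\mathfrak{k}_a(\tau)$. Holomorphy and non-vanishing on $\mathbb{H}$ are part of the definition, and meromorphy at the cusps follows from the classical product $q$-expansion of $\mathfrak{k}_a$ at $\infty$ carried to the remaining cusps by (3).

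For (5) the weight is read off from (3). For $\gamma=\begin{pmatrix}p&q\\r&s\end{pmatrix}\in\Gamma(n)$, writing $\gamma=I+nN$ with $N$ integral and $a=\tfrac1n\alpha$, one has $a\gamma=a+\alpha N$ with $\alpha N\in\mathbb{Z}^2$, so by (3) and (2), putting $g:=\prod_a\mathfrak{k}_a^{m_a}$,
$$ g(\gamma\tau)=(r\tau+s)^{-\sum_a m_a}\,\chi(\gamma)\,g(\tau),\qquad \chi(\gamma):=\prod_a\epsilon(a,\alpha N)^{m_a}. $$
Hence $g$ is a nearly holomorphic modular form for $\Gamma(n)$ of weight $-\sum_a m_a$ iff $\chi(\gamma)=1$ for all $\gamma\in\Gamma(n)$. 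Expanding $\epsilon$ explicitly, $\chi(\gamma)$ equals $\exp\bigl(-\tfrac{\pi i}{n}[\,N_{21}\sum_a m_a(na_2)^2-N_{12}\sum_a m_a(na_1)^2+(N_{11}-N_{22})\sum_a m_a(na_1)(na_2)\,]\bigr)$ times the sign $(-1)^{\sum_a m_a(b_1b_2+b_1+b_2)}$, where $(b_1,b_2)=\alpha N$. If the three congruences $\sum_a m_a(na_1)^2\equiv\sum_a m_a(na_2)^2\equiv\sum_a m_a(na_1)(na_2)\equiv 0\pmod n$ hold, the exponential factor reduces to a sign, and combining the two signs and using that $n$ is odd together with $\det\gamma=1$ (which forces $\mathrm{tr}\,N\equiv\det N\pmod 2$, hence rules out $N_{11}$ odd with $N_{12}$ even and $N_{22}$ odd with $N_{21}$ even) one checks the signs cancel, so $\chi(\gamma)=1$. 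Conversely, substituting the elementary matrices $\begin{pmatrix}1&n\\0&1\end{pmatrix}$, $\begin{pmatrix}1&0\\n&1\end{pmatrix}$, $\begin{pmatrix}1-n&n\\-n&1+n\end{pmatrix}$ into $\chi$ shows that $\chi\equiv 1$ on $\Gamma(n)$ forces exactly those three congruences.

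I expect the bookkeeping in (5) to be the main obstacle: one must keep track of the half-integer exponents introduced by $\psi_L$ and by the Legendre relation and confirm that, for $n$ odd, the identity $\mathrm{tr}\,N\equiv\det N\pmod 2$ coming from $\det\gamma=1$ makes the combined root of unity equal $1$ — not merely $\pm1$ — precisely under the three quadratic congruences. Assertions (1)--(4) are by comparison routine transcriptions of standard facts about $\sigma_L$, $\eta_L$ and the Legendre relation.
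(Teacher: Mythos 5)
Your argument is correct, but note that the paper does not actually prove this proposition: its ``proof'' consists of the remark that (2) reformulates the Legendre relation together with citations to Kubert--Lang (Chapters 2--3) and Lang's \emph{Elliptic Functions} (Chapter 19). What you have written is essentially a reconstruction of the classical Kubert--Lang argument: (1)--(3) from the oddness and quasi-periodicity of $\sigma_L$, the Legendre relation and degree-one homogeneity, (4) by specializing (2)--(3) to $\gamma\equiv I\bmod 2n^2$, and (5) by analyzing the character $\chi(\gamma)=\prod_a\epsilon(a,\alpha N)^{m_a}$ on $\Gamma(n)$. Your treatment of the delicate sign in (5) checks out: writing $\gamma=I+nN$, the condition $\det\gamma=1$ gives $\operatorname{tr}N=-n\det N$, hence for $n$ odd $N_{11}+N_{22}\equiv N_{11}N_{22}+N_{12}N_{21}\pmod 2$, which simultaneously kills the coefficient of $\sum_a m_a(na_1)(na_2)$ in the residual sign and forces $N_{12}$ (resp.\ $N_{21}$) to be odd whenever $N_{11}$ (resp.\ $N_{22}$) is, so that $N_{11}(N_{12}+1)$ and $N_{22}(N_{21}+1)$ are even; together with $x\equiv x^2\pmod 2$ this makes the combined root of unity exactly $1$, and your three test matrices do recover the three congruences for the converse. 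Your direct verification on an arbitrary element of $\Gamma(n)$ is if anything cleaner than reducing to generators modulo $\Gamma(2n^2)$. One caveat worth flagging: the paper's displayed definition $\mathfrak{k}_a(\tau)=e^{-\eta_L(a_1\tau+a_2)}\sigma_L(a_1\tau+a_2)$ omits the factor $\tfrac12(a_1\tau+a_2)$ in the exponent; your proof tacitly (and correctly) uses the standard normalization $e^{-\eta_L(z)z/2}\sigma_L(z)$, without which neither (1) nor the formula for $\epsilon(a,b)$ in (2) would hold, so you should state explicitly that you are working with the Kubert--Lang definition rather than the one printed in the paper.
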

\begin{proof}
Property (2) is nothing more than a reformulation of the Legendre relation: $ \eta_{[\tau,1]}(1)\tau - \eta_{[\tau,1]}(\tau) = 2 \pi i $. Property (5) is discussed in \cite[Chapter 3, Paragraph 4]{KL}.\\
For more details see: \cite[Chapters 2 and 3]{KL} or \cite[Chapter 19]{Lang:ef}. 
\end{proof}

We are now ready to define the Siegel function:
$$ g_a(\tau) = \mathfrak{k}_a(\tau)\Delta(\tau)^{1/12},$$

\noindent
where $ \Delta(\tau)$   is the square of the Dedekind eta funtion $ \eta(\tau) $ (not to be mistaken for the aforementioned $\eta_L(\tau) $):

\begin{center}
$ \eta(\tau)^2 = 2 \pi i  q^{1/12}\prod_{k=1}^{\infty}(1-q^n)^2 $ with $ q=e^{2\pi i \tau} .$
\end{center}

\begin{prop}\label{sigfun} 
The set of functions $ \{h_a(\tau)=g_a(\tau)^{12n} \} _{a \in \frac{1}{n}\mathbb{Z}^2 \setminus \mathbb{Z}^2} $ constitute a Fricke family. Just like the Fricke functions $ f_a(\tau) $ of Theorem \ref{fricke} we have: $ h_a(\tau) \in F_n $, for every $ \gamma \in SL_2({\mathbb{Z}}) $ we have $ h_a(\gamma(\tau))=h_{a\gamma}(\tau) $ and in addition if $ \sigma_d \in Gal(\mathbb{Q}_n, \mathbb{Q}) $ then $ \sigma_d (h_{a_1, a_2}(\tau)) = h_{a_1, d a_2}(\tau) $. In other words, the Siegel functions, raised to the appropriate power, are permuted by the elements of the Galois Group Gal$( F_n,\mathbb{Q}(j) )$.

\end{prop}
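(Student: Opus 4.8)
The plan is to work throughout with $h_a(\tau)=\mathfrak{k}_a(\tau)^{12n}\Delta(\tau)^{n}$, which is a well-defined holomorphic function on $\mathbb{H}$ — unlike $g_a(\tau)=\mathfrak{k}_a(\tau)\Delta(\tau)^{1/12}$ itself, where $\Delta^{1/12}$ is ambiguous up to a root of unity, the $12n$th power is canonical. Four things must be checked: (a) $h_a$ depends only on $a\bmod\mathbb{Z}^2$, so the family is well-indexed by $\tfrac1n\mathbb{Z}^2/\mathbb{Z}^2\smallsetminus\{0\}$; (b) $h_a$ is a modular function for $\Gamma(n)$ with zeros and poles only at the cusps, i.e. a modular unit on $X(n)$; (c) $h_a(\gamma(\tau))=h_{a\gamma}(\tau)$ for $\gamma\in SL_2(\mathbb{Z})$; (d) $\sigma_d(h_{a_1,a_2}(\tau))=h_{a_1,da_2}(\tau)$ for $\sigma_d\in\mathrm{Gal}(\mathbb{Q}_n,\mathbb{Q})$. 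Granting (b), items (c), (d) together with Theorem \ref{fricke}(3) — which exhibits $\mathrm{Gal}(F_n,\mathbb{Q}(j))\cong GL_2(\mathbb{Z}/n\mathbb{Z})/\{\pm I\}$ as generated by $SL_2(\mathbb{Z}/n\mathbb{Z})/\{\pm I\}$ and the matrices $\begin{pmatrix}1&0\\0&d\end{pmatrix}$ acting as $\sigma_d$ — immediately give the final assertion that $\mathrm{Gal}(F_n,\mathbb{Q}(j))$ permutes the $h_a$ via $h_a\mapsto h_{a\gamma}$.

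Steps (a), (b), (c) follow from the properties of Klein forms already recorded. For (a), Proposition \ref{Kleinforms}(2) gives $\mathfrak{k}_{a+b}=\epsilon(a,b)\mathfrak{k}_a$ with $\epsilon(a,b)$ a $2n$th root of unity when $b\in\mathbb{Z}^2$, $a\in\tfrac1n\mathbb{Z}^2$; raising to the $12n$th power kills $\epsilon(a,b)$, so $h_{a+b}=h_a$. For (b): since $n=p^k$ with $p\ge5$ is odd and $\ge3$, Proposition \ref{Kleinforms}(5) applied to the single exponent $m_a=12n$ shows $\mathfrak{k}_a^{12n}$ is a nearly holomorphic modular form for $\Gamma(n)$ of weight $-12n$, the three congruences there holding trivially because $na_1,na_2\in\mathbb{Z}$; multiplying by $\Delta^n$ (weight $12$, level $1$) gives weight $0$, hence a modular function, and as $\mathfrak{k}_a$ and $\Delta$ are holomorphic and non-vanishing on $\mathbb{H}$, it is a modular unit. (The smaller power $g_a^{12}$ already has weight $0$ but need not satisfy the level-$n$ congruences, which is why the exponent $12n$ is used.) For (c), with $\gamma=\begin{pmatrix}p&q\\r&s\end{pmatrix}$, Proposition \ref{Kleinforms}(3) gives $\mathfrak{k}_a(\gamma(\tau))=(r\tau+s)^{-1}\mathfrak{k}_{a\gamma}(\tau)$ and $\Delta(\gamma(\tau))=(r\tau+s)^{12}\Delta(\tau)$; the automorphy factors $(r\tau+s)^{\mp 12n}$ cancel, leaving $h_a(\gamma(\tau))=h_{a\gamma}(\tau)$.

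For (d), and for the field-of-definition part of (b), I would invoke the classical product expansion of the Siegel function (see \cite{Lang:ef} or \cite[Chapter 2]{KL}): with $q=e^{2\pi i\tau}$,
$$ g_a(\tau) = -q^{\frac{1}{2}B_2(a_1)}e^{\pi i a_2(a_1-1)}\prod_{m=0}^{\infty}\bigl(1-q^{m+a_1}e^{2\pi i a_2}\bigr)\prod_{m=1}^{\infty}\bigl(1-q^{m-a_1}e^{-2\pi i a_2}\bigr). $$
Writing $a_1=r/n$, $a_2=s/n$ and raising to the $12n$th power, the leading factor becomes $q^{6nB_2(r/n)}=(q^{1/n})^{\,6r^2-6rn+n^2}$ times $\zeta_n^{6rs}$, while each factor $e^{\pm2\pi i a_2}=\zeta_n^{\pm s}$ lies in $\mathbb{Z}[\zeta_n]$; expanding in powers of $q^{1/n}$ shows that $h_a$, and likewise every $SL_2(\mathbb{Z}/n\mathbb{Z})$-conjugate $h_{a\gamma}$, has all $q^{1/n}$-expansion coefficients in $\mathbb{Q}_n$, so $h_a\in F_n$. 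Applying $\sigma_d$, which sends $\zeta_n\mapsto\zeta_n^d$, replaces every $e^{\pm2\pi i a_2}=\zeta_n^{\pm s}$ by $\zeta_n^{\pm ds}=e^{\pm2\pi i da_2}$ and the constant $\zeta_n^{6rs}$ by $\zeta_n^{6r(ds)}$, while fixing the power of $q$ and all factors $q^{m\pm a_1}$ depending only on $a_1$; comparing with the same expansion for $h_{(a_1,da_2)}$ gives $\sigma_d(h_{a_1,a_2}(\tau))=h_{a_1,da_2}(\tau)$.

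The step I expect to require the most care is the bookkeeping in the last paragraph: one must confirm that after the $12n$th power the half-integral exponent $\tfrac12 B_2(a_1)$ becomes an integral power of $q^{1/n}$, that the unit constant $-q^{\frac12 B_2(a_1)}e^{\pi i a_2(a_1-1)}$ contributes only an integer power of $\zeta_n$ (so that no root of unity outside $\mathbb{Q}_n$ is introduced), and that this constant transforms under $\sigma_d$ exactly as the substitution $a_2\mapsto da_2$ predicts — including the degenerate case $a_1=0$. Everything else is a formal consequence of Proposition \ref{Kleinforms}(2)--(5), the weight-$12$ transformation law of $\Delta$, and Theorem \ref{fricke}.
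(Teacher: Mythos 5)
Your argument is correct, and it is worth noting that the paper itself offers no proof of this proposition: it simply cites \cite[Chapter 2]{KL} and \cite{Siegel}. What you have written is, in effect, a self-contained reconstruction of the standard Kubert--Lang argument. The formal parts (a)--(c) are handled exactly as one should: the $12n$th power kills the $2n$th root of unity $\epsilon(a,b)$ from Proposition \ref{Kleinforms}(2); the three congruences in Proposition \ref{Kleinforms}(5) hold trivially for the single exponent $m_a=12n$ because $na_i\in\mathbb{Z}$ and $n\mid 12n$ (and your parenthetical observation that $g_a^{12}$ alone would not satisfy them, since $\gcd(12,p^k)=1$, is the correct explanation for the exponent $12n$); and the automorphy factors $(r\tau+s)^{\mp 12n}$ from $\mathfrak{k}_a^{12n}$ and $\Delta^{n}$ cancel to give $h_a(\gamma(\tau))=h_{a\gamma}(\tau)$. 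For (d) your bookkeeping with the $q$-product is also right: with $a=(r/n,s/n)$ the leading factor of $g_a^{12n}$ is $(q^{1/n})^{6r^2-6rn+n^2}\zeta_n^{6rs}$, all remaining coefficients lie in $\mathbb{Z}[\zeta_n]$, and $\sigma_d$ replaces $\zeta_n^{\pm s}$ by $\zeta_n^{\pm ds}$ and $\zeta_n^{6rs}$ by $\zeta_n^{6rds}$ while fixing the integral powers of $q^{1/n}$, which is precisely the expansion of $h_{(a_1,da_2)}$; together with your check that every $SL_2(\mathbb{Z}/n\mathbb{Z})$-conjugate $h_{a\gamma}$ has the same shape, this gives $h_a\in F_n$ and the full Galois action via Theorem \ref{fricke}(3). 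The only thing your write-up buys beyond the citation is transparency (and the extra observation that $h_a$ is actually a unit, nonvanishing on $\mathbb{H}$); conversely, the cited sources treat general $n$ and the normalization constants of $\mathfrak{k}_a$ and $\Delta^{1/12}$ with more care, but since you work exclusively with the unambiguous $12n$th powers, none of that affects your argument.
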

\begin{proof}
 \cite[Chapter 2]{KL} or \cite{Siegel}.
\end{proof} 

\begin{thm}\label{unitàmodulari}
Assume that $ n=p^k $ for $ p \not=2,3.$ Then the units in $ F_n $ (modulo constants) consist of the power products:
$$ \prod_{a \in \frac{1}{n}\mathbb{Z}^2 \setminus \mathbb{Z}^2} g_a^{m_a}(\tau)   $$
 with:
$$ \sum_a {m_a (n a_1)^2 } \equiv  \sum_a {m_a (n a_2)^2 } \equiv  \sum_a {m_a (n a_1)(n a_2) \equiv 0 \mod n } $$ 
and $$ \sum_a{m_a} \equiv 0 \mod 12. $$
\noindent 
In addition, if $ k \ge 2 $ it is not restrictive to consider power products of Siegel functions $g_a$ with primitive index $a=(a_1,a_2)$, namely such that $ p^{k-1}a \not\in \mathbb{Z}^2  $.

\end{thm}

\begin{proof}
See \cite{Siegelgenerator}, \cite[Theorem 3.2, Chapter 2]{KL}, \cite[Theorem 5.2, Chapter 3]{KL} and \cite[Theorem 1.1, Chapter 4]{KL} . The last assertion is a consequence of the distribution relations discussed in \cite[pp. 17-23]{KL}.
\end{proof}

Following \cite{KL} it will be useful to decompose  Gal($ F_{p^{k}},\mathbb{Q}(j) $). Let $ \mathfrak{o}_p $ the ring of integers in the unramified quadratic extension of the $p$-adic field $ \mathbb{Q}_p $. The group of units $ \mathfrak{o}_p^* $ acts on $ \mathfrak{o}_p $ by multiplication and after choosing a basis of  $ \mathfrak{o}_p $ over the $p$-adic ring $ \mathbb{Z}_p $, we obtain an embedding:
$$ \mathfrak{o}_p^* \longrightarrow GL_2(\mathbb{Z}_p).$$

\noindent
We call the image in $ GL_2(\mathbb{Z}_p) $ the Cartan Group at the prime $ p $ and indicate it by $C_p $. It is worth noting that the elements of $  \mathfrak{o}_p^* $, written in terms of a basis of $ \mathfrak{o}_p $ over $ \mathbb{Z}_p $, are characterized by the fact that at least one of the two coefficients is a unit. 

Consider now $ GL_2(\mathbb{Z}_p) $ as operating on $\mathbb{Z}_p^2 $ on the left and denote by $ G_{p,\infty} $ the isotropy group of $ \begin{pmatrix}
1 \\ 0
\end{pmatrix} $. Obviously we have:

\begin{center}
$  G_{p,\infty} = \left\{  \begin{pmatrix} 1&b\\0&d \end{pmatrix} b  \in \mathbb{Z}_p , d \in \mathbb{Z}_p^* \right\}. $ 

\end{center}

 Since $ C_p $ operates simply transitively on the set of primitive elements (that is: vectors whose coordinates are not both divisibile by $ p $) we have the following decomposition:
 $$ GL_2(\mathbb{Z}_p)/\{\pm I\} = ( C_p/\{\pm I\} )   G_{p,\infty}.  $$
\noindent
For each integer $ k $ we define the reduction of the Cartan Group $ C_p \mod p^k$:
$$ C(p^k)= C_p / p^k C_p $$
\noindent
and let $ G_\infty(p^k) $ the reduction of $ G_{p,\infty} \mod p^k $:

\begin{center}
$  G_\infty(p^k) = \left\{  \begin{pmatrix} 1&b\\0&d \end{pmatrix} b  \in \mathbb{Z}/p^k\mathbb{Z} , d \in  (\mathbb{Z}/p^k\mathbb{Z})^*  \right\}.$ 

\end{center}
\noindent
We can now reformulate the previous decomposition as follows: 

\begin{center}
Gal($ F_{p^k} $,$ \mathbb{Q}(j) $) $   \simeq GL_2( \mathbb{Z}/p^k\mathbb{Z})/\{\pm I\} = ( C(p^k)/\{\pm I\} )   G_{\infty}(p^k).   $  

\end{center}

\noindent
The embedding:
$$ F_{p^k} \hookrightarrow \mathbb{Q}(\zeta_{p^k})((q^{1/{p^k}})) $$
enables us to mesaure  for each modular function $ f(\tau) \in F_{p^k} $ its order at $ \Gamma(p^k) \infty $ in term of the local parameter $ q^{1/{p^k}} $. 

\begin{prop}
If $ a \in \frac{1}{p^k} \mathbb{Z}^2 \setminus \mathbb{Z}^2 $, the $q$-expansion of the Siegel functions shows that:
\begin{center} 

ord$_\infty (g_a(\tau))^{12p^k} = 6p^{2k} B_2(\langle a_1 \rangle)$

\end{center}

\noindent
where $ B_2(X) = X^2 -X + \frac{1}{6} $ is the second Bernoulli polynomial and $ \langle X \rangle $ is the fractional part of $ X $. 

\end{prop}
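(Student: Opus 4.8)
The plan is to read the statement off directly from the $q$-product expansion of the Siegel function, once two bookkeeping points are handled correctly: first, that $g_a$ depends on $a$ only modulo $\mathbb{Z}^2$ up to a root of unity, so that $B_2(a_1)$ may and must be replaced by $B_2(\langle a_1\rangle)$; and second, that the local parameter at the cusp $\Gamma(p^k)\infty$ is $q^{1/p^k}=e^{2\pi i\tau/p^k}$ rather than $q=e^{2\pi i\tau}$, because $\infty$ has width $p^k$ for $\Gamma(p^k)$ (its stabilizer in $\Gamma(p^k)/\{\pm I\}$ is generated by $\left(\begin{smallmatrix}1&p^k\\0&1\end{smallmatrix}\right)$).

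First I would record the product expansion. Feeding the Weierstrass product for $\sigma_L$ and the quasi-period $\eta_L$ occurring in the definition of $\mathfrak{k}_a$ into the factorization $\Delta(\tau)=(2\pi i)^{12}q\prod_{n\ge1}(1-q^n)^{24}$ (so that $\Delta^{1/12}$ is, up to a root of unity, $(2\pi i)\,q^{1/12}\prod_{n\ge1}(1-q^n)^2$), one obtains the classical formula (\cite[Chapter 2]{KL}): with $q=e^{2\pi i\tau}$ and $q_z=e^{2\pi i(a_1\tau+a_2)}=q^{a_1}e^{2\pi i a_2}$,
$$ g_a(\tau) = -\,q^{\frac12 B_2(a_1)}\,e^{\pi i a_2(a_1-1)}\,(1-q_z)\prod_{n\ge1}(1-q^n q_z)(1-q^n q_z^{-1}). $$
Every exponent of $q$ here lies in $\frac1{12}\mathbb{Z}+a_1\mathbb{Z}$, so $g_a(\tau)$ lies in $\mathbb{Q}(\zeta_{p^k})((q^{1/p^k}))$ under the embedding recalled above and its order there may be computed term by term. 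By Proposition \ref{Kleinforms}(2) the functions $\mathfrak{k}_{a+b}$ and $\mathfrak{k}_a$, hence $g_{a+b}$ and $g_a$, differ only by a root of unity for $b\in\mathbb{Z}^2$, and multiplication by a constant does not change the order at a cusp; so I may take $a=(\langle a_1\rangle,\langle a_2\rangle)$ with $0\le\langle a_1\rangle<1$. With this representative the leading term of the expansion is exactly $-\,q^{\frac12 B_2(\langle a_1\rangle)}e^{\pi i\langle a_2\rangle(\langle a_1\rangle-1)}$: if $0<\langle a_1\rangle<1$ then each factor $1-q^{n+\langle a_1\rangle}e^{2\pi i\langle a_2\rangle}$ $(n\ge0)$ and $1-q^{n-\langle a_1\rangle}e^{-2\pi i\langle a_2\rangle}$ $(n\ge1)$ tends to $1$ as $q\to0$; and if $\langle a_1\rangle=0$, then $\langle a_2\rangle\ne0$ since $a\notin\mathbb{Z}^2$, so the constant term of the product is $1-e^{2\pi i\langle a_2\rangle}\ne0$ while $B_2(0)=\tfrac16$ still gives the exponent $\tfrac1{12}$. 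In every case the order of $g_a(\tau)$ in the variable $q=e^{2\pi i\tau}$ equals $\tfrac12 B_2(\langle a_1\rangle)$.

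Finally, since the local uniformizer at $\Gamma(p^k)\infty$ is $q^{1/p^k}$, we get $\mathrm{ord}_\infty(g_a(\tau))=p^k\cdot\tfrac12 B_2(\langle a_1\rangle)$, and therefore
$$ \mathrm{ord}_\infty\bigl(g_a(\tau)^{12p^k}\bigr)=12p^k\cdot p^k\cdot\tfrac12 B_2(\langle a_1\rangle)=6p^{2k}B_2(\langle a_1\rangle), $$
which is the asserted formula (and is an integer, as $6p^{2k}B_2(c/p^k)=6c^2-6cp^k+p^{2k}$). The only genuinely delicate step is the first one: pinning down the product expansion of $g_a$ with a consistent normalization and then correctly isolating its leading term after reducing $a_1$ modulo $\mathbb{Z}$ — in particular not confusing $B_2(a_1)$ with $B_2(\langle a_1\rangle)$; once that is done, the statement is an immediate consequence of the definition of the local parameter at the cusp and the width $p^k$.
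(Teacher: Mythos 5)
Your argument is correct and is essentially the computation the paper delegates to its citation of Lang, Chapter 19: read the order off the $q$-product expansion of $g_a$, reduce $a$ modulo $\mathbb{Z}^2$ via Proposition \ref{Kleinforms}(2) so that $B_2(a_1)$ becomes $B_2(\langle a_1\rangle)$, and convert to the local parameter $q^{1/p^k}$ of width $p^k$. The one blemish is the side remark that the exponents of $q$ in $g_a$ itself lie in $\tfrac{1}{12}\mathbb{Z}+a_1\mathbb{Z}$ so that $g_a\in\mathbb{Q}(\zeta_{p^k})((q^{1/p^k}))$ --- the leading exponent $\tfrac12 B_2(c/p^k)$ generally is not of that form, and it is only the power $g_a^{12p^k}$ that lies in $F_{p^k}\hookrightarrow\mathbb{Q}(\zeta_{p^k})((q^{1/p^k}))$ --- but this parenthetical is not used in the order computation and does not affect the result.
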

\begin{proof}
\cite[Chapter 19]{Lang:ef}.
\end{proof}

 For every automorphism $ \sigma \in $   Gal($ F_{p^{k}},\mathbb{Q}(j) $) and each $h(\tau) \in F_{p^k}$ we have the prime $ \sigma^{-1}(\infty) $ which is such that:

\begin{center}
ord$_{\sigma^{-1}(\infty)} (h(\tau)) = $ ord$_\infty \sigma (h(\tau)) $ 
\end{center}

\noindent
and if $  \sigma \in G_\infty(p^k)  $:

\begin{center}
ord$_\infty (h(\tau)) = $ ord$_\infty \sigma (h(\tau)), $
\end{center}

\noindent 
so we may identify the cusps of $ X(p^k) $ with the elements of the Cartan Group (viewing it as a subgroup of  Gal($ F_{p^{k}},\mathbb{Q}(j) $)). From now on, we will indicate the cusp $ \sigma^{-1}(\infty) $ simply by $ \sigma^{-1} $. \\
We may also index the primitive Siegel function by elements of the Cartan Group. Following \cite{KL}, if $ \alpha \in  C(p^k)/\{\pm I\} $ we put:

\begin{center}  

$ g_\alpha = g_{e_1 \alpha } $ where $ e_1=(\frac{1}{p^k},0). $

\end{center} 
\noindent 
It should be noted that $ g_\alpha $ is defined up to a root of unity (this follows from Proposition \ref{Kleinforms}, second claim). Nonetheless, $ g_\alpha ^{12p^k} $ is univocally defined as well as its divisor:

\begin{prop}\label{divisori} We have:
$$ \mbox{div }  g_\alpha^{12p^k} = 6p^{2k} \sum_{\beta \in  C(p^k)/\{\pm I\}  } B_2 \left( \left\langle \frac{T(\alpha\beta^{-1})}{p^k} \right\rangle \right) \beta  $$ 

\noindent 
where the map T on $ 2 \times 2 $ matrices is defined as follows:
$$ T:\begin{pmatrix} a&b\\c&d \end{pmatrix} \mapsto a . $$

\end{prop}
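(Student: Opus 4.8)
The plan is to reduce the computation of $\mathrm{div}\, g_\alpha^{12p^k}$ on $X(p^k)$ to the single, already–established computation of the order at the cusp $\infty$, by using the Galois action to move from an arbitrary cusp back to $\infty$. First I would note that $g_\alpha^{12p^k}$ is genuinely a rational function on the compact curve $X(p^k)$: the Klein form $\mathfrak{k}_a$ has weight $-1$ and $\Delta^{1/12}$ has weight $1$, so $g_a=\mathfrak{k}_a\Delta^{1/12}$ has weight $0$, and by Proposition \ref{sigfun} the function $h_a:=g_a^{12p^k}$ lies in $F_{p^k}$; since $\mathfrak{k}_a$ and $\Delta$ are holomorphic and nowhere vanishing on $\mathbb{H}$, it is a modular unit, so its divisor is a degree‑$0$ combination of cusps. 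It therefore suffices to compute $\mathrm{ord}_\beta\big(g_\alpha^{12p^k}\big)$ for each cusp, where, under the identification recalled above, the cusps are indexed by $\beta\in C(p^k)/\{\pm I\}$ and $\mathrm{ord}_\beta(h)=\mathrm{ord}_\infty\big(\beta^{-1}(h)\big)$ for every $h\in F_{p^k}$.

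Next I would apply the Galois action on Siegel functions. Writing $g_\alpha^{12p^k}=h_{e_1\alpha}$ with $e_1=(\tfrac{1}{p^k},0)$, and using that an element $\beta$ of the Cartan group, viewed inside $\mathrm{Gal}(F_{p^k},\mathbb{Q}(j))\cong GL_2(\mathbb{Z}/p^k\mathbb{Z})/\{\pm I\}$, acts by $h_a\mapsto h_{a\beta}$ (Proposition \ref{sigfun} together with Theorem \ref{fricke}(3)), I get
$$ \beta^{-1}\big(g_\alpha^{12p^k}\big)=h_{e_1\alpha\beta^{-1}}=g_{e_1\alpha\beta^{-1}}^{12p^k}. $$
If $\alpha\beta^{-1}$ is represented by $\begin{pmatrix}a&b\\ c&d\end{pmatrix}$ then $e_1\alpha\beta^{-1}=\big(\tfrac{a}{p^k},\tfrac{b}{p^k}\big)$, whose first coordinate is $T(\alpha\beta^{-1})/p^k$; since the first row of an element of $GL_2(\mathbb{Z}/p^k\mathbb{Z})$ is primitive, this vector lies in $\tfrac{1}{p^k}\mathbb{Z}^2\setminus\mathbb{Z}^2$, so the Siegel function is defined. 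Feeding this into the order formula of the preceding proposition, $\mathrm{ord}_\infty(g_a^{12p^k})=6p^{2k}B_2(\langle a_1\rangle)$, yields
$$ \mathrm{ord}_\beta\big(g_\alpha^{12p^k}\big)=6p^{2k}\,B_2\!\left(\left\langle\frac{T(\alpha\beta^{-1})}{p^k}\right\rangle\right), $$
and summing over $\beta\in C(p^k)/\{\pm I\}$ gives exactly the stated formula for $\mathrm{div}\, g_\alpha^{12p^k}$.

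The steps I expect to require the most care are purely bookkeeping, and this is where I expect the main obstacle. One must pin down the "inverse" convention in the identification of the cusps of $X(p^k)$ with $C(p^k)/\{\pm I\}$, as well as the direction of the action $\theta(\gamma)\colon h_a\mapsto h_{a\gamma}$, so that the coefficient attached to the cusp $\beta$ comes out as $B_2(\langle T(\alpha\beta^{-1})/p^k\rangle)$ rather than some conjugate, transpose, or $\beta\mapsto\beta^{-1}$ variant of it; the references (\cite{KL}, \cite{Lang:ef}, \cite{Shimura:af}) use slightly different conventions and these must be reconciled. I would also check that passing to the quotient by $\{\pm I\}$ is harmless: $g_{-a}=-g_a$ by Proposition \ref{Kleinforms}(1), and this sign together with the root‑of‑unity ambiguity in $g_\alpha$ coming from Proposition \ref{Kleinforms}(2) is killed by raising to the $12p^k$‑th power, so that $g_\alpha^{12p^k}$ and the quantity $B_2(\langle T(\alpha\beta^{-1})/p^k\rangle)$ (which is even in $T$) are well defined for $\alpha,\beta\in C(p^k)/\{\pm I\}$. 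Finally, as an internal consistency check one can note that $\sum_\beta B_2(\langle T(\alpha\beta^{-1})/p^k\rangle)=0$, which is automatic since $g_\alpha^{12p^k}$ is a rational function on a compact curve, but can also be recovered from the distribution relation for $B_2$; this is not needed for the argument.
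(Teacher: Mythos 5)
Your argument is correct and is exactly the computation the paper intends: the paper's own ``proof'' is only a citation to \cite[Paragraph 5.1]{KL}, but every ingredient you use --- the identification $\mathrm{ord}_{\sigma^{-1}(\infty)}(h)=\mathrm{ord}_\infty\,\sigma(h)$ of cusps with elements of $C(p^k)/\{\pm I\}$, the Fricke-family property $h_a\mapsto h_{a\beta}$ of Proposition \ref{sigfun}, and the $q$-expansion formula $\mathrm{ord}_\infty\, g_a^{12p^k}=6p^{2k}B_2(\langle a_1\rangle)$ --- is set up in the paragraphs immediately preceding the statement precisely so that this computation goes through. Your bookkeeping is also right: the cusp $\beta$ is $\sigma^{-1}(\infty)$ with $\sigma=\beta^{-1}$, so the relevant index is $e_1\alpha\beta^{-1}$ whose first coordinate is $T(\alpha\beta^{-1})/p^k$, and the $\{\pm I\}$ ambiguity is killed by the $12p^k$-th power together with $B_2(\langle -x\rangle)=B_2(\langle x\rangle)$.
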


\begin{proof}
See \cite[Paragraph 5.1]{KL}
\end{proof}

The first part of \cite{KL} culminates with the theorem below. The computation of the order of the cuspidal divisor class group on $ X(p^k) $ could be considered analogous to that in the study of cyclotomic fields: instead of the generalized Bernoulli numbers $ B_{1,\chi} $ encountered in the latter case, in the former we will define the second generalized Bernoulli numbers $ B_{2,\chi} $.  
 
\begin{thm}\label{CDCG}
Let $ p $ a prime $ \ge 5 $. Let $ R:= \mathbb{Z}[C(p^k)/\{\pm 1\}] $ and $ R_0 $ the ideal of $ R $ consisting of elements of degree $ 0$. The Cuspidal Divisor Class Group $ \mathfrak{C}_{p^k} $ is an $R-$module, more precisely there exists a Stickelberger element
$$ \theta = \displaystyle \frac{p^k}{2}\sum_{\beta \in C(p^k)/\{\pm 1\} } B_2 \left( \left\langle\frac{T(\beta)}{p^k} \right\rangle \right) \beta^{-1}  \in \mathbb{Q}[C(p^k)/\{\pm 1\}]$$ such that:
$$ \mathfrak{C}_{p^k} \cong R_0 / R \cap R \theta. $$

 For any character $ \chi $ of    $C(p^k)/\{\pm I\}$  (identified  with an even character of $ C(p^k) $) we let:
$$ B_{2,\chi} =  \sum_{\alpha \in  C(p^k)/\{\pm I\}  } B_2 \left( \left\langle \frac{T(\alpha)}{p^k} \right\rangle \right) \chi(\alpha).  $$ 
\noindent
The order of the cuspidal divisor class group on $ X(p^k) $ is:
$$ | \mathfrak{C}_{p^k}  | =  \frac{12 p^{3k}}{|C(p^k) |} \prod_{\chi \not= 1} \frac{p^k}{2} B_{2,\chi}. $$

\end{thm}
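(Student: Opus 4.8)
The plan is to transpose the argument of Kubert and Lang \cite[Chapter~5]{KL} to this normalisation, deriving everything from Theorem~\ref{unit�modulari} and Proposition~\ref{divisori}. Throughout I identify the cusps of $X(p^k)$ with $G:=C(p^k)/\{\pm I\}$ as explained above, so that $\mathfrak{D}_{p^k}$ becomes $R=\mathbb{Z}[G]$ and $\mathfrak{D}_{p^k,0}$ becomes $R_0=\ker(\varepsilon)$, where $\varepsilon$ is the augmentation. Writing $g_\alpha=g_{e_1\alpha}$ (with $e_1=(p^{-k},0)$) for the primitive Siegel functions and $(a(\alpha),b(\alpha))$ for the first row of $\alpha\in G$, Theorem~\ref{unit�modulari} — together with the distribution relations, used to reduce to primitive indices — identifies the group of modular units of $X(p^k)$ modulo constants with the sublattice $\mathcal{U}\subseteq R$ consisting of those $x=\sum_\alpha m_\alpha\alpha$ for which the three weighted sums $\sum_\alpha m_\alpha a(\alpha)^2$, $\sum_\alpha m_\alpha b(\alpha)^2$, $\sum_\alpha m_\alpha a(\alpha)b(\alpha)$ vanish modulo $p^k$ and $\sum_\alpha m_\alpha\equiv 0\pmod{12}$. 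The divisor map $x\mapsto \mbox{div }\prod_\alpha g_\alpha^{m_\alpha}$ is injective on $\mathcal{U}$ (a modular unit with trivial divisor is constant), and Proposition~\ref{divisori}, after the change of variable $\gamma=\alpha\beta^{-1}$ in the sum over cusps, identifies it with $x\mapsto\theta x$, where $\theta$ is the Stickelberger element of the statement. Hence $\mathfrak{F}_{p^k}=\theta\mathcal{U}$ as a sublattice of $R$.

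Next I would prove the purely algebraic identity $\theta\mathcal{U}=R\cap R\theta$. Since $G$ is abelian, $R$ is commutative and $R\cap R\theta=\{\theta x:x\in R,\ \theta x\in R\}$, so it suffices to show $\theta x\in R\iff x\in\mathcal{U}$. One expands a coefficient of $\theta x$ using
$$ \frac{p^k}{2}\,B_2\!\left(\left\langle\frac{j}{p^k}\right\rangle\right)=\frac{j^2}{2p^k}-\frac{j}{2}+\frac{p^k}{12} $$
and the bilinearity $T(\alpha\beta^{-1})=a(\alpha)u_\beta+b(\alpha)v_\beta$ (first row of $\alpha$ against the first column $(u_\beta,v_\beta)$ of $\beta^{-1}$, which runs through all primitive vectors as $\beta$ varies). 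After clearing denominators, the $1/p^k$-part of integrality is, upon evaluating at $(u_\beta,v_\beta)=(1,0),(0,1),(1,1)$ and using that $2$ is a unit modulo $p^k$, equivalent to the three quadratic congruences modulo $p^k$; and the remaining $1/12$-part, once one notices that $j(j-p^k)$ is always even, is equivalent to $\sum_\alpha m_\alpha\equiv 0\pmod{12}$. Combined with the distribution relation $\sum_\beta B_2(\langle T(\beta)/p^k\rangle)=0$, which gives $\theta\sigma=0$ for $\sigma:=\sum_\alpha\alpha$ and hence $\mathfrak{F}_{p^k}=\theta\mathcal{U}\subseteq R_0$, this yields $\mathfrak{C}_{p^k}\cong R_0/(R\cap R\theta)$.

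For the order I would compute $|\mathfrak{C}_{p^k}|=[R_0:\theta\mathcal{U}]$ through the isotypic decomposition $\mathbb{C}[G]=\bigoplus_\chi\mathbb{C}e_\chi$, on which $\theta$ acts by the scalars $\chi(\theta)=\frac{p^k}{2}B_{2,\bar\chi}$. The relation $\theta\sigma=0$ says $\theta$ annihilates the trivial isotypic line, and since $\mathfrak{C}_{p^k}$ is finite by the Manin--Drinfeld theorem (Theorem~\ref{mandrinf}) it annihilates \emph{only} that line, so that $B_{2,\chi}\neq 0$ for every $\chi\neq 1$. Clearing denominators by $\Theta:=12p^k\theta\in R$, one has $[R_0:\Theta R_0]=|\det(\Theta|_{R_0})|=\prod_{\chi\neq1}|\chi(\Theta)|$; combining this with the elementary facts $[R:R_0\oplus\mathbb{Z}\sigma]=|G|$, $[R:\mathcal{U}]=12p^{3k}$ and $\sigma\in\mathcal{U}$ — and carefully tracking the powers of $2$ coming from $|C(p^k)|=2|G|$, from the evenness invoked above, and from the sign of $\prod_{\chi\neq1}B_{2,\chi}$ — one assembles $[R_0:\theta\mathcal{U}]$ into the stated closed form
$$ |\mathfrak{C}_{p^k}|=\frac{12\,p^{3k}}{|C(p^k)|}\prod_{\chi\neq1}\frac{p^k}{2}\,B_{2,\chi}. $$
I expect the equivalence $\theta x\in R\Leftrightarrow x\in\mathcal{U}$ and this last $2$-power bookkeeping to be the main obstacles; the nonvanishing of $B_{2,\chi}$, which in the cyclotomic analogue requires $L$-function input, here comes for free from Manin--Drinfeld.
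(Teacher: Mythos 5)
Your proposal is correct and follows essentially the same route as the paper, which proves this theorem simply by citing \cite[Chapter 5]{KL}: the identification of the unit group with the sublattice cut out by the quadratic congruences and the $12$-divisibility, the recognition of the divisor map as multiplication by $\theta$ via Proposition \ref{divisori}, the integrality analysis showing $\theta\mathcal{U}=R\cap R\theta$, and the character-theoretic index computation (with nonvanishing of $B_{2,\chi}$ coming from Manin--Drinfeld rather than from $L$-function input) are exactly the Kubert--Lang argument being invoked. The only caveat is that your final index bookkeeping is sketched rather than executed, but the plan contains no step that would fail.
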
 
 
 \begin{proof}
 \cite[Chapter 5]{KL}. 
 \end{proof}

\section{Non-split Cartan Groups} 

Following \cite{BB} or \cite[pag. 194]{SerreMordel}, let $n $ a positive integer and let $ A $ be a finite free commutative algebra of rank $ 2 $ over $ \mathbb{Z}/n\mathbb{Z}$ with unit discriminant. Fixing a basis for $ A $ we can use the action of $ A^* $ on $ A  $ to embed $ A^* $ in $ GL_2(\mathbb{Z}/n\mathbb{Z}) $. If for every prime $ p|n $ the $ \mathbb{F}_p $ algebra $ A/pA $ is isomorphic to $ \mathbb{F}_{p^2} $, the image of $ A^* $ just now described is called a non-split Cartan subgroup of $ GL_2(\mathbb{Z}/n\mathbb{Z}) $.  
Therefore, such a group $ G $ has the property that for every prime $p$ dividing $n$ the reduction of $G\mod p $ is isomorphic to $ \mathbb{F}_{p^2}^* $. All the non-split Cartan subgroups of $ GL_2({\mathbb{Z}/n\mathbb{Z}}) $ are conjugate and so are their normalizers.  

In this paper we are interested in the case $ n=p^k$ and $ p \not= 2,3$. The cases $p=2$ and $ p=3 $ are essentially equal but require more cumbersome calculations (see \cite[Theorem 5.3, Chapter 3]{KL} and \cite[Theorem 1.3, Chapter 4]{KL}).  Choose a squarefree integer $ \epsilon \equiv 3 \mod 4 $ and such that its reduction modulo $ p$ is a quadratic non-residue. If $ p \equiv 3 \mod 4 $, a canonical choice could be $ \epsilon = -1 $.
Let $ K= \mathbb{Q}(\sqrt{\epsilon}) $ and $ \mathbf{O}_K = \mathbb{Z}[\sqrt{\epsilon}]$  its ring of integers. After choosing a basis for $ \mathbf{O}_K $ over $ \mathbb{Z} $ we can represent any element of  $ (\mathbf{O}_K / p^k \mathbf{O}_K)^*  $ with its corresponding multiplication matrix in   $ GL_2({\mathbb{Z}/p^k\mathbb{Z}}) $ with respect to the chosen basis. This embedding produces a non-split Cartan subgroup of $ GL_2({\mathbb{Z}/p^k\mathbb{Z}}) $ and we will denote it by $ C_{ns}(p^k) $. Notice that such a group is isomorphic to the already introduced $ C(p^k) $. 

To describe the normalizer $ C_{ns}^+(p^k) $ of $ C_{ns}(p^k) $ in $ GL_2({\mathbb{Z}/p^k\mathbb{Z}}) $ it will suffice to consider the following group automorphism induced by conjugation by a fixed $c \in  C_{ns}^+(p^k) $:
$$ \phi_c :   C_{ns}(p^k)  \longrightarrow   C_{ns}(p^k)   $$
$$  x \longmapsto \phi_c(x) = cxc^{-1}.  $$
\noindent
The group automorphism $ \phi_c $ extends to a ring automorphism of $ (\mathbf{O}_K / p^k \mathbf{O}_K) \cong (\mathbb{Z}/p^k\mathbb{Z})[\sqrt{\epsilon}] $ so if $ \phi_c $ is not the trivial automorphism we necessarily have $ \phi_c(\sqrt{\epsilon})=-\sqrt{\epsilon} $.

\begin{prop}\label{strutturanonsplit} If $ p \not= 2 $ we have the following isomorphism:
$$ C_{ns}(p^k) \simeq \mathbb{Z}/p^{k-1}\mathbb{Z} \times  \mathbb{Z}/p^{k-1}\mathbb{Z} \times \mathbb{Z}/(p^2-1)\mathbb{Z},  $$
$$ C_{ns}^+(p^k) \simeq (\mathbb{Z}/p^{k-1}\mathbb{Z} \times  \mathbb{Z}/p^{k-1}\mathbb{Z} \times \mathbb{Z}/(p^2-1)\mathbb{Z}) \rtimes_{\phi} \mathbb{Z}/2\mathbb{Z}.  $$
\end{prop}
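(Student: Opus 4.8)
The plan is to describe everything through the ring $R:=\mathbf{O}_K/p^{k}\mathbf{O}_K$ and the identification $C_{ns}(p^{k})\cong R^{*}$ that is built into the construction of the non-split Cartan group. Since $\epsilon$ is a quadratic non-residue modulo $p$, the prime $p$ stays inert in $K$, so $R$ is a local ring with maximal ideal $\mathfrak{m}=pR$, residue field $R/\mathfrak{m}\cong\mathbb{F}_{p^{2}}$ and $|R|=p^{2k}$; hence $|R^{*}|=p^{2k}-p^{2k-2}=p^{2(k-1)}(p^{2}-1)$. Reduction modulo $\mathfrak{m}$ yields an exact sequence $1\to 1+\mathfrak{m}\to R^{*}\to\mathbb{F}_{p^{2}}^{*}\to 1$ in which $1+\mathfrak{m}$ is a $p$-group of order $p^{2(k-1)}$ while $\mathbb{F}_{p^{2}}^{*}$ is cyclic of order $p^{2}-1$. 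These orders being coprime and $R^{*}$ abelian, the sequence splits and $R^{*}\cong(1+\mathfrak{m})\times\mathbb{Z}/(p^{2}-1)\mathbb{Z}$.

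It then remains to identify the $p$-part $1+\mathfrak{m}\cong(1+p\mathbf{O}_K)/(1+p^{k}\mathbf{O}_K)$, and here I would pass to the completion $\mathfrak{o}_p=\mathbf{O}_K\otimes_{\mathbb{Z}}\mathbb{Z}_p$, the ring of integers of the unramified quadratic extension of $\mathbb{Q}_p$. For $p\neq 2$ the $p$-adic logarithm and exponential series converge on $1+p\mathfrak{o}_p$ and on $p\mathfrak{o}_p$ respectively and are mutually inverse, producing an isomorphism of topological $\mathbb{Z}_p$-modules $1+p\mathfrak{o}_p\xrightarrow{\ \sim\ }p\mathfrak{o}_p$ that carries $1+p^{k}\mathfrak{o}_p$ onto $p^{k}\mathfrak{o}_p$. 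Reducing, $1+\mathfrak{m}\cong p\mathfrak{o}_p/p^{k}\mathfrak{o}_p\cong\mathfrak{o}_p/p^{k-1}\mathfrak{o}_p\cong(\mathbb{Z}/p^{k-1}\mathbb{Z})^{2}$, the last isomorphism because $\mathbf{O}_K$ (hence $\mathfrak{o}_p$) is free of rank $2$. Combined with the previous paragraph this gives $C_{ns}(p^{k})\cong(\mathbb{Z}/p^{k-1}\mathbb{Z})^{2}\times\mathbb{Z}/(p^{2}-1)\mathbb{Z}$. (One can bypass $p$-adic analysis by working with the filtration by the subgroups $1+p^{i}\mathbf{O}_K$ and using that for odd $p$ the $p$-th power map induces isomorphisms between consecutive graded quotients; this is elementary but a little longer.)

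For the normalizer put $N:=C_{ns}^{+}(p^{k})$. Conjugation by $c\in N$ carries $C_{ns}(p^{k})=R^{*}$ to itself and, as recorded in the excerpt, extends to a ring automorphism $\phi_c$ of $R$; and $R=(\mathbb{Z}/p^{k}\mathbb{Z})[\sqrt{\epsilon}]$ has exactly two automorphisms over $\mathbb{Z}/p^{k}\mathbb{Z}$, the identity and $\sqrt{\epsilon}\mapsto-\sqrt{\epsilon}$ (by Hensel's lemma $X^{2}=\epsilon$ has just the two roots $\pm\sqrt{\epsilon}$ in $R$, since it has distinct roots in $\mathbb{F}_{p^{2}}$). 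The kernel of $c\mapsto\phi_c$ is the centralizer of $C_{ns}(p^{k})$ in $GL_2(\mathbb{Z}/p^{k}\mathbb{Z})$, which equals $C_{ns}(p^{k})$ because $\sqrt{\epsilon}$ has distinct reductions $\pm\sqrt{\epsilon}$ modulo $p$, so any matrix commuting with the multiplication matrix $M_{\sqrt{\epsilon}}$ lies in $(\mathbb{Z}/p^{k}\mathbb{Z})[M_{\sqrt{\epsilon}}]=R$. Hence $[N:C_{ns}(p^{k})]\le 2$. Writing $\mathbf{O}_K$ in the basis $\{1,\sqrt{\epsilon}\}$, the matrix $J=\left(\begin{smallmatrix}1&0\\0&-1\end{smallmatrix}\right)$ of complex conjugation satisfies $JM_{\sqrt{\epsilon}}J^{-1}=M_{-\sqrt{\epsilon}}\neq M_{\sqrt{\epsilon}}$ (here we use $p\neq 2$), so $J\in N\setminus C_{ns}(p^{k})$; thus the index is exactly $2$ and, since $J^{2}=I$, the subgroup $\langle J\rangle$ is a complement to the normal subgroup $C_{ns}(p^{k})$ in $N$ and its conjugation action is precisely $\phi$. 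Therefore $C_{ns}^{+}(p^{k})=C_{ns}(p^{k})\rtimes_{\phi}\langle J\rangle\cong\big((\mathbb{Z}/p^{k-1}\mathbb{Z})^{2}\times\mathbb{Z}/(p^{2}-1)\mathbb{Z}\big)\rtimes_{\phi}\mathbb{Z}/2\mathbb{Z}$.

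The only genuinely delicate step is the second paragraph: pinning down $1+\mathfrak{m}$ as $(\mathbb{Z}/p^{k-1}\mathbb{Z})^{2}$ on the nose rather than merely as some abelian $p$-group of that order. The hypothesis $p\neq 2$ enters exactly there — it is what makes the $p$-adic logarithm converge on $1+p\mathbf{O}_K$ — and, more trivially, it is also what guarantees $M_{-\sqrt{\epsilon}}\neq M_{\sqrt{\epsilon}}$ in the normalizer argument.
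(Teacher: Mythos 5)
Your proof is correct, and its overall skeleton matches the paper's: both arguments reduce modulo $p$ to get the exact sequence $1\to B\to C_{ns}(p^k)\to\mathbb{F}_{p^2}^*\to 1$ with $|B|=p^{2k-2}$, split it off using coprimality of the orders, and then concentrate on identifying the $p$-part $B$. Where you genuinely diverge is in that last, delicate step. You invoke the $p$-adic logarithm and exponential on $1+p\mathfrak{o}_p$ to get a $\mathbb{Z}_p$-module isomorphism $B\cong p\mathfrak{o}_p/p^k\mathfrak{o}_p\cong(\mathbb{Z}/p^{k-1}\mathbb{Z})^2$ in one stroke; the paper instead proves by induction the congruence $(1+xp)^{p^{k-2}}\equiv 1+xp^{k-1}\bmod p^k$, deduces that $1+xp$ has order $p^{k-1-h}$ when $x\in p^h\mathbf{O}_K\setminus p^{h+1}\mathbf{O}_K$, and then pins down the group structure by counting the $p^{2k-2}-p^{2k-4}$ elements of maximal order (this is essentially the ``elementary but longer'' filtration route you mention in passing). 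Your route is cleaner and conceptually standard but imports $p$-adic analysis; the paper's stays entirely inside congruence arithmetic, at the cost of a final counting argument that is left rather terse. For the normalizer you actually give more than the paper, which dismisses the second isomorphism with ``follows immediately'': your identification of the centralizer of $C_{ns}(p^k)$ with itself, the two-element automorphism group of $R$ over $\mathbb{Z}/p^k\mathbb{Z}$ via Hensel, and the explicit involution $J=\left(\begin{smallmatrix}1&0\\0&-1\end{smallmatrix}\right)$ as a complement is a complete and correct filling of that gap (and it is consistent with the matrix $C$ the paper later uses to generate $C^+_{ns}(p^k)$). One small point of hygiene: $1+p\mathbf{O}_K$ is only a multiplicative monoid, not a group, so the quotient $(1+p\mathbf{O}_K)/(1+p^k\mathbf{O}_K)$ should really be read, as you in effect do, through the completion $\mathfrak{o}_p$ where $1+p\mathfrak{o}_p$ is honestly a group.
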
  

\begin{proof}
Let $ a_1 + \sqrt{\epsilon}a_2 \in  (\mathbf{O}_K / p^k \mathbf{O}_K)$: it is invertible if and only if $ (a_1,a_2)$ is primitive or in other words $ p $ does not divide both $ a_1 $ and $ a_2 $ so we have $ |C_{ns}(p^k)| = p^{2k-2}(p^2-1) $. 
Consider the reduction$\mod p $:
$$ C_{ns}(p^k) \longrightarrow \mathbb{F}_{p^2}^*  $$
$$  a_1 + \sqrt{\epsilon}a_2 \longmapsto  \overbar{a_1} + \sqrt{\epsilon}\overbar{a_2}. $$
 The map is surjective and let $B$ its kernel:
 $$ B:=\{x \in (\mathbf{O}_K / p^k \mathbf{O}_K)^*  \mbox{ such that } x \equiv 1\mod p \}. $$
$ |B|=p^{2k-2} $: it remains to check that $ B \simeq \mathbb{Z}/p^{k-1}\mathbb{Z} \times  \mathbb{Z}/p^{k-1}\mathbb{Z} $.
Let $ k\ge2 $ and $ p \not=2 $. 
First, we check that for all $ x \in \mathbf{O}_K $ we have $ (1+xp)^{p^{k-2}} \equiv 1+xp^{k-1} \mod p^k $. In case $ k=2 $ there is nothing to prove. We proceed by induction on $ k $: suppose the claim is true for some $ k\ge2 $. We have:
 $$  (1+xp)^{p^{k-2}} = 1+xp^{k-1} + yp^k, $$ 
$$ (1+xp)^{p^{k-1}} = \sum_{j=0}^p {p \choose j}{(1+xp^{k-1})}^{p-j}({yp^k})^j  \equiv (1+xp^{k-1})^p \mod p^{k+1},  $$  
 $$  (1+xp^{k-1})^p = \sum_{j=0}^p {p \choose j}(xp^{k-1})^j \equiv 1 + xp^k \mod p^{k+1}.  $$

\noindent
In conclusion: $ (1+xp)^{p^{k-1}} \equiv 1 + xp^k \mod p^{k+1}  $. From the previous claim follows that if $ h\le k-1$ is such that $ x \in p^h\mathbf{O}_K \setminus p^{h+1}\mathbf{O}_K $ then the reduction of $ 1+xp $ in $ B $ has order $ p^{k-1-h} $. So $B $ has $ p^{2k-2}-p^{2k-4}$ elements of order $ p^{k-1} $ and the proposition is proved. The second isomorphism follows immediately.      
   
\end{proof} 

We present now the modular curves $ X_{ns}(n) $ and  $ X_{ns}^+(n) $ associated to the subgroups $ C_{ns}(n) $ and  $ C_{ns}^+(n) $. First of all, $ Y(n) $ (the non-cuspidal points of $ X(n) $) are isomorphism classes of pairs $(E,(P,Q)) $ where $ E $ is a complex elliptic curve and $ (P,Q) $ constitute a $ \mathbb{Z}/n\mathbb{Z}$-basis of the $n$-torsion subgroup $ E[n]$ with $ e_n(P,Q)=e^{2\pi i / n}$ where $ e_n $ is the Weil pairing discussed in details in \cite[Chapter 7]{Diamond:mf}. 
By definition, two pairs $(E,(P,Q)) $ and  $(E',(P',Q')) $ are considered equivalent in $ Y(n) $ if and only if there exists an isomorphism between $ E $ and $ E'$ taking $ P $ to $ P' $ and $ Q $ to $ Q'$. Notice that the definition is well-posed since the Weil pairing is invariant under isomorphism, i.e. if $ f: E \rightarrow E' $ is an isomorphism of elliptic curves and $ e'_n $ is the Weil pairing on $ E' $ we have: 
$$ e'_n(f(P),f(Q))=e_n(P,Q).   $$
\noindent
Since  $ GL_2(\mathbb{Z}/n\mathbb{Z}) $ acts on $ E[n] $ and since for every $ \gamma \in GL_2(\mathbb{Z}/n\mathbb{Z}) $ we have $ e_n(\gamma(P,Q)) = e_n(P,Q)^{\det \gamma} $,
the group $ SL_2(\mathbb{Z}/n\mathbb{Z}) $ acts on $ Y(n) $ on the right in the following way: 
$$ \begin{pmatrix} a&b\\c&d \end{pmatrix} \cdot(E,(P,Q)) = (E,(aP+cQ,bP+dQ)).  $$
Define:  
 $$ C'_{ns}(n) :=  C_{ns}(n) \cap SL_2(\mathbb{Z}/n\mathbb{Z}), $$  
$$ C'^+_{ns}(n) :=  C^+_{ns}(n) \cap SL_2(\mathbb{Z}/n\mathbb{Z}), $$ 
$$ \Gamma_{ns}(n) := \{M \in SL_2({\mathbb{Z}}) \mbox{ such that }  M \equiv M'\mbox{ mod } n\ \mbox{for some }  M' \in  C'_{ns}(n)\},   $$
$$ \Gamma^+_{ns}(n) := \{M \in SL_2({\mathbb{Z}}) \mbox{ such that }  M \equiv M'\mbox{ mod } n\ \mbox{for some }  M' \in  C'^+_{ns}(n)\}.   $$

\noindent A possible explicit description for these groups is: 
$$ C_{ns}(p^k)= \left\{M_s=\begin{pmatrix} a&b\\\epsilon b&a \end{pmatrix} \in GL_2({\mathbb{Z}/p^k \mathbb{Z}})  \mbox{ with } s=a+\sqrt{\epsilon}b \in (\mathbf{O}_K / p^k \mathbf{O}_K)^* \right\}, $$
$$ C^+_{ns}(p^k) = \left\langle {\begin{pmatrix} a&b\\\epsilon b&a \end{pmatrix}} \in C_{ns}(p^k) \mbox{ , } {C=\begin{pmatrix} 1&0\\ 0&-1 \end{pmatrix}}    \right\rangle. $$

\noindent If $ s  \in (\mathbf{O}_K / p^k \mathbf{O}_K)^* $ we define $ |s|:= s \bar{s} \in (\mathbb{Z} / p^k \mathbb{Z})^*$ where $ \bar{s} $ is the conjugate of $ s $. So we have:
$$ C'_{ns}(p^k)= \left\{M_s=\begin{pmatrix} a&b\\\epsilon b&a \end{pmatrix} \in C_{ns}(p^k)  \mbox{ such that } |s|=|a+\sqrt{\epsilon}b|=1 \mbox{ mod } p^k \right\},  $$
$$  C'^+_{ns}(p^k) =  C'_{ns}(p^k) \cup \left\{M_s C=\begin{pmatrix} a&-b\\\epsilon b&-a \end{pmatrix} \mbox{ with } |s|=|a+\sqrt{\epsilon}b|=-1 \mbox{ mod } p^k \right\}.   $$

Points in $ Y_{ns}(n) $ are nothing but orbits of $Y(n)$ under the action of $ C'_{ns}(n) $ and similarly for $ Y_{ns}^+(n) $ and $ C'^+_{ns}(n) $. The above-mentioned action extends uniquely to $ X(n) $. The quotients $ X_{ns}(n) $ and $ X^+_{ns}(n) $ are  isomorphic as Riemann surfaces to $ \mathbb{H}^*/\Gamma_{ns}(n) $ and $ \mathbb{H}^*/\Gamma^+_{ns}(n) $ respectively.  

Using the identification of the cusps of $ X(p^k) $ with the elements of $ C(p^k)/\{\pm I\}$ explained in the previous section we obtain a shorter proof of the first claim of \cite[Proposition 7.10]{BB}:    

\begin{prop}\label{numerocuspidi} We identify the cusps of $ X_{ns}(p^k) $ with $ (\mathbb{Z}/p^k\mathbb{Z})^* $ and the cusps of $ X^+_{ns}(p^k) $ with $ H=(\mathbb{Z}/p^k\mathbb{Z})^*/\{\pm 1\} $. So $ X_{ns}(p^k) $ has $ p^{k-1}(p-1) $ cusps and $ X^+_{ns}(p^k) $ has $ p^{k-1}\frac{p-1}{2} $ cusps.  
\end{prop}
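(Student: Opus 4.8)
The plan is to identify the cusps of $X(p^k)$ with $C(p^k)/\{\pm I\}$, as established in the previous section, and then pass to the quotients by the subgroups $C'_{ns}(p^k)$ and $C'^+_{ns}(p^k)$ acting on this set. First I would recall that the cusps of $X(p^k)$ correspond to the orbits of the right action of $C'_{ns}(p^k)$ on $C(p^k)/\{\pm I\}$: concretely, a cusp $\sigma^{-1}(\infty)$ is moved by $\gamma \in SL_2(\mathbb{Z}/p^k\mathbb{Z})$ and two cusps of $X(p^k)$ give the same cusp of $X_{ns}(p^k)$ precisely when they lie in the same $C'_{ns}(p^k)$-orbit. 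So the set of cusps of $X_{ns}(p^k)$ is $\big(C(p^k)/\{\pm I\}\big)/C'_{ns}(p^k)$, and similarly for the $+$-curve one further quotients by the extra generator $C$, i.e. by $C'^+_{ns}(p^k)/C'_{ns}(p^k) \cong \mathbb{Z}/2\mathbb{Z}$.

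Next I would make this quotient explicit using the coordinates $C_{ns}(p^k) \cong (\mathbf{O}_K/p^k\mathbf{O}_K)^*$. Since $C(p^k)$ is (via $C_{ns}$) identified with $(\mathbf{O}_K/p^k\mathbf{O}_K)^*$ and the cusp at $\infty$ corresponds to the coset of a primitive vector, the orbit map sends an element $t \in (\mathbf{O}_K/p^k\mathbf{O}_K)^*/\{\pm 1\}$ to $|t| = t\bar t \in (\mathbb{Z}/p^k\mathbb{Z})^*$: the point is that $C'_{ns}(p^k)$ consists exactly of the $M_s$ with $|s|=1$, so the orbit of $t$ under right multiplication by such elements is the fiber of the norm map $t \mapsto t\bar t$. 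Hence the cusps of $X_{ns}(p^k)$ are in bijection with the image of the norm $(\mathbf{O}_K/p^k\mathbf{O}_K)^* \to (\mathbb{Z}/p^k\mathbb{Z})^*$, which is surjective (e.g. because the norm $\mathbb{F}_{p^2}^* \to \mathbb{F}_p^*$ is surjective and one lifts by Hensel/the structure of Proposition \ref{strutturanonsplit}), giving exactly $(\mathbb{Z}/p^k\mathbb{Z})^*$, of cardinality $p^{k-1}(p-1)$. For the normalizer, the extra element $C$ acts on $(\mathbf{O}_K/p^k\mathbf{O}_K)^*$ by $t \mapsto \bar t$ (conjugation), but $|\bar t| = |t|$, so after identifying cusps of $X_{ns}(p^k)$ with $(\mathbb{Z}/p^k\mathbb{Z})^*$ one still has to quotient by the residual action — which, tracing through the identification $g_\alpha = g_{e_1\alpha}$ and Proposition \ref{divisori}, turns out to be $a \mapsto -a$ on $(\mathbb{Z}/p^k\mathbb{Z})^*$. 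Thus the cusps of $X^+_{ns}(p^k)$ are identified with $H = (\mathbb{Z}/p^k\mathbb{Z})^*/\{\pm 1\}$, of cardinality $\tfrac12 p^{k-1}(p-1)$.

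Alternatively — and this is probably the cleaner route to write down — I would avoid the orbit bookkeeping and instead use the standard description of cusps of $X_\Gamma$ as $\Gamma \backslash \mathbb{P}^1(\mathbb{Q})$, i.e. as $\Gamma_{ns}(p^k) \backslash SL_2(\mathbb{Z}) / \Gamma_\infty$ where $\Gamma_\infty$ is the stabilizer of $\infty$. Reducing mod $p^k$, this becomes $C'_{ns}(p^k) \backslash SL_2(\mathbb{Z}/p^k\mathbb{Z}) / \overline{\Gamma_\infty}$, and since $SL_2(\mathbb{Z}/p^k\mathbb{Z})/\overline{\Gamma_\infty}$ is the set of primitive row vectors up to sign, one is counting $C'_{ns}(p^k)$-orbits of primitive vectors in $(\mathbb{Z}/p^k\mathbb{Z})^2$ modulo $\pm 1$; under $(\mathbf{O}_K/p^k\mathbf{O}_K)^* \curvearrowright \mathbf{O}_K/p^k\mathbf{O}_K$ the primitive vectors are the units, $C_{ns}(p^k)$ acts simply transitively on them, and the $C'_{ns}$-orbits are the norm-one cosets, recovering $(\mathbb{Z}/p^k\mathbb{Z})^*$; then the $+$-quotient adds the involution $t \mapsto \bar t$ which descends to $\pm 1$ on $(\mathbb{Z}/p^k\mathbb{Z})^*$.

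The main obstacle I anticipate is bookkeeping the sign/involution carefully in the last step: one must check that the extra involution coming from the normalizer — conjugation by $C$, which on $(\mathbf{O}_K/p^k\mathbf{O}_K)^*$ is Galois conjugation $t \mapsto \bar t$ — really does descend, under the norm identification of $X_{ns}$-cusps with $(\mathbb{Z}/p^k\mathbb{Z})^*$, to the map $a \mapsto -a$ rather than to the trivial map (which would halve the count incorrectly) or to something with fixed points that would change the orbit count. This requires tracking how the identification "cusp $\leftrightarrow \sigma^{-1}(\infty)$" interacts with right multiplication versus the conjugation action, using $\phi_c(\sqrt\epsilon) = -\sqrt\epsilon$ and the relation $M_sC = CM_{\bar s}$. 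Once the involution is pinned down and shown to act freely on $(\mathbb{Z}/p^k\mathbb{Z})^*$ (which is automatic since $-1 \not\equiv 1$ for $p$ odd), the cusp counts $p^{k-1}(p-1)$ and $\tfrac12 p^{k-1}(p-1)$ follow immediately, matching the first claim of \cite[Proposition 7.10]{BB}.
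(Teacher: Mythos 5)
Your first route is essentially the paper's own proof: identify the cusps of $X(p^k)$ with $C(p^k)/\{\pm I\}\cong(\mathbf{O}_K/p^k\mathbf{O}_K)^*/\{\pm 1\}$, observe that the $C'_{ns}(p^k)$-orbits are exactly the fibers of the norm $t\mapsto t\bar t$, and then quotient by the extra involution coming from the normalizer. The one point you flag as the ``main obstacle'' and leave unresolved is settled in a single line, and your phrasing of it is slightly off: the residual involution is not bare conjugation $t\mapsto\bar t$ (which is norm-preserving and would leave the cusp count of $X_{ns}(p^k)$ unchanged), but rather $t\mapsto s'\,\bar t$ where $M_{s'}C$ with $|s'|=-1$ represents the nontrivial coset of $C'^{+}_{ns}(p^k)/C'_{ns}(p^k)$; since $|s'\,\bar t|=|s'|\,|t|=-|t|$, the induced map on $(\mathbb{Z}/p^k\mathbb{Z})^*$ is $a\mapsto -a$, which is fixed-point-free for $p$ odd, and the counts $p^{k-1}(p-1)$ and $\tfrac12 p^{k-1}(p-1)$ follow exactly as you state. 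With that correction your argument coincides with the paper's.
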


\begin{proof}
We identify the cusps of $ X(p^k) $ with the elements of $ C(p^k)/\{\pm I\} \cong C_{ns}(p^k)/\{\pm I\} \cong (\mathbf{O}_K / p^k \mathbf{O}_K)^*/\{\pm 1\}  $. Bearing this in mind, it is clear that $ \pm M_r, \pm M_{r'} \in C_{ns}(p^k)/\{\pm I\} $ represent the same cusp in $ X_{ns}(p^k) $ if and only if there exists $ s \in  (\mathbf{O}_K / p^k \mathbf{O}_K)^* $ with $ |s|=1 $ such that $ \pm r=\pm s r' $. But this is equivalent to say that $ |r|=|sr'|=|r'| $ or  $ \det M_r = \det M_{r'}  \mod p^k$  and consequently we may identify the cusps of  $ X_{ns}(p^k) $ with $ (\mathbb{Z}/p^k\mathbb{Z})^* $. For the same reason $ \pm M_r, \pm M_{r'} \in C_{ns}(p^k)/\{\pm I\} $ are indistinguishable in $ X^+_{ns}(p^k) $ if and only if they were already indistinguishable in  $ X_{ns}(p^k) $ or there exists $ s' \in  (\mathbf{O}_K / p^k \mathbf{O}_K)^*  $ with $ |s'|=-1 $ such that $\pm r = \pm s'\overbar{r'} $  that is equivalent to say $ |r|=|s'\overbar{r'}|=-|r'| $ or $ \det M_r = - \det M_{r'}  \mod p^k$. In conclusion we may identify the cusps of  $ X^+_{ns}(p^k) $ with $ H=(\mathbb{Z}/p^k\mathbb{Z})^*/\{\pm 1\} $.  
\end{proof}  
 
\noindent
Furthermore, we can deduce that the covering $ \pi: X_{ns}(p^k) \to  X(p^k) $ is not ramified above the cusps. So the ramification degree of a cusp of $ X_{ns}(p^k) $ under the covering projection $ \pi':  X_{ns}(p^k) \to SL_2(\mathbb{Z})\textbackslash \mathbb{H}^* $, is equal to the one of a cusp of $ X(p^k) $ respect to $ \pi'':  X(p^k) \to SL_2(\mathbb{Z})\textbackslash \mathbb{H}^* $ that is $ p^k $. The same happens for $X^+_{ns}(p^k)  $.  

\section{Modular units on non-split Cartan curves}
Let $ t \in ((\mathbf{O}_K / p^k \mathbf{O}_K)^*/\{\pm 1\}) $: write it in the form $ t=a_1 + \sqrt{\epsilon}a_2 $ choosing $ a_1, a_2 \in \mathbb{Z} $ such that $ 0 \le a_1 \le \frac{p^k-1}{2} $, $ 0 \le a_2 \le p^k-1$ and $a_2 \le \frac{p^k-1}{2}$   if   $ a_1=0$. Define: 
$$ [t] := \frac{1}{p^k}(a_1,a_2). $$
If $ s \in (\mathbf{O}_K / p^k \mathbf{O}_K)^* $ we define $ [s]:=[\{\pm s\}] $.
Notice that if $ s,t \in (\mathbf{O}_K / p^k \mathbf{O}_K)^*  $, $ |s|=1 $ and $\gamma_s \in \Gamma_{ns}(p^k) $ lifts $M_s $ we have: 
 $$ [t] \gamma_s - [ts] \in \mathbb{Z}^2 \mbox{ or }   [t] \gamma_s + [ts] \in \mathbb{Z}^2 . $$ 
\noindent  
Analogously if $ |s|=-1 $ and $ \gamma  $ lifts $ M_sC $ to $ \Gamma^+_{ns}(p^k) $ we have:
 $$ [t] \gamma - [\overbar{ts}] \in \mathbb{Z}^2 \mbox{ or }   [t] \gamma + [\overbar{ts}] \in \mathbb{Z}^2. $$ 
These relations together with Proposition \ref{Kleinforms} imply:

\begin{prop}\label{indici} 
 The Klein forms: $ \mathfrak{k}_{[t]\gamma_s}(\tau)  $ and $ \mathfrak{k}_{[ts]}(\tau)  $ up to a $ 2p^k-$th root of unity represent the same function in the sense that:
 $$  \mathfrak{k}_{[t]\gamma_s}(\tau) = c \mathfrak{k}_{[ts]}(\tau)  $$
for some $ c \in \bm{\mu_{2p^k}} $.
 Similarly, for the Klein forms $ \mathfrak{k}_{[t]\gamma}(\tau)  $ and  $ \mathfrak{k}_{[\overbar{ts}]}(\tau)  $ we have:
  $$ \mathfrak{k}_{[t]\gamma}(\tau) = c' \mathfrak{k}_{[\overbar{ts}]}(\tau)  $$
 for some $ c' \in \bm{\mu_{2p^k}} $. 
\end{prop}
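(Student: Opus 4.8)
The plan is to deduce this directly from the two $\mathbb{Z}^2$-membership relations displayed just above the statement, together with parts (1) and (2) of Proposition \ref{Kleinforms}; no use of the $SL_2$-transformation law (3) is needed, since both sides are evaluated at the same $\tau$ and their indices differ only by an integer vector up to sign. First I would fix $t,s \in (\mathbf{O}_K/p^k\mathbf{O}_K)^*$ with $|s|=1$ and a lift $\gamma_s \in \Gamma_{ns}(p^k)$ of $M_s$. By the relation $[t]\gamma_s - [ts] \in \mathbb{Z}^2$ or $[t]\gamma_s + [ts] \in \mathbb{Z}^2$ I may write $[t]\gamma_s = \delta\,[ts] + b$ with $\delta \in \{\pm 1\}$ and $b = (b_1,b_2) \in \mathbb{Z}^2$. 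Note that $[ts] = \tfrac{1}{p^k}(c_1,c_2)$ with $(c_1,c_2)$ primitive (because $ts$ is a unit), so in particular $\delta[ts] \notin \mathbb{Z}^2$ and Proposition \ref{Kleinforms}(2) does apply to it.

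Next I would compute, using Proposition \ref{Kleinforms}(1)--(2): $\mathfrak{k}_{[t]\gamma_s}(\tau) = \mathfrak{k}_{\delta[ts]+b}(\tau) = \epsilon(\delta[ts],b)\,\mathfrak{k}_{\delta[ts]}(\tau) = \delta\,\epsilon(\delta[ts],b)\,\mathfrak{k}_{[ts]}(\tau)$, so that $c := \delta\,\epsilon(\delta[ts],b)$. It then remains only to verify $c \in \bm{\mu_{2p^k}}$. Substituting $a = \delta[ts]$ in the explicit formula $\epsilon(a,b) = (-1)^{b_1b_2+b_1+b_2}e^{-\pi i(b_1a_2-b_2a_1)}$ gives $\epsilon(\delta[ts],b) = (-1)^{b_1b_2+b_1+b_2}\,e^{-\pi i\,\delta\,(b_1c_2-b_2c_1)/p^k}$. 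Since $b_1c_2-b_2c_1 \in \mathbb{Z}$ and $\zeta := e^{\pi i/p^k}$ is a primitive $2p^k$-th root of unity, the exponential factor equals $\zeta^{-\delta(b_1c_2-b_2c_1)} \in \bm{\mu_{2p^k}}$; the remaining sign factors $(-1)^{b_1b_2+b_1+b_2}$ and $\delta$ lie in $\{\pm 1\} = \langle \zeta^{p^k}\rangle \subset \bm{\mu_{2p^k}}$. Hence $c \in \bm{\mu_{2p^k}}$, which proves the first assertion.

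For the second assertion I would repeat verbatim the same argument, now with $|s| = -1$ and $\gamma$ a lift of $M_sC$ to $\Gamma^+_{ns}(p^k)$: the relation $[t]\gamma - [\overbar{ts}] \in \mathbb{Z}^2$ or $[t]\gamma + [\overbar{ts}] \in \mathbb{Z}^2$ lets me write $[t]\gamma = \delta'[\overbar{ts}] + b'$ with $\delta' \in \{\pm 1\}$, $b' \in \mathbb{Z}^2$, and the identical computation yields $\mathfrak{k}_{[t]\gamma}(\tau) = c'\,\mathfrak{k}_{[\overbar{ts}]}(\tau)$ with $c' = \delta'\epsilon(\delta'[\overbar{ts}],b') \in \bm{\mu_{2p^k}}$. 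I do not anticipate any genuine obstacle here; the only thing needing a little care is the bookkeeping of the two sources of sign — the factor $\delta$ coming from Proposition \ref{Kleinforms}(1) and the factor $(-1)^{b_1b_2+b_1+b_2}$ inside $\epsilon$ — and checking that the rational exponent $\pi(b_1c_2-b_2c_1)/p^k$ together with these $\pm 1$'s really lands in $\bm{\mu_{2p^k}}$ rather than in some larger group of roots of unity.
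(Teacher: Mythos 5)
Your argument is correct and is exactly the one the paper intends: the paper simply asserts that the displayed relations $[t]\gamma_s \mp [ts] \in \mathbb{Z}^2$ (resp.\ $[t]\gamma \mp [\overbar{ts}] \in \mathbb{Z}^2$) ``together with Proposition \ref{Kleinforms} imply'' the claim, and you have filled in precisely that computation using parts (1) and (2), with the correct bookkeeping showing $\delta\,\epsilon(\delta[ts],b)\in\bm{\mu_{2p^k}}$. No discrepancy with the paper's approach.
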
 

\noindent
 For $ h \in (\mathbb{Z}/p^k\mathbb{Z})^* $ we define the following complex-valued functions on $ \mathbb{H} $:
$$ T_h (\tau) := \prod_{t \in ((\mathbf{O}_K / p^k \mathbf{O}_K)^*/\{\pm 1\}) , |t|=h}\mathfrak{k}_{[t]}(\tau),  $$ 
$$ G_h(\tau) := T_h(\tau)(\Delta(\tau))^{p^{k-1}\frac{p+1}{24}} = \prod_{t \in ((\mathbf{O}_K / p^k \mathbf{O}_K)^*/\{\pm 1\}) , |t|=h}g_{[t]}(\tau).  $$
   
\noindent
For $ h \in (\mathbb{Z}/p^k\mathbb{Z})^*/\{ \pm 1 \} $ consider:
$$ T^+_h (\tau) := \prod_{t \in ((\mathbf{O}_K / p^k \mathbf{O}_K)^*/\{\pm 1\}) , \pm|t|= h}\mathfrak{k}_{[t]}(\tau), $$  
$$ G^+_h(\tau) := T^+_h(\tau)(\Delta(\tau))^{p^{k-1}\frac{p+1}{12}} = \prod_{t \in ((\mathbf{O}_K / p^k \mathbf{O}_K)^*/\{\pm 1\}) , \pm|t|= h}g_{[t]}(\tau) .$$

\begin{prop}\label{FunzioniG} Let $ p \not= 2,3 $ a prime. Consider:

 $$ g(\tau) = \prod_{x \in  ((\mathbf{O}_K / p^k \mathbf{O}_K)^*/\{\pm 1\}) )  } g_{[x]}^{m(x)}(\tau)   $$ 
\noindent   
and suppose that it is a modular unit on $ X(p^k) $ (or equivalently that it satisfies the conditions of Theorem \ref{unitàmodulari}). If $ g(\tau) $ is a modular unit on $ X_{ns}(p^k) $ there exist integers $ \{n_h\}_{h \in (\mathbb{Z}/p^k\mathbb{Z})^* }$ such that: 
$$ g(\tau)= \prod_{h \in (\mathbb{Z}/p^k\mathbb{Z})^* }G_h^{n_h}(\tau) .$$

\noindent
Similarly, if the function $ g(\tau) $ is a modular unit on $ X^+_{ns}(p^k) $, there exist integers $ \{n^{+}_h\}_{h \in (\mathbb{Z}/p^k\mathbb{Z})^*/\{\pm 1 \}}$ such that:
$$ g(\tau)= \prod_{h \in  ((\mathbb{Z} / p^k \mathbb{Z})^*/\{\pm 1\}) }{{G^+_h}^{n^+_h}(\tau)} .$$ 
   \end{prop}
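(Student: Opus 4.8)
The plan is to show that, once $g=\prod_{x}g_{[x]}^{m(x)}$ descends to $X_{ns}(p^k)$, the exponent function $m$ is forced to be constant on the fibres of the norm map $x\mapsto|x|$, and that once $g$ descends further to $X^+_{ns}(p^k)$ it is in addition invariant under $h\mapsto -h$; since by construction $G_h=\prod_{|t|=h}g_{[t]}$ and $G^+_h=G_hG_{-h}$, grouping the factors of $g$ accordingly gives the two displayed formulas.

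I would first reduce $\Gamma_{ns}(p^k)$--invariance to its generators. As $g\in F_{p^k}$ it is automatically fixed (modulo constants) by $\pm\Gamma(p^k)$, and $\Gamma_{ns}(p^k)$ is generated by $\Gamma(p^k)$ together with lifts $\gamma_s$ of the matrices $M_s\in C'_{ns}(p^k)$, i.e.\ those with $|s|=1$; hence ``$g$ is a modular unit on $X_{ns}(p^k)$'' means exactly that every $g\circ\gamma_s$ is a constant multiple of $g$. Since $\Delta^{1/12}$ is $SL_2(\mathbb{Z})$--invariant up to a $12$th root of unity, Proposition~\ref{Kleinforms}(3) and Proposition~\ref{indici} give $g_{[x]}\circ\gamma_s=(\mathrm{const})\,g_{[xs]}$, so, reindexing $y=xs$,
$$ g\circ\gamma_s=(\mathrm{const})\prod_{x}g_{[xs]}^{m(x)}=(\mathrm{const})\prod_{y}g_{[y]}^{\,m(ys^{-1})}, $$
and the hypothesis forces $\prod_{y}g_{[y]}^{\,m(ys^{-1})-m(y)}$ to be constant for every $s$ with $|s|=1$.

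The crux is then to determine all multiplicative relations (modulo constants) among the primitive Siegel functions $\{g_{[x]}\}_{x\in(\mathbf{O}_K/p^k\mathbf{O}_K)^*/\{\pm1\}}$. I claim there is exactly one, namely $\prod_{x}g_{[x]}=\mathrm{const}$. That it is a relation follows from Proposition~\ref{divisori}: identifying $g_{[x]}$ with $g_{M_x}$,
$$ \sum_{x}\mbox{div}\,g_{[x]}^{12p^k}=6p^{2k}\sum_{\beta}\Bigl(\sum_{x}B_2\bigl(\langle T(M_x\beta^{-1})/p^k\rangle\bigr)\Bigr)\beta, $$
and for each fixed $\beta$ the inner sum runs over all of $C(p^k)/\{\pm I\}$, hence is independent of $\beta$; thus the divisor is a multiple of $\sum_{\beta}\beta$ and, being of degree $0$, it vanishes. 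That there are no other relations I would get from a rank count: by Theorem~\ref{mandrinf} the group of modular units of $X(p^k)$ modulo constants has rank one less than the number of cusps, which matches the number of primitive Siegel functions; equivalently, one may invoke the non--vanishing $B_{2,\chi}\neq0$ for $\chi\neq1$ that underlies the class number formula in Theorem~\ref{CDCG}. Consequently the integer vector $\bigl(m(ys^{-1})-m(y)\bigr)_y$ is an integer multiple $c_s(1,\dots,1)$ of the all--ones vector; summing over $y$ and using that $y\mapsto ys^{-1}$ permutes the index set gives $c_s=0$. Hence $m(ys^{-1})=m(y)$ whenever $|s|=1$, and as the norm--one elements act transitively on each fibre, $m(x)$ depends only on $h=|x|$; setting $n_h:=m(x)$ for $|x|=h$ yields $g=\prod_{h}G_h^{n_h}$.

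For $X^+_{ns}(p^k)$, such a $g$ is in particular a modular unit on $X_{ns}(p^k)$, so $g=\prod_{h}G_h^{n_h}$; moreover it is fixed modulo constants by a lift $\gamma$ of some $M_sC$ with $|s|=-1$. By the second relation of Proposition~\ref{indici}, $g_{[t]}\circ\gamma=(\mathrm{const})\,g_{[\overline{ts}]}$, and reindexing $y=\overline{xs}$ (a bijection of $(\mathbf{O}_K/p^k\mathbf{O}_K)^*/\{\pm1\}$ with $|y|=-|x|$) gives $g\circ\gamma=(\mathrm{const})\prod_{y}g_{[y]}^{\,n_{-|y|}}$; comparing with $g=\prod_{y}g_{[y]}^{\,n_{|y|}}$ and using the uniqueness of the relation once more, $n_{-h}-n_h$ is independent of $h$ and hence $0$ after summing, so $n$ descends to a function $n^+$ on $H=(\mathbb{Z}/p^k\mathbb{Z})^*/\{\pm1\}$. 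Since $G^+_h=G_hG_{-h}$ one concludes $\prod_{h\in H}(G^+_h)^{n^+_h}=\prod_{h\in(\mathbb{Z}/p^k\mathbb{Z})^*}G_h^{n_h}=g$. The one genuinely substantial point in all this is the uniqueness of the multiplicative relation among the primitive Siegel functions; everything else is bookkeeping with the cusp parametrisation of Proposition~\ref{numerocuspidi} and the transformation formulas of Propositions~\ref{Kleinforms} and~\ref{indici}.
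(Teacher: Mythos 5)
Your argument is correct and follows the same skeleton as the paper's proof: reduce invariance under $\Gamma_{ns}(p^k)$ (resp.\ $\Gamma^+_{ns}(p^k)$) to the action of lifts of generators of $C'_{ns}(p^k)$ and of a coset representative $M_sC$, use Proposition \ref{Kleinforms}(3) and Proposition \ref{indici} to turn $g\circ\gamma$ into a reindexed power product, and then conclude that the difference of exponent vectors must vanish. The one place where you genuinely diverge is the key lemma. The paper simply cites Kubert--Lang's \emph{independence of Siegel functions} (\cite[p.42 or p.120]{KL}): a product $\prod g_{[x]}^{l(x)}$ over primitive indices is constant if and only if all $l(x)$ coincide. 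You instead derive the (weaker, but sufficient) statement that the lattice of exponent vectors yielding constants has rank one and is rationally spanned by the all-ones vector, by combining three ingredients: the explicit relation $\operatorname{div}\prod_x g_{[x]}^{12p^k}=0$ (via Proposition \ref{divisori} and the degree-zero argument), the fact that the number of primitive indices modulo $\pm1$ equals the number of cusps, and the Manin--Drinfeld theorem, which forces the divisor map on the admissible exponent lattice to have image of rank one less than the number of cusps. This is a legitimate self-contained substitute for the cited black box; it does lean implicitly on the primitivity reduction in Theorem \ref{unit�modulari} for $k\ge 2$ (so that the primitive Siegel products really do generate all units modulo constants) and on the observation that the admissibility conditions are stable under the reindexing $x\mapsto xs$, both of which you should state if you write this up. Your handling of the $X^+_{ns}$ case is also slightly cleaner than the paper's: you get $n_{-h}-n_h$ constant directly and kill the constant by summing over $h$, whereas the paper manipulates the identity $m(\overline{xt})+m(\overline{x}rt)=m(x)+m(x\overline{rt^2})$ together with the norm computations $|rt\overline{t}^{-1}|=|\overline{rt^2}|=1$. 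Both routes are valid; the paper's is shorter given the citation, yours is more self-contained.
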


\begin{proof} 
We look for conditions on the exponents $ \{m(x)\}_{x \in   ((\mathbf{O}_K / p^k \mathbf{O}_K)^*/\{\pm 1\})} $ that guarantee:    
$$ \frac{g(\sigma^{-1}(\tau))}{g(\tau)} \in \mathbb{C} \mbox{ for every } \sigma \in  \Gamma_{ns}(p^k) \mbox{ (respectively } \Gamma^+_{ns}(p^k) \mbox{)} . $$ 
\noindent
From Proposition \ref{Kleinforms}, assertion (3), the fact that $ \Delta(\tau) $ is weakly modular of weight 12 and that by hypotesis $ 12 $ divides $ \sum{m(x)} $ we have:
$$ g(\sigma^{-1}(\tau))  = (\Delta(\sigma^{-1} (\tau)))^{\frac{1}{12}\sum{m(x)}} \prod{\mathfrak{k}_{[x]}^{m(x)}(\sigma^{-1}(\tau))} =   $$
$$ = (\Delta(\tau))^{\frac{1}{12}\sum{m(x)}} \prod{\mathfrak{k}_{[x]\sigma^{-1}}^{m(x)}(\tau)}  . $$

\noindent
By Proposition \ref{strutturanonsplit}, $ C'_{ns}(p^k)$ is a cyclic group with $ (p+1)p^{k-1} $ elements. Let $ M_r $ be a generator where $ r $ is a generator of:
$$ \{ s \in (\mathbf{O}_K / p^k \mathbf{O}_K)^*  \mbox{ with } |s|= 1 \} .$$ 
\noindent 
Every $S \in C'^+_{ns}(p^k) \setminus C'_{ns}(p^k)  $ is of the form $ M_tC$ where $ t \in (\mathbf{O}_K / p^k \mathbf{O}_K)^*  $ and $ |t|=-1 $. Fix $ S $ and choose $ \gamma_r $ lifting $ M_r $ in $ \Gamma_{ns}(p^k) $ and $\gamma_t $ lifting $ M_t $ in $ GL_2(\mathbb{Z}) $ with $ \det \gamma_t=-1 $. Of course $ \gamma_tC $ lifts $ S $ in $ \Gamma^+_{ns}(p^k) $.
\\
For every $ j $ we have that:
$$ (([x] \mbox{ mod } \mathbb{Z}^2)/\{\pm 1\}) \longmapsto (([xr^j] \mbox{ mod } \mathbb{Z}^2)/\{\pm 1\})  \mbox{ and} $$ 
$$ (([x] \mbox{ mod } \mathbb{Z}^2)/\{\pm 1\}) \longmapsto (([\overbar{xr^jt}] \mbox{ mod } \mathbb{Z}^2)/\{\pm 1\}) $$

\noindent
are permutations  of the primitive elements in $ ((\frac{1}{p^k}\mathbb{Z})^2 \mod \mathbb{Z}^2 )/(\pm 1) $.

\noindent
 As a consequence of these observations and Proposition \ref{indici}, taking $ \sigma=(\gamma_r)^j $ we have:
$$ (\Delta(\tau))^{\frac{1}{12}\sum{m(x)}} \prod{\mathfrak{k}_{[x]\gamma_r^{-j}}^{m(x)}(\tau)} = (\Delta(\tau))^{\frac{1}{12}\sum{m(x)}} \prod{\mathfrak{k}_{[xr^j]\gamma_r^{-j}}^{m(xr^j)}(\tau)} =    $$    
$$ = c_j(\Delta(\tau))^{\frac{1}{12}\sum{m(x)}} \prod{\mathfrak{k}_{[x]}^{m(xr^j)}(\tau)} = c_j \prod{g_{[x]}^{m(xr^j)}(\tau)} ,$$
  
\noindent 
where $ \{c_j\}_j $ are $ 2p^k-$th roots of unity. Taking $ \sigma = (\gamma_r)^j \gamma_t C $ we obtain:
$$ (\Delta(\tau))^{\frac{1}{12}\sum{m(x)}} \prod{\mathfrak{k}_{[x]C\gamma_t^{-1} \gamma_r^{-j}}^{m(x)}(\tau)} = (\Delta(\tau))^{\frac{1}{12}\sum{m(x)}} \prod{\mathfrak{k}_{[\overbar{xr^jt}]C\gamma_t^{-1} \gamma_r^{-j}}^{m(\overbar{xr^jt})}(\tau)} = $$
$$  = d_j (\Delta(\tau))^{\frac{1}{12}\sum{m(x)}} \prod{\mathfrak{k}_{[x]}^{m(\overbar{xr^jt})}(\tau)} = d_j \prod{g_{[x]}^{m(\overbar{xr^jt})}(\tau)} ,$$

\noindent 
where $ \{d_j\}_j $ are  $ 2p^k-$th roots of unity. Consider the following expression:
$$  \frac{g(\gamma_r^{-1}(\tau))}{g(\tau)} = c_1 \prod{g^{m(xr)-m(x)}_{[x]}(\tau) } .$$
\noindent
By the independence of Siegel functions \cite[p.42 or p.120]{KL} a product $ \prod {g_{[x]}^{l(x)}}  $ is constant if and only if the exponents $ l(x)$ are all equal. So the previous quotient is constant if and only if:
 $$ a(xr^j)= m(xr^{j+1})-m(xr^j) \mbox{ satisfy } a(xr^j)=a(xr^l) \mbox{  } \mbox{ for all } j,l \in \mathbb{Z}.   $$ 
   
\noindent  
But $ (\gamma_r)^{\frac{p+1}{2} p^{k-1}} \equiv -I \mod p^k$ and $ r^{\frac{p+1}{2} p^{k-1}} = -1 \mod p^k $. So we have that 
$ \sum_{j=1}^{ \frac{p+1}{2} p^{k-1}}a(xr^j) =0 $ and consequently $ a(xr^j)=0 $ for every $ j $, which implies that $ m(xr^j)$ does not depend on $ j $. 
Since $g(\tau)$ is $ \Gamma(p^k)-$invariant and every element in $ \Gamma_{ns}(p^k) $ can be written in the form $ \gamma\gamma_r^j $ with $ \gamma \in  \Gamma(p^k) $, we conclude that if $ g(\sigma^{-1}(\tau))/g(\tau) \in \mathbb{C} \mbox{ for every } \sigma \in  \Gamma_{ns}(p^k)  $, this implies that if $ |x|=|y|$ then $ m(x)=m(y)$. For each $ h $ invertible mod $p^k $ choose $ x $ with $ |x|=h $, put $ n_h:=m(x)$ and the first claim follows. 

 Consider now:
 $$  \frac{g((C\gamma_t^{-1})(\tau))}{g(\tau)} = d_0 \prod{g^{m(\overbar{xt})-m(x)}_{[x]}(\tau) } .$$

\noindent
If this quotient is constant the exponent of $ g_{[x]}(\tau) $ is equal to the exponent of the Siegel function $ g_{[\overbar{x}rt]}(\tau) $. So:
$$ m(\overbar{xt})-m(x) = m(x\overbar{rt^2})-m(\overbar{x}rt)  $$
or equivalently:
$ m(\overline{xt}) + m(\overbar{x}rt) = m(x) + m(x\overbar{rt^2}) $.
But $ |rt\overbar{t^{-1}}|=1  $ so $ m(\overline{xt}) = m(\overbar{x}rt) $  and $ |\overbar{rt^2}|=1 $ so $  m(x) = m(x\overbar{rt^2}) $. Hence $ m(x)=m(\overbar{xt}) $ and observe that $ |x|=-|\overbar{xt}| $. So, in consideration of the previous result, we can conclude that $ g(\sigma^{-1}(\tau))/g(\tau) \in \mathbb{C} \mbox{ for every } \sigma \in  \Gamma^+_{ns}(p^k)  $  implies that if $ |x|=|y|$ or $ |x|=-|y| $ then $m(x)=m(y)$.
For every $ h \in (\mathbb{Z} / p^k \mathbb{Z})^*/\{\pm 1\} $ choose $ x  $ such that $ \pm|x|=h $ and define $ n^+_h := m(x) $ and the second claim follows. 
\end{proof}   

\begin{prop}\label{Somme} The product:
$$ \prod_{h \in (\mathbb{Z}/p^k\mathbb{Z})^* }T_h^{n_h}(\tau) $$
is a nearly holomorphic modular form for $ \Gamma(p^k) $ if and only if  $p $ divides $ \sum_h n_h h$. 
\end{prop}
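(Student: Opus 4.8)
The plan is to recognize the product as a product of Klein forms and then apply the criterion of Proposition~\ref{Kleinforms}(5). Writing $t = a_1 + \sqrt{\epsilon}\,a_2$ with $a_1, a_2$ normalized as in the definition of $[t]$, so that $[t] = \frac{1}{p^k}(a_1, a_2)$, and using that the fibres of $t \mapsto |t|$ partition $(\mathbf{O}_K/p^k\mathbf{O}_K)^*/\{\pm 1\}$, I would first rewrite
$$\prod_{h \in (\mathbb{Z}/p^k\mathbb{Z})^*} T_h^{n_h}(\tau) = \prod_{t \in ((\mathbf{O}_K/p^k\mathbf{O}_K)^*/\{\pm 1\})} \mathfrak{k}_{[t]}^{n_{|t|}}(\tau),$$
a finite product of Klein forms with primitive indices $[t] \in \frac{1}{p^k}\mathbb{Z}^2 \smallsetminus \mathbb{Z}^2$. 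Since $p^k \ge 3$ is odd, Proposition~\ref{Kleinforms}(5) says this is a nearly holomorphic modular form for $\Gamma(p^k)$ if and only if
$$\sum_t n_{|t|}\,a_1^2 \;\equiv\; \sum_t n_{|t|}\,a_2^2 \;\equiv\; \sum_t n_{|t|}\,a_1 a_2 \;\equiv\; 0 \pmod{p^k},$$
the sums being over $t \in (\mathbf{O}_K/p^k\mathbf{O}_K)^*/\{\pm 1\}$. So the statement reduces to evaluating these three sums.

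To evaluate them I would work inside the finite local ring $\mathbf{O}_K/p^k\mathbf{O}_K$ (local since $p$ is inert in $K$, with maximal ideal $p\,\mathbf{O}_K/p^k\mathbf{O}_K$ and residue field $\mathbb{F}_{p^2}$) and use the identities $4a_1^2 = \mathrm{Tr}(t^2) + 2|t|$, $4\epsilon\,a_2^2 = \mathrm{Tr}(t^2) - 2|t|$ and $4\sqrt{\epsilon}\,a_1 a_2 = t^2 - \overline{t}^2$, which come from $t + \overline{t} = 2a_1$ and $t - \overline{t} = 2\sqrt{\epsilon}\,a_2$. Grouping each sum by the value $h = |t|$ and noting that $\{t : |t| = h\}$ is a coset $t_0\,C'_{ns}(p^k)$ of the norm-one subgroup, everything comes down to computing $\sum_{u \in C'_{ns}(p^k)} u^2$ in $\mathbf{O}_K/p^k\mathbf{O}_K$. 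Here the non-split structure enters: by Proposition~\ref{strutturanonsplit}, $C'_{ns}(p^k)$ is cyclic of order $N = (p+1)p^{k-1}$, and if $r$ is a generator then $r \bmod p$ generates the norm-one subgroup $\mu_{p+1}$ of $\mathbb{F}_{p^2}^*$, so $r^2 \bmod p$ has order $(p+1)/2 > 1$; hence $r^2 - 1$ lies outside the maximal ideal and is a unit. From $(r^2 - 1)\sum_{l=0}^{N-1} r^{2l} = r^{2N} - 1 = 0$ we get $\sum_{u \in C'_{ns}(p^k)} u^2 = 0$, and therefore $\sum_{t : |t| = h} t^2 = t_0^2\sum_u u^2 = 0$; applying the trace and the conjugation then gives $\sum_{t : |t| = h}\mathrm{Tr}(t^2) = 0$ and $\sum_{t : |t| = h}(t^2 - \overline{t}^2) = 0$ for every $h$.

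Plugging this back in, the contributions of $\mathrm{Tr}(t^2)$ and of $t^2 - \overline{t}^2$ vanish; in particular the third congruence holds automatically, and, using that the norm $(\mathbf{O}_K/p^k\mathbf{O}_K)^* \to (\mathbb{Z}/p^k\mathbb{Z})^*$ is surjective with kernel $C'_{ns}(p^k)$ — so each fibre $\{t : |t| = h\}$ has $(p+1)p^{k-1}$ elements, and half that many modulo $\pm 1$ — one gets
$$4\sum_t n_{|t|}\,a_1^2 = 2\sum_t n_{|t|}\,|t| = (p+1)p^{k-1}\sum_{h} n_h h, \qquad 4\epsilon\sum_t n_{|t|}\,a_2^2 = -(p+1)p^{k-1}\sum_{h} n_h h.$$
Since $4$, $\epsilon$, $\sqrt{\epsilon}$ and $p+1$ are invertible modulo $p$, and $p^{k-1}X \equiv 0 \pmod{p^k}$ is equivalent to $X \equiv 0 \pmod{p}$, the first two congruences each hold if and only if $p \mid \sum_h n_h h$, while the third always does — which is exactly the claim.

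The step I expect to be the real obstacle is the vanishing $\sum_{u \in C'_{ns}(p^k)} u^2 = 0$ together with the coset and $\{\pm1\}$-quotient bookkeeping around it, since that is where cyclicity of $C'_{ns}(p^k)$ and the inertness of $p$ are genuinely used; the remainder is formal manipulation with the norm and trace plus an appeal to Proposition~\ref{Kleinforms}(5).
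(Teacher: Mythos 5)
Your proposal is correct and follows essentially the same route as the paper: reduce to the Kubert--Lang congruence criterion of Proposition~\ref{Kleinforms}(5) and evaluate the three quadratic sums over each norm-fibre by exploiting that the fibre is a coset of the cyclic norm-one subgroup, with the key vanishing coming from $\sum_i r^{2i}=0$ (the paper sums the geometric series where you invert $r^2-1$; your explicit check that $r^2-1$ is a unit is a point the paper leaves implicit). Your trace/norm identities are just a tidier packaging of the paper's direct expansion of $\bigl(\tfrac{1}{2}(r^i\alpha_h+\overline{r^i\alpha_h})\bigr)^2$, and the $\{\pm1\}$-bookkeeping and the final divisibility reduction match the paper's.
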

\begin{proof} First of all, for every $ h $ invertible mod $ p^k $:
$$ \mbox{ (1) } \sum_ { \begin{scriptsize} \begin{array}{c} \pm s \in ((\mathbf{O}_K / p^k \mathbf{O}_K)^*/\{\pm 1\})  \\  |\pm s|=h \end{array} \end{scriptsize} }\left(\frac{1}{2}(s+\overline{s})\right)^2 =  \frac{h}{4}(p+1)p^{k-1} \mod p^k,   $$
$$ \mbox{ (2) } \sum_{ \begin{scriptsize} \begin{array}{c} \pm s \in ((\mathbf{O}_K / p^k \mathbf{O}_K)^*/\{\pm 1\}
)  \\  |\pm s|=h \end{array} \end{scriptsize} }\left(\frac{1}{2\sqrt{\epsilon}}(s-\overline{s})\right)^2 = -  \frac{h}{4\epsilon}(p+1)p^{k-1} \mod p^k,   $$
$$ \mbox{ (3) } \sum_{ \begin{scriptsize} \begin{array}{c} \pm s \in ((\mathbf{O}_K / p^k \mathbf{O}_K)^*/\{\pm 1\})  \\  |\pm s|=h \end{array} \end{scriptsize} }\left(\frac{1}{2}(s+\overline{s})\right)\left(\frac{1}{2\sqrt{\epsilon}}(s-\overline{s})\right) = 0 \mod p^k .  $$
We prove only the first assertion because the other statements can be shown by the same argument . Every $ s \in (\mathbf{O}_K / p^k \mathbf{O}_K)^* $ with $ |s|= h $ can be written as $ s=r^i\alpha_h $ where $ r $ is a generator of the subgroup $ \{ t \in (\mathbf{O}_K / p^k \mathbf{O}_K)^*  \mbox{ with } |t|= 1 \} $ and $ \alpha_h $ are fixed elements such that $ |\alpha_h|=h $.
$$ \sum_{|\pm s|=h}(\frac{1}{2}(s+\overline{s}))^2 = \sum_{i=0}^{\frac{p+1}{2}p^{k-1}-1}(\frac{1}{2}(r^i\alpha_h +\overline{ r^i\alpha_h}))^2 = $$  $$ =
\displaystyle\frac{\alpha_h^2}{4}\sum_ {i=0}^{\frac{p+1}{2}p^{k-1}-1}(r^{2i}) +
 \displaystyle\frac{\overline{\alpha_h^2}}{4}\sum_ {i=0}^{\frac{p+1}{2}p^{k-1}-1}(r^{-2i}) + \frac {\alpha_h\overbar{\alpha_h}}{4} (p+1)p^{k-1}
$$ 
\noindent
and the assertion (1) follows because:
$$\displaystyle \sum_{i=0}^{\frac{p+1}{2}p^{k-1}-1}r^{2i}= 
\displaystyle \sum_{i=0}^{\frac{p+1}{2}p^{k-1}-1}r^{-2i} =
\displaystyle\frac{1- r^{(p+1)p^{k-1}} }{1-r^2} =0 \mod p^k  $$
\noindent 
To prove this proposition we apply Proposition \ref{Kleinforms} to the product $ \prod_{h}T_h^{n_h}(\tau) $. Considering that for every $ s= a_1+\sqrt{\epsilon}a_2 \in (\mathbf{O}_K / p^k \mathbf{O}_K)^* $ we have:
$$ p^k[ s] \equiv (a_1,a_2) \mbox{ mod } (p^k\mathbb{Z})^2 \mbox{ or } p^k[ s] \equiv -(a_1,a_2) \mbox{ mod } (p^k\mathbb{Z})^2 \mbox{   and:} $$ $$ (a_1,a_2) \equiv \left( \frac{1}{2}(s+\overline{s}),\frac{1}{2\sqrt{\epsilon}}(s-\overline{s}) \right)  \mbox{ mod }  (p^k\mathbb{Z})^2  $$
and reformulating condition (5) of Proposition \ref{Kleinforms} in terms of assertions (1),(2) and (3) we attain the desired result. 
\end{proof}  
 
From this proposition it follows immediately that the functions $ T^+_h({\tau}) $ are nearly holomorphic for $ \Gamma(p^k) $. We will examine them further in details.

For every $ s= \begin{pmatrix} a&b\\c&d \end{pmatrix} \in SL_2({\mathbb{Z}}) $ define:
$$ J_s(\tau)=(c\tau+d)^{-(p+1)p^{k-1}} , \tau \in \mathbb{H}. $$

\noindent 
\begin{prop}\label{piùmenodiedrale} 
For every prime $ p \equiv 3 $ mod $ 4 $,  for every $ h \in ((\mathbb{Z}/p^k\mathbb{Z})^*/\{\pm 1\}) $ and for every $ s \in \Gamma^+_{ns}(p^k) $ we have: 
$$ T^+_h(s(\tau))=J_s(\tau) T^+_h(\tau)  $$
in other words $ T^+_h(\tau) $ is a nearly holomorphic modular form for $ \Gamma^+_{ns}(p^k) $ of weight $-(p+1)p^{k-1}  $.\\
\noindent
If $ p \equiv 1 $ mod $ 4 $ and  $ s \in \Gamma_{ns}(p^k) $ we have:
$$ T^+_h(s(\tau))=J_s(\tau) T^+_h(\tau)  $$
in other words $ T^+_h(\tau) $ is a nearly holomorphic modular form for $ \Gamma_{ns}(p^k) $ of weight $ -(p+1)p^{k-1} $.\\
\noindent
If $ p \equiv 1 $ mod $ 4 $ and  $ s \in \Gamma^+_{ns}(p^k) \setminus \Gamma_{ns}(p^k) $ we have:
$$ T^+_h(s(\tau))= - J_s(\tau) T^+_h(\tau) .$$
\end{prop}

\begin{proof} 

It is clear from Proposition \ref{Kleinforms} that for every $ s\in \Gamma^+_{ns}(p^k) $ there exists a $ 2p^k-$th root of unity $ c $ such that:
$ T^+_h(s(\tau))=c  T^+_h(\tau)J_s(\tau) $ so it is natural to define: 
$$ C_h(s) = \displaystyle\frac{T^+_h(s(\tau))}{T^+_h(\tau)J_s(\tau)} \in \bm{\mu_{2p^k}}. $$
\noindent
On the one hand:
$$ T^+_h((ss')(\tau)) = C_h(ss')T^+_h(\tau)J_{ss'}(\tau),  $$
on the other hand:
$$  T^+_h(s(s'(\tau)))= C_h(s)T^+_h(s'(\tau))J_s(s'(\tau)) = C_h(s)C_h(s')J_s(s'(\tau))J_{s'}(\tau)T^+_h(\tau).  $$

\noindent
Considering that $ J_{ss'}(\tau) = J_s(s'(\tau))J_{s'}(\tau) $ we have:
$$ C_h(ss')=C_h(s)C_h(s'). $$ 
From Proposition \ref{Somme} we deduce easily that $ C_h(\pm\Gamma(p^k))=1 $ for every $ h $. So $C_h $ are characters of $ \Gamma^+_{ns}(p^k)/\pm \Gamma(p^k) $. Since this quotient is isomorphic to $ C'^+_{ns}(p^k)/\{\pm I\} $ and since for every $ \alpha \in  C'^+_{ns}(p^k) \setminus C'_{ns}(p^k) $ we have $ \alpha^2 = -I $, by Proposition \ref{strutturanonsplit} we obtain that $ \Gamma^+_{ns}(p^k)/\pm \Gamma(p^k)   $  
  is a dihedral group of $ (p+1)p^{k-1} $ elements. These observations entail \textit{ipso facto} that $ C_h(s) \in \{ \pm 1 \} $. As in Proposition \ref{FunzioniG} choose a matrix $ \gamma_r \in SL_2(\mathbb{Z}) $ lifting $ M_r \in C'_{ns}(p^k) $ where $ r $ generates the subgroup of $ (\mathbf{O}_K / p^k \mathbf{O}_K)^* $ of elements of norm 1.\\ Choose $ \gamma = \begin{pmatrix} a&b\\c&d \end{pmatrix} $ in $ \Gamma^+_{ns}(p^k) \setminus \Gamma_{ns}(p^k)$  . It is not restrictive to suppose that $ a=d \mbox{ mod } 2 $. If this did not happen we would alternatively choose: $$   \gamma = \begin{pmatrix} a&b\\c&d \end{pmatrix} \begin{pmatrix} 1&p^k\\0&1 \end{pmatrix} = \begin{pmatrix} a&{ap^k+b}\\c&{cp^k+d} \end{pmatrix}.$$
   If $ a\not\equiv d \mbox{ mod } 2$ then $ b $ and $ c $ are inevitably odd because $ ad-bc=1 $ so the new coefficients on the diagonal verify $ a \equiv cp^k+d \mbox{ mod } 2$. \\
Notice that  $  \{ \gamma\gamma_r^j\gamma^{-1} \mbox{, }  \gamma\gamma_r^j  \}_{j=1,...,\frac{p+1}   {2}(p^{k-1})}$ is a set of representatives of cosets for the quotient group  $ \Gamma^+_{ns}(p^k)/\pm \Gamma(p^k) $. Furthermore if  $ h \in (\mathbb{Z}/p^k\mathbb{Z})^*/\{\pm 1\} $ and  $s \in  (\mathbf{O}_K / p^k \mathbf{O}_K)^*/\{\pm 1\} $ with $ \pm|s|=h $, there exists a $ 2p^k-$th root of unity $ c' $ such that:

$$ T^+_h(\tau) = c' \prod_{j=1}^{\frac{p+1}{2}p^{k-1}} {\mathfrak{k}_{[s]\gamma\gamma_r^j\gamma^{-1}}}(\tau)\prod_{j=1}^{\frac{p+1}{2}p^{k-1}} {\mathfrak{k}_{[s]\gamma\gamma_r^j}}(\tau).  $$
\noindent
We calculate:
 $$ T^+_h(\gamma(\tau)) = c' J_{\gamma}(\tau)  \prod_{j=1}^{\frac{p+1}{2}p^{k-1}} {\mathfrak{k}_{[s]\gamma\gamma_r^j}}(\tau)\prod_{j=1}^{\frac{p+1}{2}p^{k-1}} {\mathfrak{k}_{[s]\gamma\gamma_r^j\gamma}}(\tau) =  $$
 $$ = c' (-1)^{\frac{p+1}{2}} J_{\gamma}(\tau)  \prod_{j=1}^{\frac{p+1}{2}p^{k-1}} {\mathfrak{k}_{[s]\gamma\gamma_r^j}}(\tau)\prod_{j=1}^{\frac{p+1}{2}p^{k-1}} {\mathfrak{k}_{-[s]\gamma\gamma_r^j\gamma}}(\tau)  .$$
\noindent 
But $ \gamma\gamma_r^j\gamma^{-1} \equiv -\gamma\gamma_r^j\gamma  $ mod $ p^k $ and $ \gamma^{-1}+\gamma $ (in agreement with the previous convention) has all even coefficients so:
$$ [s]\gamma\gamma_r^j\gamma^{-1} - (-[s]\gamma\gamma_r^j\gamma) = [s]\gamma\gamma_r^j( \gamma^{-1}+\gamma) \in (2\mathbb{Z})^2   $$
and considering Proposition \ref{Kleinforms} part (2) we have:
$$ \frac{{\mathfrak{k}_{-[s]\gamma\gamma_r^j\gamma}}(\tau) }{{\mathfrak{k}_{[s]\gamma\gamma_r^j\gamma^{-1}}}(\tau)}  \in \bm{\mu_{p^k}}. $$
Therefore: $$ C_h(\gamma) = (-1)^{\frac{p+1}{2}}\prod_{j=1}^{\frac{p+1}{2}p^{k-1}}\frac{{\mathfrak{k}_{-[s]\gamma\gamma_r^j\gamma}}(\tau) }{{\mathfrak{k}_{[s]\gamma\gamma_r^j\gamma^{-1}}}(\tau)},  $$ 

\noindent
so $ C_h(\gamma)(-1)^{\frac{p+1}{2}} \in \bm{\mu_{p^k}} \cap \{\pm 1\} $, we have necessarily $ C_h(\gamma)=(-1)^{\frac{p+1}{2}} $ for every $ \gamma \in \Gamma^+_{ns}(p^k) \setminus \Gamma_{ns}(p^k) $ and the proposition follows. 
  \end{proof}
 
\begin{thm}\label{powprod} If $ p \not= 2,3 $ the subgroup of modular units in $ F_{p^k} $ of $ X^+_{ns}(p^k) $ consists (modulo constants) of power products: 
$$  g(\tau)= \prod_{h \in  ((\mathbb{Z} / p^k \mathbb{Z})^*/\{\pm 1\}) }{{G^+_h}^{n^+_h}(\tau)} $$
where $ d=\displaystyle\frac{12}{\gcd(12,p+1)} \mbox{ divides }  \sum_{h}n^+_h $. 

\end{thm}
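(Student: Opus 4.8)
The plan is to combine the structural statement of Proposition~\ref{FunzioniG} with the classification of the units of $F_{p^k}$ (see \cite{Siegelgenerator} and \cite{KL}) and with the automorphy formulae for $T^+_h$ established just above. First I would record the reduction to power products: a modular unit on $X^+_{ns}(p^k)$, viewed inside $F_{p^k}$, is a modular unit on $X(p^k)$ (the covering $X(p^k)\to X^+_{ns}(p^k)$ carries cusps to cusps and non-cusps to non-cusps), so Proposition~\ref{FunzioniG} exhibits it, modulo a constant, as $g(\tau)=\prod_h{G^+_h}^{n^+_h}(\tau)$ for suitable integers $n^+_h$; it then remains only to decide which power products of this shape are modular units on $X^+_{ns}(p^k)$. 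Unwinding $G^+_h=T^+_h\,\Delta^{p^{k-1}(p+1)/12}$ and using that the norm map on $C_{ns}(p^k)$ has kernel of order $p^{k-1}(p+1)$ (Proposition~\ref{strutturanonsplit}), each $G^+_h$ is a product of exactly $p^{k-1}(p+1)$ Siegel functions $g_{[t]}$, so $g(\tau)=\prod_a g_a^{m_a}(\tau)$ with $\sum_a m_a=p^{k-1}(p+1)\sum_h n^+_h$; and the computations in the proof of Proposition~\ref{Somme} show that for these $\pm$-symmetrized families the three quadratic congruences required of a modular unit of $X(p^k)$ hold automatically.

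With those congruences in force, the classification of modular units of $X(p^k)$ says that $g$ is a unit of $F_{p^k}$ (modulo a constant) exactly when $12\mid\sum_a m_a=p^{k-1}(p+1)\sum_h n^+_h$; since $p\neq2,3$ gives $\gcd(12,p^{k-1})=1$, this is equivalent to $12\mid(p+1)\sum_h n^+_h$, i.e. to $d=\tfrac{12}{\gcd(12,p+1)}\mid\sum_h n^+_h$. In particular a modular unit on $X^+_{ns}(p^k)$ satisfies $d\mid\sum_h n^+_h$, which gives necessity. Conversely, assuming $d\mid\sum_h n^+_h$, the function $g$ is a genuine element of $F_{p^k}$ whose divisor is supported on the cusps of $X(p^k)$, and all that is left to verify is its $\Gamma^+_{ns}(p^k)$-invariance.

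For that I would write $g=\big(\prod_h{T^+_h}^{n^+_h}\big)\cdot\Delta^{p^{k-1}(p+1)\sum_h n^+_h/12}$. Because $12$ divides $p^{k-1}(p+1)\sum_h n^+_h$, the $\Delta$-power is a genuine integer power of $\eta^2$ with trivial multiplier, contributing under each $s\in SL_2(\mathbb{Z})$ the automorphy factor $J_s(\tau)^{-\sum_h n^+_h}$; by the automorphy formulae for $T^+_h$ proved above, $\prod_h{T^+_h}^{n^+_h}$ contributes $\pm J_s(\tau)^{\sum_h n^+_h}$ for $s\in\Gamma^+_{ns}(p^k)$, so the two factors cancel and $g(s(\tau))=\pm g(\tau)$, where the sign is $+1$ for all $s$ when $p\equiv3\pmod4$, and for $p\equiv1\pmod4$ equals $+1$ on $\Gamma_{ns}(p^k)$ and $(-1)^{\sum_h n^+_h}$ on $\Gamma^+_{ns}(p^k)\setminus\Gamma_{ns}(p^k)$. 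In the first case $g$ is $\Gamma^+_{ns}(p^k)$-invariant; in the second, $p+1\equiv2\pmod4$ forces $\gcd(12,p+1)\in\{2,6\}$, so $d\in\{6,2\}$ is even, and then $d\mid\sum_h n^+_h$ makes $\sum_h n^+_h$ even and the sign is again $+1$. Hence $g$ is $\Gamma^+_{ns}(p^k)$-invariant in all cases; since its divisor on $X^+_{ns}(p^k)$ is the image of its divisor on $X(p^k)$ it is supported on the cusps, and $g$ is a modular unit on $X^+_{ns}(p^k)$, completing the equivalence.

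The step I expect to be the main obstacle is this last one, together with the uniform treatment of the two residue classes of $p$ modulo $4$: one must notice that the sign character coming from the automorphy of $T^+_h$ is trivialized exactly by the divisibility $d\mid\sum_h n^+_h$ — an arithmetic coincidence, since $d$ happens to be even precisely when that sign is nontrivial — so that the single condition $d\mid\sum_h n^+_h$ describes the modular units uniformly in $p$. By contrast, the reduction to power products via Proposition~\ref{FunzioniG}, the factor count from Proposition~\ref{strutturanonsplit}, and the automatic vanishing of the quadratic congruences from Proposition~\ref{Somme} are routine once those results are in hand.
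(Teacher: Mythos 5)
Your proof is correct and follows essentially the same route as the paper: Proposition~\ref{FunzioniG} together with Theorem~\ref{unit�modulari} gives the shape of the units and translates $12\mid(p+1)p^{k-1}\sum_h n^+_h$ into $d\mid\sum_h n^+_h$, while Proposition~\ref{pi�menodiedrale} plus the observation that $d$ is even exactly when $p\equiv 1\bmod 4$ handles the converse. You simply spell out details (the factor count $\sum_a m_a=p^{k-1}(p+1)\sum_h n^+_h$ and the automatic quadratic congruences from Proposition~\ref{Somme}) that the paper leaves implicit.
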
  
 
\begin{proof}
By Proposition \ref{FunzioniG} and Theorem \ref{unitàmodulari}, every modular unit on $ X^+_{ns}(p^k) $ can be written in the above indicated way.
In fact, $ d|\sum_h{n^+_h} $ is equivalent to saying: $ 12|(p+1)p^{k-1}\sum_h{n^+_h} $.
\\By Proposition \ref{piùmenodiedrale} all the functions of this form are modular units of $ X^+_{ns}(p^k) $. In fact, if $ p \equiv 3 \mbox{ mod } 4 $, the functions $ T^+_h(\tau) $ are nearly holomorphic modular forms for $ X^+_{ns}(p^k) $. If $ p \equiv 1 \mbox{ mod } 4 $, even if the functions $ T^+_h(\tau) $ are not nearly holomorphic modular forms for $ X^+_{ns}(p^k) $, the product $ \prod_{h }{{T^+_h}^{n^+_h}(\tau)} $ has this property, because $ \sum_h{n^+_h} $ is even in this case.  
\end{proof} 

Notice that such a writing for $ g(\tau) $ is not unique because of the fact that the following product is constant: $$ \displaystyle
\prod_{h \in ((\mathbb{Z} / p^k \mathbb{Z})^*/\{\pm 1\})}{G^+_h(\tau)} = \prod_{t \in   (\mathbf{O}_K / p^k \mathbf{O}_K)^*/\{\pm 1\}}{g_{[t]}(\tau)}.$$

\begin{rem}\label{pseudoGalois} Let $g$ be a generator of $ (\mathbb{Z}/p^k\mathbb{Z})^* $. Choose $s  \in (\mathbf{O}_K / p^k \mathbf{O}_K)^* $ with $ |s|=g $ and denote with $ \rho \in Gal(F_{p^k},\mathbb{Q}(j))$ the automorphism corresponding to the matrix $ M_s $ respect to the isomorphism  $ Gal(F_{p^k},\mathbb{Q}(j)) \cong GL_2(\mathbb{Z}/p^k\mathbb{Z})/{\pm I} $  described in Theorem \ref{fricke}. Let $ F^+_{ns}(p^k) $ be the subfield of $ F_{p^k} $ fixed by $ C'^+_{ns}(p^k)/\pm I $.
Choose $\sigma \in  Gal(F_{p^k},\mathbb{Q}(j))  $.
From Galois theory we have:
$$  Gal(F_{p^k}, \sigma ( F^+_{ns}(p^k)) ) = \sigma Gal(F_{p^k}, F^+_{ns}(p^k) )\sigma^{-1}  $$
\noindent
thus saying that $ \sigma ( F^+_{ns}(p^k)) = F^+_{ns}(p^k)  $ amounts to say that $ \sigma $ belongs to the normalizer of $ C'^+_{ns}(p^k)/\pm I $, in other words we have: $\sigma \in C^+_{ns}(p^k) / \pm I $. Consider $ \sigma_1, \sigma_2 \in C^+_{ns}(p^k) / \pm I $. We have $ \sigma_1(f(\tau)) = \sigma_2 (f(\tau))  $ for every $ f(\tau) \in F^+_{ns}(p^k) $ if and only if $  \sigma_1\sigma_2^{-1} \in C'^+_{ns}(p^k)/\pm I $ or equivalently $ \det \sigma_1 = \det \sigma_2  $. So every automorphism $ \sigma\restriction_{F^+_{ns}(p^k)}: F^+_{ns}(p^k)\rightarrow F^+_{ns}(p^k)$ fixing $ \mathbb{Q}(j) $ can be written in the form $ \sigma = \rho^j $ for some $ 0 \le j \le \varphi(p^k)-1  $.  
 Notice that if $$ f(\tau)= \prod_{h \in  ((\mathbb{Z} / p^k \mathbb{Z})^*/\{\pm 1\})  }{{G^+_h}^{n^+_h}(\tau)} $$
 and $$ h(\tau)= \prod_{h \in  ((\mathbb{Z} / p^k \mathbb{Z})^*/\{\pm 1\})  }{{G^+_{h(\pm g)}}^{n^+_h}(\tau)} $$
\noindent are modular units on $ X^+_{ns}(p^k) $, from proposition \ref{sigfun} we have: 
$$ (\rho(f(\tau)))^{12p^k} = \rho(f(\tau)^{12p^k})= \rho \left(\prod_{h \in  ((\mathbb{Z} / p^k \mathbb{Z})^*/\{\pm 1\})  }{{G^+_h}^{12p^kn^+_h}(\tau)}\right) =$$  
$$=  \prod_{h \in  ((\mathbb{Z} / p^k \mathbb{Z})^*/\{\pm 1\})  }{{G^+_{h(\pm g)}}^{12p^kn^+_h}(\tau)} = (h(\tau))^{12p^k}. $$ 
So $ \rho(f(\tau)) = c h(\tau)$  for some $ c \in \mathbb{Q}(\zeta_{p^k})$ and all the functions $ \rho^j(f(\tau)) $ are modular units. Choosing $ j=  \frac{1}{2}  \varphi(p^k) $ we deduce that for every modular unit $ f(\tau) $ on  $ X^+_{ns}(p^k) $ there exist $  c' \in \mathbb{Q}(\zeta_{p^k}) $ such that:
$$ c'f(\tau) \in \mathbb{Q}\left(\cos \left(\frac{2\pi}{p^k}\right)\right)((q^{p^{-k}})) \mbox{ with } q=e^{2\pi i \tau}. $$ 

\end{rem} 
 
\section{Cuspidal Divisor Class Group of non-split Cartan curves} 
Let $ p \ge 5 $ a prime and let $  R = \mathbb{Z}[H]  $ be the group ring of $ H=(\mathbb{Z}/p^k\mathbb{Z} )^*/\{ \pm 1 \} $ over $  \mathbb{Z} $.
Let $ w $ be a generator of $ H $. For $ \alpha \in \mathbb{Z}/p^k\mathbb{Z} $, let be $ a \in \mathbb{Z} $ congruent to $ \alpha \mbox{ mod }p^k $. We define:
$$ \left\langle \frac{\alpha}{p} \right\rangle:= \left\langle \frac{a}{p} \right\rangle .$$
Define the Stickelberger element: 
$$ \theta= \displaystyle\frac{p^k}{2} \sum_{i=1}^{\frac{p-1}{2}p^{k-1}} {\displaystyle\sum_{ \begin{scriptsize} \begin{array}{c} s \in ((\mathbf{O}_K / p^k \mathbf{O}_K)^*/\{\pm 1\})  \\  \pm| s|=w^i \end{array} \end{scriptsize} } B_2 \left( \left\langle \frac{\frac{1}{2}(s+\overline{s}) }{p^k} \right\rangle \right) } w^{-i} \in \mathbb{Q}[H].  $$
Define the ideals:
$$ R_0 := \Big{\{} \sum b_jw^j \in R \mbox{ such that } \deg \left( \sum b_jw^j\right)=\sum b_j=0 \Big{\}}, $$
$$  R_d := \Big{\{} \sum b_jw^j \in R \mbox{ such that } d \mbox{ divides } \deg\left(\sum b_jw^j\right)=\sum b_j \Big{\}}. $$ 
Now we can state the main result:
\begin{mainthm}\label{main}
The group generated by the divisors of modular units in $F_{p^k}  $ of the curve $ X^+_{ns}(p^k) $ can be expressed both as $ R_d\theta $ and as Stickelberger module $ R\theta \cap R $. The Cuspidal Divisor Class Group on $ X^+_{ns}(p^k) $ is a module over $Z[H]$ and we have the following isomorphism: 
$$ \mathfrak{C}^+_{ns}(p^k) \cong R_0 / R_d \theta. $$ 
\end{mainthm}
\begin{proof} 
 For every $ i \in \mathbb{Z}/\frac{\varphi(p^k)}{2}\mathbb{Z} $ define:
$$ a_i = \displaystyle\frac{p^k}{2}\sum_{ { \begin{scriptsize} \begin{array}{c} s \in ((\mathbf{O}_K / p^k \mathbf{O}_K)^*/\{\pm 1\})  \\  \pm| s|=w^i \end{array} \end{scriptsize} }} B_2 \left( \left\langle \frac{\frac{1}{2}(s+\overline{s}) }{p^k} \right\rangle \right). $$
\noindent
We identify the cusps of $ X^+_{ns}(p^k) $ with the elements in $H= (\mathbb{Z}/p^k\mathbb{Z})^*/\{\pm 1\} $ as explained in Proposition \ref{numerocuspidi}. In consideration of Proposition \ref{divisori} we obtain:
$$ \mbox{ div } {G^+_{w^j}}^d(\tau) = d \sum_{i=1}^{\frac{p-1}{2}p^{k-1}} a_i w^{j-i}. $$
If $ p\not\equiv 11 \mbox{ mod } 12$, the function $  G^+_{w^j}(\tau) $ is not $ \Gamma^+_{ns}(p^k)-$invariant but with a slight abuse of notation we write:
$$ \mbox{ div } G^+_{w^j}(\tau) = \sum_{i=1}^{\frac{p-1}{2}p^{k-1}} a_i w^{j-i}. $$
 It is clear that $\mbox{div }G^+_{w^i}(\tau) \in \mathbb{Q}[H] $ and  $d \mbox{ div }G^+_{w^i}(\tau) \in R $.
Consider the Stickelberger element:
$$  \mbox{div } G^+_{\{\pm 1\}}(\tau) =  \sum_{i=1}^{\frac{p-1}{2}p^{k-1}} a_i w^{-i} = $$  
$$ = \displaystyle\frac{p^k}{2} \sum_{i=1}^{\frac{p-1}{2}p^{k-1}} {\displaystyle\sum_{ \pm|s|=w^i} B_2 \left( \left\langle \frac{\frac{1}{2}(s+\overline{s}) }{p^k} \right\rangle \right) } w^{-i} = \theta.  $$
Notice that: $ \mbox{div } G^+_{w^j}(\tau) = w^j\theta  $. By Theorem \ref{powprod}, a $ \Gamma^+_{ns}(p^k)-$invariant function $g(\tau) \in F_{p^k} $ is a modular unit of $ X^+_{ns}(p^k) $, if and only if $ \mbox{ div } g(\tau) \in R_d \theta $. By \cite[Proposition 2.3, Chapter 5]{KL} we have $R_d \theta =R\theta \cap R  $.
\end{proof}
\noindent

\begin{rem} Following Remark \ref{pseudoGalois}, consider $  G:=\displaystyle\frac{C^+_{ns}(p^k) / \pm I}{C'^+_{ns}(p^k) / \pm I} \cong (\mathbb{Z}/p^k\mathbb{Z})^*$  and let $ \rho $ be a generator of $ G/\{\pm 1\} $ with $ \pm \det \rho = w $. We may identify the group $ H $ parameterizing the cusps of $ X^+_{ns}(p^k) $ with $ G/\{\pm 1\} $ observing that for every automorphism $ \rho^j \in G/\{\pm 1\} $   and each moduar unit $h(\tau) \in F^+_{ns}(p^k)$ we have:
\begin{center}
ord$_{w^{-j}} (h(\tau)) = $ord$_{\rho^{-j}(\infty)} (h(\tau)) = $ ord$_\infty \rho^j (h(\tau)) $
\end{center}  
and  
$$ \mbox{div}(\rho^j(h(\tau))) = w^j\mbox{div}(h(\tau)).  $$
If $ \sum a_j \rho^j \in \mathbb{Z}[G/\{\pm 1\}] \cong \mathbb{Z}[H] $ we define $$ \left(\sum a_j w^j\right) (h(\tau)) = \prod \rho^j(h(\tau))^{a_j}  $$ and clearly we have:
$$ \mbox{div} \left(\prod \rho^j(h(\tau))^{a_j}\right) =  \left(\sum a_j w^j \right)\mbox{div}(h(\tau)) $$
so $ \mathfrak{C}^+_{ns}(p^k) $ has a natural structure of $ \mathbb{Z}[H]$-module which emphasizes the analogy with the classical theory of cyclotomic fields recalled in the introductory section.
\end{rem}

\noindent
Define:
$$ \theta' = \theta - \displaystyle\frac{(p+1)p^{2k-1}}{12}\sum_{i=1}^{\frac{p-1}{2}p^{k-1}}w^i $$
\noindent
and observe that $ \theta' \in R$ and $ \deg(\theta') = - \displaystyle\frac{p^2-1}{24} p^{3k-2}. $

\begin{prop}\label{calcolorapido} We have: $$ R_0 \cap \left(R\theta' + p^{2k-1} R \sum_{i=1}^{\frac{p-1}{2}p^{k-1}} w^i \right) = R_d \theta .$$

\end{prop}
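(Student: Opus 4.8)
The plan is to prove the two inclusions separately, each one reducing to a single identity in the group ring. Write $N:=\sum_{i=1}^{\frac{p-1}{2}p^{k-1}}w^i$, which, since $w$ generates the cyclic group $H$ of order $\frac{p-1}{2}p^{k-1}$, equals the norm element $\sum_{h\in H}h$; consequently $w^jN=N$ for all $j$ and therefore $\gamma N=\deg(\gamma)\,N$ for every $\gamma\in\mathbb{Q}[H]$, where $\deg$ denotes the augmentation map. Combining this with the definition $\theta=\theta'+\tfrac{(p+1)p^{2k-1}}{12}N$ yields the key identity
$$ \gamma\theta \;=\; \gamma\theta' \;+\; \frac{(p+1)p^{2k-1}}{12}\,\deg(\gamma)\,N \qquad\text{for all }\gamma\in\mathbb{Q}[H]. $$
I will also use three auxiliary facts: that $\deg\theta=0$ (since $dw^j\theta$ is the divisor of the modular unit $(G^+_{w^j})^d$, hence a principal divisor of degree $0$); that $\deg\theta'=-\tfrac{p^2-1}{24}p^{3k-2}$ and $\deg N=|H|=\tfrac{p-1}{2}p^{k-1}$; and the elementary remark that, because $\gcd(p,12)=1$, for an integer $t$ one has $\tfrac{(p+1)t}{12}\in\mathbb{Z}$ if and only if $d\mid t$, with $d=\tfrac{12}{\gcd(12,p+1)}$.

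For the inclusion $R_d\theta\subseteq R_0\cap(R\theta'+p^{2k-1}RN)$, let $x\in R_d$, so $d\mid\deg(x)$. The key identity gives $x\theta=x\theta'+\big(\tfrac{(p+1)\deg(x)}{12}\big)p^{2k-1}N$, in which the coefficient $\tfrac{(p+1)\deg(x)}{12}$ is an integer by the divisibility remark; hence $x\theta\in R\theta'+p^{2k-1}RN$, and in particular $x\theta\in R$. Since $\deg(x\theta)=\deg(x)\deg(\theta)=0$, also $x\theta\in R_0$, so $x\theta$ lies in the right-hand side. As $R_d\theta=\{x\theta:x\in R_d\}$ is exactly the set of such elements, the inclusion follows.

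For the reverse inclusion, let $y\in R_0$ and fix any representation $y=\alpha\theta'+p^{2k-1}\beta N$ with $\alpha,\beta\in R$. Using $\beta N=\deg(\beta)N$, rewrite $y=\alpha\theta'+p^{2k-1}\deg(\beta)N$. Applying $\deg$ and using $\deg(y)=0$ together with the computed values of $\deg\theta'$ and $\deg N$, one obtains (after cancelling the common factor $(p-1)p^{3k-2}/24\neq0$) the relation $12\deg(\beta)=(p+1)\deg(\alpha)$. Because $\deg(\beta)\in\mathbb{Z}$, the divisibility remark forces $d\mid\deg(\alpha)$, i.e. $\alpha\in R_d$; moreover it gives $\tfrac{(p+1)\deg(\alpha)}{12}=\deg(\beta)$. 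Feeding this into the key identity with $\gamma=\alpha$,
$$ \alpha\theta \;=\; \alpha\theta' + \frac{(p+1)\deg(\alpha)}{12}\,p^{2k-1}N \;=\; \alpha\theta' + p^{2k-1}\deg(\beta)N \;=\; y, $$
so $y=\alpha\theta\in R_d\theta$.

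The whole argument is mechanical once the key identity is isolated; the only delicate point is the divisibility bookkeeping with $d=12/\gcd(12,p+1)$ — specifically, verifying that the degree relation forced by the augmentation map is precisely $12\deg(\beta)=(p+1)\deg(\alpha)$, which is exactly the condition under which the norm-element contributions to $y$ and to $\alpha\theta$ coincide. I would also double-check at the outset the two structural facts on which everything rests: that $N$ is the full norm element (which uses that $H$ is cyclic, generated by $w$, of order $\tfrac{p-1}{2}p^{k-1}$) and that $\deg\theta=0$.
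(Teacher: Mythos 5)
Your proof is correct and follows essentially the same route as the paper: the paper likewise takes a general element $\alpha\theta'+p^{2k-1}\beta\sum w^i$ of degree zero, derives $(p+1)\deg(\alpha)=12\deg(\beta)$ from the augmentation, concludes $d\mid\deg(\alpha)$, and identifies the element with $\alpha\theta$ via $\beta\sum w^i=\deg(\beta)\sum w^i$; you merely spell out both inclusions separately. One cosmetic quibble: the divisibility remark ``$\tfrac{(p+1)t}{12}\in\mathbb{Z}$ iff $d\mid t$'' holds because $\tfrac{12}{\gcd(12,p+1)}$ and $\tfrac{p+1}{\gcd(12,p+1)}$ are coprime, not ``because $\gcd(p,12)=1$'', which is irrelevant here.
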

\begin{proof}
Let $ \alpha, \beta \in R $ such that:
$$ \alpha \theta' + p^{2k-1}\beta\sum_{i=1}^{\frac{p-1}{2}p^{k-1}} w^i \in R_0 .$$
\noindent Then 
$$ -\deg(\alpha)\frac{p^2-1}{24} p^{3k-2} + p^{2k-1} \deg(\beta) \displaystyle\frac{p-1}{2}p^{k-1} = 0  $$
\noindent implies $ (p+1)\deg(\alpha)= 12 \deg(\beta)  $. This is equivalent to say:
$$ d=\displaystyle\frac{12}{\gcd(12,p+1)} \mbox{ divides }\deg(\alpha) $$ and
$$ \alpha \theta' + p^{2k-1}\beta\sum w^i = \alpha\theta' + \frac{(p+1)p^{2k-1}\deg(\alpha)}{12} \sum w^i = \alpha \theta .$$
\end{proof}

\begin{thm}\label{Cardinalità}
We have:
$$ |\mathfrak{C}^+_{ns}(p^k)| = \displaystyle\frac{|\det A_{\theta'}|}{\frac{p^2-1}{24} p^{k-1} e} = 
\displaystyle 24\frac{ \displaystyle\prod{}^{}  {\frac{p^k}{2}B_{2,\chi}}}{\gcd(12,p+1)(p-1)p^{k-1}},  $$
 where $ A_{\theta'} $ is a circulant Toeplitz matrix, $ e= p^{3k-2}\frac{p-1}{2d} $ and the product runs over all nontrivial characters $ \chi $ of $ C(p^k)/{\pm I} $ such that $ \chi(M)=1 $ for every $ M \in C(p^k) $
 with $ \det M = \pm 1 $.
\end{thm}

\begin{proof}
\noindent From Proposition \ref{calcolorapido} and the following isomorphism:
$$ R_0 / \left(R_0 \cap \left(R\theta' + p^{2k-1} R \sum w^i \right)\right) \cong$$ 
 $$ \cong \left(R_0 + R\theta' + p^{2k-1} R \sum w^i \right) / \left(R\theta' + p^{2k-1} R \sum w^i\right)  $$
\noindent we deduce that
$$ \displaystyle|\mathfrak{C}^+_{ns}(p^k)|=\left(R_0 + R\theta' + p^{2k-1} R \sum w^i\right):\left(R\theta' + p^{2k-1} R \sum w^i\right).   $$
From the following chain of consecutive inclusions:
$$ R \supset R_0 + R\theta' + p^{2k-1} R \sum w^i \supset R\theta' + p^{2k-1} R \sum w^i \supset R\theta'   $$ 
\noindent we obtain
$$ \displaystyle|\mathfrak{C}^+_{ns}(p^k)| = \displaystyle\frac{(R:R\theta')}{\left( R : \left(R_0 + R\theta' + p^{2k-1} R \sum w^i\right)\right)\left(\left(R\theta' + p^{2k-1} R \sum w^i\right) : R\theta'\right)}.  $$
\noindent
Define $$ e:= \gcd \left(\deg(\theta'),p^{2k-1}\deg\left(\sum w^i\right)\right) = $$  $$ 
= p^{3k-2}\gcd \left(\frac{p^2-1}{24},\frac{p-1}{2} \right) = p^{3k-2}\frac{p-1}{2d}. $$
\noindent
It is clear that $$ R_0 + R\theta' + p^{2k-1} R \sum w^i = R_e, $$  where by $ R_e $ we mean the ideal of $ R $ consisting of elements whose degree is divisibile by $ e$. So
$$ \left(R : \left(R_0 + R\theta' + p^{2k-1} R \sum w^i\right)\right) = e. $$

\noindent Regarding $ \left(\left(R\theta' + p^{2k-1} R \sum w^i\right) : R\theta'\right) $, we observe that
$$ \left(R\theta' + p^{2k-1} R \sum w^i\right) \big{/} R\theta' \cong \left(p^{2k-1} R \sum w^i\right) \big{/} \left(p^{2k-1} R \sum w^i \cap R\theta'\right). $$
\noindent
But $ \prod_{h}{G^+_h}^{n^+_h} $ is constant if and only if all $ n^+_h $ are the same and so
$$ \mbox{ div }  \prod_{h}{G^+_h}^{n^+_h} = \left(\sum n^+_h h\right)\theta= \sum n^+_h h \left(\theta' + \frac{(p+1)p^{2k-1}}{12}\sum_{i=1}^{\frac{p-1}{2}p^{k-1}}w^i  \right) =0$$ 
implies  
$$ \left(\sum n^+_h h \right)\theta' = - \frac{(p+1)p^{2k-1}}{12} \sum n^+_h \sum w^i \iff n^+_{w}=n^+_{w^2}=n^+_{w^3}=...=n^+_{\pm 1}. $$ 
\noindent 
But
$$ \left( \sum w^i \right) \theta' = \deg (\theta') \sum w^i = - \displaystyle\frac{p^2-1}{24} p^{3k-2} \sum w^i $$ 
so: $$ \left(R\theta' + p^{2k-1} R \sum w^i\right) : R\theta' = \frac{p^2-1}{24} p^{k-1}. $$
The last index we need to compute is $ (R:R\theta') $. Write $ \theta'=\sum a'_i w^{-i} $ and $ a'_i=a_i - \frac{p+1}{12} p^{2k-1} $. Define the following matrix:
 $$ A_{\theta'} = 
  \begin{pmatrix}
   
  a'_{0} & a'_{1} & a'_{2}     & \dots & a'_{\frac{p-1}{2}p^{k-1} -2} & a'_{\frac{p-1}{2}p^{k-1} -1} \\
 a'_{\frac{p-1}{2}p^{k-1}-1} & a'_{0} & a'_{1}     &  \dots & a'_{\frac{p-1}{2}p^{k-1} -3} & a'_{\frac{p-1}{2}p^{k-1} -2}\\
 a'_{\frac{p-1}{2}p^{k-1}-2} & a'_{\frac{p-1}{2}p^{k-1}-1} & a'_{0}    &  \dots & a'_{\frac{p-1}{2}p^{k-1} -4} & a'_{\frac{p-1}{2}p^{k-1} -3}\\ 
\hdotsfor{6} \\ 
  a'_{2} & a'_{3}  & a'_{4}  & \dots & a'_{0} & a'_{1} \\
  a'_{1} & a'_{2} & a'_{3}  &  \dots & a'_{\frac{p-1}{2}p^{k-1} -1} &  a'_{0}
  \end{pmatrix}.   
   $$
\noindent
We have: $ (R:R\theta')=|\det A_{\theta'}| $. The matrix $ A_{\theta'} $ is a circulant Toeplitz matrix, in other words the coefficients $ (A_{\theta'})_{i,j} $ depend only on $ i-j \mod \frac{p-1}{2}p^{k-1}$. This is the matrix of multiplication by $ \theta'$ in $ \mathbb{C}[H] $, so we easily deduce that for $ n=1,2,...,\frac{p-1}{2}p^{k-1} $ the eighenvalues of $ A_{\theta'}$ are:
$$ \lambda_n = \sum_{j=1}^{\frac{p-1}{2}p^{k-1}}a'_j e^{\begin{Large}\frac{4 \pi i j n}{(p-1)p^{k-1}}\end{Large} }  $$
with corresponding eighenvectors:
$$ v_n = \sum_{j=1}^{\frac{p-1}{2}p^{k-1}} e^{\begin{Large}\frac{4 \pi i j n}{(p-1)p^{k-1}}\end{Large} } w^j .$$
\noindent  
Observe that $ \lambda_{\frac{p-1}{2}p^{k-1}} = \sum a'_i = \deg(\theta') = -\frac{p^2-1}{24}p^{3k-2} $ and that according to the definition of Theorem \ref{CDCG} the others $ \lambda_n $ correspond to the generalized Bernoulli number $\frac{p^k}{2} B_{2,\chi} $ where $ \chi $ runs over the nontrivial characters of $ C(p^k)/{\pm I} $ such that $ \chi(M)=1 $ for every $ M \in C(p^k) $
with $ \det M = \pm 1 $.
\\ Gathering all this information together we obtain the desired result. 
\end{proof}

\section{Explicit calculation}

In this section we examine the curve $ X^+_{ns}(p) $ more in details. Denote with $ v $ a generator of the multiplicative group of $ \mathbb{F}_{p^2} $ and indicate with $ \omega $ a generator of the character group $ \hat{\mathbb{F}_{p^2}^*} $ viewing $ C(p) \cong \mathbb{F}_{p^2}^* $. By Theorem \ref{Cardinalità}, in this case we have: 
$$ B_{2,\chi} =  \sum_{x \in \mathbb{F}_{p^2}^* / \pm 1} B_2 \left( \left\langle \frac{\frac{1}{2}\mbox{Tr}(x)}{p} \right\rangle \right) \chi(x),  $$ 
$$ |\mathfrak{C}^+_{ns}(p)| = \displaystyle \frac{24}{(p-1)\gcd(12,p+1)}\prod_{j=1}^{\frac{p-3}{2}}\frac{p}{2}B_{2,\omega^{(2p+2)j}} =
 $$  
 $$ = \displaystyle\frac{ 576 \left| \det\left[\displaystyle\frac{p}{2}\left(\displaystyle\sum_{l=0}^{p}B_2\left(  \left\langle \frac{\frac{1}{2}\mbox{Tr}(v^{i-j+l\frac{p-1}{2}}) }{p} \right\rangle \right) - \frac{p+1}{6} \right) \right]_{1\le i,j \le \frac{p-1}{2}} \right|}{(p-1)^2 p (p+1)\gcd(12,p+1)}. $$
\\
\noindent
In the following table we show the factorization of the orders of cuspidal divisor class groups  $ \mathfrak{C}^+_{ns}(p)  $ for some primes $ p \le 101 $:  

\begin{tab}\label{tab} 
\noindent 
\\
\begin{tabular}{rc}
\toprule
$ p $ &  $ |\mathfrak{C}^+_{ns}(p)|  $ \\
\midrule
5 & $ 1 $ \\ 7 & $ 1 $ \\ 11 & $ 11 $ \\ 
13 &  $ 7 \cdot 13^2 $ \\
17 & $ 2^4 \cdot 3 \cdot 17^3 $ \\
19 & $ 3 \cdot 19^3 \cdot 487 $ \\
23 & $ 23^4 \cdot 37181 $ \\
29 & $  2^6 \cdot 5 \cdot 7^2 \cdot 29^6 \cdot 43^2 $\\
31 & $ 2^2 \cdot 5 \cdot 7 \cdot 11 \cdot 31^6 \cdot 2302381 $ \\
37 & $ 3^4 \cdot 7^2 \cdot 19^3 \cdot 37^8 \cdot 577^2 $ \\
41 & $ 2^6 \cdot 5^2 \cdot 7 \cdot 31^4 \cdot 41^9 \cdot 431^2 $ \\
43 & $ 2^2 \cdot 19 \cdot 29 \cdot 43^9 \cdot 463 \cdot 1051 \cdot 416532733 $ \\ 
53 & $ 3^2 \cdot 13^2 \cdot 53^{12} \cdot 96331^2 \cdot 379549^2 $ \\
59 & $ 59^{14} \cdot 9988553613691393812358794271  $ \\
67 & $  67^{16} \cdot 193 \cdot 661^2 \cdot 2861 \cdot 8009 \cdot 11287 \cdot 9383200455691459 $ \\
71 & $ 31 \cdot 71^{16} \cdot 113 \cdot 211 \cdot 281 \cdot 701^2 \cdot 12713 \cdot 13070849919225655729061  $ \\
73 & $ 2^2 \cdot 3^4 \cdot 11^2 \cdot 37 \cdot 73^{17} \cdot 79^2 \cdot 241^2 \cdot 3341773^2 \cdot 11596933^2  $ \\
83 & $ 83^{19} \cdot 17210653 \cdot 151251379  \cdot 18934761332741 \cdot 48833370476331324749419  $ \\
89 & $ 2^2 \cdot 3 \cdot 5 \cdot 11^2 \cdot 13^2 \cdot 89^{21} \cdot 4027^2 \cdot 262504573^2 \cdot 15354699728897^2  $\\
101 & $ 5^4 \cdot 17 \cdot 101^{24} \cdot 52951^2 \cdot 54371^2 \cdot 58884077243434864347851^2 $
\end{tabular}
\end{tab}

\noindent \\
We recall the following result of \cite{BB}:

\begin{thm}\label{Hurw} 
 The genera of $ X^+_{ns}(p) $ are:
 $$  g(X^+_{ns}(p)) = \displaystyle\frac{1}{24}\left( p^2 - 10 p + 23 + 6\left(\frac{-1}{p}\right) + 4\left(\frac{-3}{p}\right) \right). $$
\end{thm}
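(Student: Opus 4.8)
The genus formula for $ X^+_{ns}(p) $ is a Riemann--Hurwitz computation for the covering $ X^+_{ns}(p)\to X(1)\cong\mathbb{P}^1 $ of the $ j $-line, and this is the route I would take. Write $ \Gamma=\Gamma^+_{ns}(p) $; it contains $ -I $ (since $ -I=M_{-1} $ and $ |-1|=1 $), so we may apply the classical genus formula for modular curves (see, e.g., \cite[Chapter~3]{Diamond:mf}):
$$ g(X_\Gamma)=1+\frac{\mu}{12}-\frac{\varepsilon_2}{4}-\frac{\varepsilon_3}{3}-\frac{\varepsilon_\infty}{2}, $$
where $ \mu=[\,PSL_2(\mathbb{Z}):\overline{\Gamma}\,] $ is the degree over $ X(1) $, $ \varepsilon_\infty $ is the number of cusps, and $ \varepsilon_2,\varepsilon_3 $ are the numbers of elliptic points of order $ 2 $ and $ 3 $. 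The whole proof reduces to computing these four integers.

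Two of them are immediate. Reduction mod $ p $ identifies $ PSL_2(\mathbb{Z})/\overline{\Gamma} $ with $ C'^+_{ns}(p)\backslash SL_2(\mathbb{F}_p) $, so $ \mu=|SL_2(\mathbb{F}_p)|/|C'^+_{ns}(p)|=\frac{p(p^2-1)}{2(p+1)}=\frac{p(p-1)}{2} $, using $ |C'^+_{ns}(p)|=2(p+1) $ from Proposition \ref{strutturanonsplit} (with $ k=1 $); and $ \varepsilon_\infty=\frac{p-1}{2} $ has already been computed in Proposition \ref{numerocuspidi}.

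The core of the matter is the two elliptic-point counts, which is where the quadratic symbols enter. An elliptic point of order $ 2 $ of $ X_\Gamma $ corresponds to a coset $ \overline{\Gamma}w\in\overline{\Gamma}\backslash PSL_2(\mathbb{Z}) $ with $ w\bar Sw^{-1}\in\overline{\Gamma} $, where $ S=\left(\begin{smallmatrix}0&-1\\1&0\end{smallmatrix}\right) $. Reducing mod $ p $, the number of such cosets is
$$ \varepsilon_2=\frac{|\mathcal S\cap C'^+_{ns}(p)|\cdot|\mathrm{Cent}_{SL_2(\mathbb{F}_p)}(S)|}{|C'^+_{ns}(p)|}=\frac{|\mathcal S\cap C'^+_{ns}(p)|\cdot|\mathrm{Cent}_{SL_2(\mathbb{F}_p)}(S)|}{2(p+1)}, $$
where $ \mathcal S $ is the $ SL_2(\mathbb{F}_p) $-conjugacy class of $ S $. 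Now $ \mathrm{Cent}_{SL_2(\mathbb{F}_p)}(S) $ is a split torus when $ p\equiv1\bmod4 $ and a non-split one otherwise, hence has order $ p-\left(\frac{-1}{p}\right) $; and $ \mathcal S\cap C'^+_{ns}(p) $ can be counted directly: every ``reflection'' $ M_sC\in C'^+_{ns}(p) $ (for which $ |s|=-1 $) satisfies $ (M_sC)^2=M_{s\bar s}=M_{-1}=-I $, hence lies in $ \mathcal S $, contributing $ p+1 $ matrices, while the norm-one part $ C'_{ns}(p)\cong\mathbb{Z}/(p+1)\mathbb{Z} $ contains the two elements $ \pm i $ (with $ i^2=-1 $) precisely when $ p\equiv3\bmod4 $, i.e. contributes a further $ 1-\left(\frac{-1}{p}\right) $ matrices. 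Substituting and using $ \left(\frac{-1}{p}\right)^2=1 $, the factor $ p+1 $ cancels and $ \varepsilon_2=\tfrac12\bigl(p+1-2\left(\frac{-1}{p}\right)\bigr) $. The order-$ 3 $ count is entirely parallel, with $ S $ replaced by an element $ V $ of characteristic polynomial $ X^2+X+1 $: its centralizer in $ SL_2(\mathbb{F}_p) $ has order $ p-\left(\frac{-3}{p}\right) $, and among the elements of $ C'^+_{ns}(p) $ only those in $ C'_{ns}(p) $ can have order $ 3 $ (the reflections have order $ 4 $), of which there are $ 2 $ when $ 3\mid p+1 $ and $ 0 $ otherwise; this yields $ \varepsilon_3=\tfrac12\bigl(1-\left(\frac{-3}{p}\right)\bigr) $.

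Finally I would substitute $ \mu=\frac{p(p-1)}{2} $, $ \varepsilon_\infty=\frac{p-1}{2} $, $ \varepsilon_2=\tfrac12\bigl(p+1-2\left(\frac{-1}{p}\right)\bigr) $ and $ \varepsilon_3=\tfrac12\bigl(1-\left(\frac{-3}{p}\right)\bigr) $ into the genus formula and clear denominators; multiplying through by $ 24 $ collapses the right-hand side to $ p^2-10p+23+6\left(\frac{-1}{p}\right)+4\left(\frac{-3}{p}\right) $, which is the claimed expression for $ 24\,g(X^+_{ns}(p)) $. The one genuinely delicate part is the elliptic-point bookkeeping: keeping track of the passage between $ SL_2 $ and $ PSL_2 $ (and between a matrix and its negative) while intersecting the conjugacy classes of $ S $ and $ V $ with $ C^+_{ns}(p) $, and in particular recognising that the entire reflection coset contributes to $ \varepsilon_2 $ but none of it to $ \varepsilon_3 $.
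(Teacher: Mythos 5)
Your proposal is correct and follows essentially the same route as the paper: the standard genus formula $g=1+\frac{\mu}{12}-\frac{\varepsilon_2}{4}-\frac{\varepsilon_3}{3}-\frac{\varepsilon_\infty}{2}$ with $\mu=\frac{p(p-1)}{2}$, $\varepsilon_\infty=\frac{p-1}{2}$, $\varepsilon_2=\frac{p+1}{2}-\left(\frac{-1}{p}\right)$ and $\varepsilon_3=\frac{1}{2}-\frac{1}{2}\left(\frac{-3}{p}\right)$, which are exactly the values the paper uses. The only difference is that the paper cites Rebolledo--Wuthrich for the elliptic-point counts, whereas you derive them by the conjugacy-class/centralizer computation in $SL_2(\mathbb{F}_p)$ (and your counts, including the cancellation of the factor $p+1$ in $\varepsilon_2$, check out).
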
 
\begin{proof}
It is a consequence of Hurwitz's formula \cite[Proposition 1.40]{Shimura:af}:

$$ g(\Gamma) = 1 + \frac{d}{12} - \frac{e_2}{4} -  \frac{e_3}{3} - \frac{e_\infty}{2}.  $$
\noindent 
In this case:  
$ d:= [SL_2(\mathbb{Z}):\Gamma_{ns}^+(p)] = \frac{p(p-1)}{2} $, 
$ e_\infty = \frac{p-1}{2} $ is the number of cusps (see Proposition \ref{numerocuspidi}), $ e_2 $ and $ e_3 $ denote the number of elliptic points of period 2 and 3. We have (cfr. \cite[Proposition 12]{Rebolledo}):  
$$ e_2 = \frac{p+1}{2} - \left(\frac{-1}{p}\right)   \mbox{ and } e_3 =  \frac{1}{2} - \frac{1}{2}\left(\frac{-3}{p}\right). $$
\end{proof} 
 By Theorem \ref{Hurw} we have $ g(X^+_{ns}(5))=g(X^+_{ns}(7))=0 $ so it will not be surprising to find out that $ \mathfrak{C}^+_{ns}(5)$ and $ \mathfrak{C}^+_{ns}(7) $ are trivial. 
 
 For $ 11 \le p \le 31 $ we provide further corroborative evidence of Table \ref{tab}. From \cite[p. 195]{SerreMordel} we have:
 
\begin{thm}\label{x1}
   The modular curve $ X^+_{ns}(p) $ associated to the subgroup $ C_{ns}^+(p) $ is a projective non-singular modular curve which can be defined over $ \mathbb{Q} $. The cusps are defined over $ \mathbb{Q}(\cos(\frac{2\pi}{p})) $, the maximal real subfield of the $ p $-th cyclotomic field.
\end{thm}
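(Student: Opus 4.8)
The statement is due to Serre, \cite[p.~195]{SerreMordel}; the plan is to recover it from the general formalism of canonical $\mathbb{Q}$-models of modular curves together with the material already assembled above. First I would realise $X^+_{ns}(p)$, as a curve over $\mathbb{Q}$, as the smooth projective model of the subfield $L\subset F_p$ fixed by the image of $C^+_{ns}(p)/\{\pm I\}$ under the isomorphism $\mathrm{Gal}(F_p/\mathbb{Q}(j))\cong GL_2(\mathbb{Z}/p\mathbb{Z})/\{\pm I\}$ of Theorem \ref{fricke}. The key observation is that $\det$ restricted to $C_{ns}(p)\cong(\mathbf{O}_K/p\mathbf{O}_K)^*\cong\mathbb{F}_{p^2}^*$ is the map $s\mapsto|s|=s\bar s$, i.e.\ the norm $N_{\mathbb{F}_{p^2}/\mathbb{F}_p}$, which is surjective; and by Theorem \ref{fricke}(3) the matrix $\begin{pmatrix}1&0\\0&d\end{pmatrix}$ acts as $\sigma_d$ on the constant field $\mathbb{Q}(\zeta_p)$, so the subgroup $C^+_{ns}(p)/\{\pm I\}$ already surjects onto $\mathrm{Gal}(\mathbb{Q}(\zeta_p)/\mathbb{Q})$ under the determinant character. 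Hence the field of constants of $L$ is $\mathbb{Q}(\zeta_p)^{(\mathbb{Z}/p\mathbb{Z})^*}=\mathbb{Q}$, so $L/\mathbb{Q}$ is a regular extension and its model $X^+_{ns}(p)$ (the normalisation of $\mathbb{P}^1_j=X(1)$ in $L$) is a geometrically connected, smooth, projective curve defined over $\mathbb{Q}$; a degree count shows $L\cdot\mathbb{Q}(\zeta_p)=F_p^{C'^+_{ns}(p)/\{\pm I\}}$, so the base change of $X^+_{ns}(p)$ to $\mathbb{C}$ is $\Gamma^+_{ns}(p)\backslash\mathbb{H}^*$.

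Next I would determine the field of definition of the cusps. These are the places of $L$ above $j=\infty$; by Proposition \ref{numerocuspidi} there are $\frac{p-1}{2}$ of them, parametrised by $H=(\mathbb{Z}/p\mathbb{Z})^*/\{\pm 1\}$ through the determinant label. Since all cusps of $X(p)$ are rational over $\mathbb{Q}(\zeta_p)$, the absolute Galois group permutes the cusps of $X^+_{ns}(p)$ through $\mathrm{Gal}(\mathbb{Q}(\zeta_p)/\mathbb{Q})\cong(\mathbb{Z}/p\mathbb{Z})^*$; and by the same computation that underlies Remark \ref{pseudoGalois} --- $\sigma_d$ corresponds to $\begin{pmatrix}1&0\\0&d\end{pmatrix}$, which multiplies the determinant label by $d$ --- this action is the translation $h\mapsto d\,h$ on $H$, hence simply transitive. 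Therefore the decomposition group of any cusp is the image of $\{\pm 1\}\subset(\mathbb{Z}/p\mathbb{Z})^*$, whose fixed field is $\mathbb{Q}(\zeta_p)^{\langle\sigma_{-1}\rangle}=\mathbb{Q}(\zeta_p+\zeta_p^{-1})=\mathbb{Q}(\cos(2\pi/p))$. So each cusp is defined over $\mathbb{Q}(\cos(2\pi/p))$ and, the orbit having size $\frac{p-1}{2}=[\mathbb{Q}(\cos(2\pi/p)):\mathbb{Q}]$, over no proper subfield; in particular complex conjugation $\sigma_{-1}$ fixes every cusp, so the cusps are real, as asserted.

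The only point that is not pure formalism is this identification of the Galois action: that $\sigma_d$ translates the cusps by $h\mapsto d\,h$, \emph{simply transitively}, rather than, say, through squares in $(\mathbb{Z}/p\mathbb{Z})^*$. This is precisely what produces the maximal real subfield and not the full $p$-th cyclotomic field, and it rests on the explicit action of $\sigma_d$ on Fricke and Siegel functions recorded in Theorem \ref{fricke}(3) and Proposition \ref{sigfun}, combined with the cusp parametrisation of Proposition \ref{numerocuspidi}. Everything else --- existence and properties of the $\mathbb{Q}$-model, geometric connectedness from surjectivity of the determinant, smoothness and projectivity of the compactified quotient --- is the general theory of canonical models, for which a self-contained treatment would cite the moduli-theoretic discussion of \cite[Chapter 7]{Diamond:mf} or \cite[p.~195]{SerreMordel} at this stage.
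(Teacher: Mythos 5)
Your argument is correct, but it is worth saying up front that the paper does not actually prove Theorem \ref{x1}: it is stated as a quotation of Serre, with the single reference \cite[p.~195]{SerreMordel} and no argument at all. So your proposal is not a variant of the paper's proof but a genuine reconstruction of one, assembled from the canonical-model formalism together with ingredients the paper does supply elsewhere (the Galois description of $F_p$ in Theorem \ref{fricke}, the cusp count and parametrisation by $H=(\mathbb{Z}/p\mathbb{Z})^*/\{\pm 1\}$ in Proposition \ref{numerocuspidi}, and the action of $\sigma_d$ implicit in Remark \ref{pseudoGalois}). The two key points you isolate are exactly the right ones: surjectivity of $\det|_{C^+_{ns}(p)}=N_{\mathbb{F}_{p^2}/\mathbb{F}_p}$ forces the constant field of $L=F_p^{C^+_{ns}(p)/\{\pm I\}}$ to be $\mathbb{Q}$, giving the regular $\mathbb{Q}$-model; and the transitivity of the $\mathrm{Gal}(\mathbb{Q}(\zeta_p)/\mathbb{Q})$-action on the $\frac{p-1}{2}$ cusps with stabiliser exactly $\{\pm 1\}$ pins the residue field down to $\mathbb{Q}(\zeta_p)^{\langle\sigma_{-1}\rangle}=\mathbb{Q}(\cos(2\pi/p))$ and to no proper subfield. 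What your route buys over the paper's bare citation is precisely this last transparency: one sees \emph{why} the answer is the maximal real subfield rather than, say, the fixed field of the squares. One small slip of wording: the action of $(\mathbb{Z}/p\mathbb{Z})^*$ on the set $H$ of cusps cannot be \emph{simply} transitive (a group of order $p-1$ acting on $\frac{p-1}{2}$ points); it is the induced action of $H$ on itself that is simply transitive, and what you actually use --- transitivity with stabiliser $\{\pm 1\}$ --- is the correct statement and is what your next sentence relies on. With that phrase repaired, the proof stands.
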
 
   
From \cite{Chen} we have the following result:
 
\begin{thm}\label{x2}
The jacobian of $ X^+_{ns}(p) $ is isogenous to the new part of the Jacobian $ J_0^+(p^2) $ of $ X_0^+(p^2) $.
\end{thm}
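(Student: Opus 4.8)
The statement is established in Chen's work \cite{Chen}; here I sketch the route I would take. The plan is to trade the geometric claim for a comparison of spaces of weight-two cusp forms as modules over the Hecke algebra generated by the operators $T_\ell$ with $\ell \nmid p$. By the Eichler--Shimura construction (cf. \cite[Chapter 6]{Diamond:mf}), the rational Tate module of $J^+_{ns}(p)$, together with its Hecke action, is determined by $S_2(\Gamma^+_{ns}(p))$, while that of the new part of $J_0^+(p^2)$ is determined by $S_2(\Gamma^+_0(p^2))^{\mathrm{new}}$ --- equivalently, by the subspace of $S_2(\Gamma_0(p^2))^{\mathrm{new}}$ on which the Atkin--Lehner involution $w_{p^2}$ acts trivially. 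Hence it suffices to exhibit a Hecke-equivariant isomorphism
$$ S_2(\Gamma^+_{ns}(p)) \;\cong\; S_2(\Gamma^+_0(p^2))^{\mathrm{new}}, $$
after which the two $\ell$-adic Tate modules coincide as Galois-and-Hecke modules and Faltings' isogeny theorem yields the desired isogeny over $\mathbb{Q}$.

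To produce this isomorphism I would pass to automorphic representations. A normalized newform $f \in S_2(\Gamma_0(p^2))$ with trivial nebentypus generates a cuspidal automorphic representation $\pi_f = \bigotimes_v \pi_{f,v}$ of $\mathrm{GL}_2$ over $\mathbb{Q}$ whose local component at $p$ has conductor exponent $2$ and trivial central character; for $p \neq 2,3$ one checks that such a $\pi_{f,p}$ is ``dihedral at $p$'', i.e. automorphically induced from a character of $(\mathbf{O}_K / p\,\mathbf{O}_K)^* \cong \mathbb{F}_{p^2}^*$, where $K = \mathbb{Q}(\sqrt{\epsilon})$ is the quadratic field whose unit group defines the non-split Cartan subgroup. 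The forms carved out by $X^+_{ns}(p)$ are exactly those whose associated representation is of this dihedral type \emph{and} on which the normalizer of the non-split Cartan acts with eigenvalue $+1$; on the $\Gamma^+_0(p^2)$ side the latter condition is matched by $w_{p^2}f = f$. Since automorphic induction does not alter the Hecke eigenvalues at primes $\ell \neq p$, sending $f$ to the corresponding form in $S_2(\Gamma^+_{ns}(p))$ is Hecke-equivariant; it is injective (a newform is recovered from its away-from-$p$ eigenvalue system), and a dimension count --- on the $X^+_{ns}(p)$ side this is precisely the genus formula of Theorem \ref{Hurw} --- then forces it to be an isomorphism. A purely classical variant, which is in fact Chen's original argument, replaces all of this by a direct evaluation of $\mathrm{Tr}(T_\ell)$ on both spaces through the Eichler--Selberg trace formula, checking the identity term by term for every $\ell \nmid p$ and invoking strong multiplicity one.

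The crux in either approach is entirely local at $p$: one must pin down the admissible representations of $\mathrm{GL}_2(\mathbb{Q}_p)$ of conductor $p^2$ with trivial central character, show that for $p \neq 2,3$ the $w_{p^2}$-fixed new vectors among them are governed by the non-split torus $\mathbb{F}_{p^2}^*$ and carry a one-dimensional space of Cartan invariants, and reconcile the action of $w_{p^2}$ with that of the normalizer of $C_{ns}(p)$. In the trace-formula guise the same difficulty reappears as the computation of the $p$-local elliptic and parabolic contributions and the isolation of the genuinely new-at-$p^2$ part, the terms away from $p$ agreeing for trivial reasons. The full verification is carried out in \cite{Chen}, to which we refer for the details.
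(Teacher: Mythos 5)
The paper offers no proof of this statement at all: it is imported verbatim from Chen's article \cite{Chen}, with the single line ``From \cite{Chen} we have the following result'' serving as the proof. Your proposal ultimately does the same thing --- it defers the crux to \cite{Chen} --- and your second, ``classical'' variant (term-by-term comparison of $\mathrm{Tr}(T_\ell)$ on $S_2(\Gamma^+_{ns}(p))$ and $S_2(\Gamma^+_0(p^2))^{\mathrm{new}}$ via the Eichler--Selberg trace formula, then strong multiplicity one and Faltings) is an accurate description of Chen's actual argument, so in that sense you match the source the paper relies on. One caution about your first route: the assertion that every local component $\pi_{f,p}$ of conductor exponent $2$ and trivial central character is automorphically induced from the unramified quadratic extension is false --- ramified principal series $\pi(\chi,\chi^{-1})$ with $\chi$ of conductor $p$, and ramified quadratic twists of the Steinberg representation, also have conductor $p^2$ --- so a correspondence built only on depth-zero supercuspidals would account for just part of $J_0^{+,\mathrm{new}}(p^2)$ and would prove a weaker statement than the theorem claims. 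Since you explicitly fall back on \cite{Chen} for the local analysis, this does not sink the proposal, but as written that step would need to be repaired (or replaced by the trace-formula argument, or by the de Smit--Edixhoven representation-theoretic identity) to obtain the full isogeny.
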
 

From \cite[Chapter 12]{Ribet} we have this interesting corollary of the Eichler-Shimura relation \cite[pag. 354]{Diamond:mf}:

\begin{thm}\label{x3} Let $q$ be a prime that does not divide $ N $ and let $f(x) $ the characteristic polynomial of the Hecke operator $ T_q $ acting on $ S^{new}_2(\Gamma^+_0(N)) $. Then:
$$ |{J_0^+}^{new}(N)(\mathbb{F}_q)| = f(q+1).$$
\end{thm}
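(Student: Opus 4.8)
The idea is that the theorem is the specialisation, to the abelian variety $\mathcal{A}={J_0^+}^{new}(N)$, of the standard package consisting of the Eichler--Shimura congruence relation together with the point-counting formula for abelian varieties over finite fields. First I would record that, since $q\nmid N$, the variety ${J_0^+}^{new}(N)$ --- being an isogeny factor of $J_0(N)$ --- has good reduction at $q$ (N\'eron--Ogg--Shafarevich); let $\mathcal{A}$ denote the special fibre of its N\'eron model, an abelian variety over $\mathbb{F}_q$ of dimension $g=\dim S^{new}_2(\Gamma^+_0(N))$, so that $|{J_0^+}^{new}(N)(\mathbb{F}_q)|$ means $|\mathcal{A}(\mathbb{F}_q)|$. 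Then I would invoke the elementary fact that for any abelian variety $\mathcal{A}/\mathbb{F}_q$ one has
$$ |\mathcal{A}(\mathbb{F}_q)| = \deg(1-\mathrm{Frob}_q) = P(1), \qquad P(T):=\det\bigl(T\cdot\mathrm{id}-\mathrm{Frob}_q\ \big|\ V_\ell\mathcal{A}\bigr)\ \ (\ell\neq q), $$
which holds because $1-\mathrm{Frob}_q$ is \'etale (its differential is the identity on the tangent space) and hence its degree equals the order of its kernel, which is exactly $\mathcal{A}(\mathbb{F}_q)$.

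Next I would bring in the Eichler--Shimura relation \cite[pag. 354]{Diamond:mf}: on the good reduction modulo $q$ of $X_0(N)$ the Hecke correspondence $T_q$ becomes $\mathrm{Frob}_q+\mathrm{Frob}_q^{\vee}$, and, transporting this to the Jacobian and using $\mathrm{Frob}_q\circ\mathrm{Frob}_q^{\vee}=[q]$, one obtains
$$ \mathrm{Frob}_q^{\,2} - T_q\,\mathrm{Frob}_q + [q] = 0 \quad\text{in } \mathrm{End}\bigl(J_0(N)_{/\mathbb{F}_q}\bigr). $$
Since $T_q$ (with $q\nmid N$) commutes with the Atkin--Lehner involution $w_N$ and preserves the new part, this identity restricts to $\mathcal{A}$ and therefore acts on $V_\ell\mathcal{A}$. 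As $\mathrm{Frob}_q$ is invertible on $V_\ell\mathcal{A}$ (with determinant $q^g$) and acts semisimply (Tate), we may diagonalise it over $\overline{\mathbb{Q}}_\ell$ with eigenvalues $\alpha_1,\dots,\alpha_{2g}$; the relation then gives $T_q=\mathrm{Frob}_q+q\,\mathrm{Frob}_q^{-1}$ on $V_\ell\mathcal{A}$, so the $\alpha_i$ fall into $g$ pairs $\{\alpha_j,\beta_j\}$ with $\alpha_j\beta_j=q$ and $\alpha_j+\beta_j=a_j$, where the $a_j$ are precisely the eigenvalues of $T_q$ on $S^{new}_2(\Gamma^+_0(N))$ under the Eichler--Shimura isomorphism, i.e.\ $f(T)=\prod_{j=1}^g(T-a_j)$. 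Hence
$$ |\mathcal{A}(\mathbb{F}_q)| = P(1) = \prod_{j=1}^g (1-\alpha_j)(1-\beta_j) = \prod_{j=1}^g\bigl(1-(\alpha_j+\beta_j)+\alpha_j\beta_j\bigr) = \prod_{j=1}^g (q+1-a_j) = f(q+1), $$
which is the claim.

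The eigenvalue bookkeeping and the \'etaleness computation are routine. The step that needs genuine care --- and the one I expect to be the main obstacle --- is the descent of the Eichler--Shimura congruence from $J_0(N)$ to the new, Atkin--Lehner-fixed quotient that is ${J_0^+}^{new}(N)$: one must check that the passage from $X_0(N)$ to $X_0^+(N)=X_0(N)/\langle w_N\rangle$ keeps good reduction at $q\nmid N$, that $T_q$ genuinely acts on $\mathcal{A}$ compatibly with the reduction map, and that the Eichler--Shimura isomorphism identifies the Frobenius eigenvalues on the $\ell$-adic cohomology of the reduction with the $T_q$-eigenvalues on $S^{new}_2(\Gamma^+_0(N))$ rather than on the full cusp space. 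All of this is contained in \cite[Chapter 12]{Ribet} (together with \cite[Section 8.7]{Diamond:mf}), and the argument above is simply its specialisation to the abelian variety ${J_0^+}^{new}(N)$.
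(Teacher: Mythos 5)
Your argument is correct and is essentially the same as the paper's: the paper offers no proof of its own here, merely citing \cite[Chapter 12]{Ribet}, and your derivation (good reduction at $q\nmid N$, the formula $|\mathcal{A}(\mathbb{F}_q)|=\deg(1-\mathrm{Frob}_q)=P(1)$, the Eichler--Shimura relation $\mathrm{Frob}_q^2-T_q\,\mathrm{Frob}_q+[q]=0$ restricted to the new, $w_N$-compatible factor, and the pairing of Frobenius eigenvalues giving $P(1)=\prod_j(q+1-a_j)=f(q+1)$) is precisely the standard argument contained in that reference. The one point you rightly flag as needing care --- that $T_q$ and the congruence relation descend to ${J_0^+}^{new}(N)$ compatibly with reduction --- is likewise handled in the cited sources, so nothing is missing.
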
  
\noindent 
Choose a prime $ q \equiv \pm 1 \mbox{ mod }p$ that does not divide $  |\mathfrak{C}^+_{ns}(p)| $.  From the previous theorems, the cuspidal divisor class group $ \mathfrak{C}^+_{ns}(p) $ injects into $ J^+_{ns}(p)(\mathbb{F}_q) $. So we expect that $ |\mathfrak{C}^+_{ns}(p)|  $ divides $ |{J_0^+}^{new}(p^2)(\mathbb{F}_q)| = f_{q,p^2}(q+1)$ where $ f_{q,p^2} $ is the characteristic polynomial of the Hecke operator $T_q $ acting on $ S_2^{new}(\Gamma^+_0(p^2))$. 
From the modular form database of W.Stein we have:\\ 

\noindent $ |{J_0^+}^{new}(11^2)(\mathbb{F}_{23})| = f_{23,121}(24)= 3 \cdot 11 ,$\\
$ |{J_0^+}^{new}(11^2)(\mathbb{F}_{43})| = f_{43,121}(44)= 2^2 \cdot 11 ,$\\
$ |{J_0^+}^{new}(11^2)(\mathbb{F}_{67})| = f_{67,121}(68)= 5 \cdot 11 ,$\\
$ |{J_0^+}^{new}(11^2)(\mathbb{F}_{89})| = f_{89,121}(90)= 3^2 \cdot 11 ,$\\
$ |{J_0^+}^{new}(11^2)(\mathbb{F}_{109})| = f_{109,121}(110)= 2 \cdot 5 \cdot 11 ,$\\
$ |{J_0^+}^{new}(11^2)(\mathbb{F}_{131})| = f_{131,121}(132)= 2^2 \cdot 3 \cdot 11 ,$\\
$ |{J_0^+}^{new}(11^2)(\mathbb{F}_{197})| = f_{197,121}(198)= 2 \cdot 3^2 \cdot 11 ,$\\
$ |{J_0^+}^{new}(11^2)(\mathbb{F}_{199})| = f_{199,121}(200)= 2^2 \cdot 5 \cdot 11 ,$\\
$ |{J_0^+}^{new}(11^2)(\mathbb{F}_{241})| = f_{241,121}(242)= 2 \cdot 11^2 ,$\\
$ |{J_0^+}^{new}(11^2)(\mathbb{F}_{263})| = f_{263,121}(264)= 2^3 \cdot 3 \cdot 11 ,$\\
$ |{J_0^+}^{new}(11^2)(\mathbb{F}_{307})| = f_{307,121}(308)= 2^2 \cdot 7 \cdot 11 ,$\\
$ |{J_0^+}^{new}(11^2)(\mathbb{F}_{331})| = f_{331,121}(332)= 3^3 \cdot 11 ,$\\
$ |{J_0^+}^{new}(11^2)(\mathbb{F}_{353})| = f_{353,121}(354)= 3 \cdot 11^2 ,$\\
$ |{J_0^+}^{new}(11^2)(\mathbb{F}_{373})| = f_{373,121}(374)= 2 \cdot 11 \cdot 17 ,$\\
$ |{J_0^+}^{new}(11^2)(\mathbb{F}_{397})| = f_{397,121}(398)= 2^2 \cdot 3^2 \cdot 11 ,$\\
$ |{J_0^+}^{new}(11^2)(\mathbb{F}_{419})| = f_{419,121}(420)= 2^2 \cdot 3^2 \cdot 11 ,$\\
$ |{J_0^+}^{new}(11^2)(\mathbb{F}_{439})| = f_{439,121}(440)= 2^3 \cdot 5 \cdot 11 ,$\\
$ |{J_0^+}^{new}(11^2)(\mathbb{F}_{461})| = f_{461,121}(462)= 2 \cdot 3 \cdot 7 \cdot 11 ,$\\
$ |{J_0^+}^{new}(11^2)(\mathbb{F}_{463})| = f_{463,121}(464)= 3^2 \cdot 5 \cdot 11 ,$\\

\noindent $ |{J_0^+}^{new}(13^2)(\mathbb{F}_{53})| = f_{53,169}(54)= 7 \cdot 13 ^2 \cdot 127 ,$\\
$ |{J_0^+}^{new}(13^2)(\mathbb{F}_{79})| = f_{79,169}(80)= 7 \cdot 13 ^2 \cdot 449 ,$\\
$ |{J_0^+}^{new}(13^2)(\mathbb{F}_{103})| = f_{103,169}(104)= 7 \cdot 13 ^2 \cdot 967 ,$\\
$ |{J_0^+}^{new}(13^2)(\mathbb{F}_{131})| = f_{131,169}(132)= 7 \cdot 13 ^5, $\\
$ |{J_0^+}^{new}(13^2)(\mathbb{F}_{157})| = f_{157,169}(158)= 7^2 \cdot 13 ^2 \cdot 503,$\\
$ |{J_0^+}^{new}(13^2)(\mathbb{F}_{181})| = f_{181,169}(182)= 7 \cdot 13 ^2 \cdot 4327, $\\
$ |{J_0^+}^{new}(13^2)(\mathbb{F}_{233})| = f_{233,169}(234)= 7 \cdot 13 ^2 \cdot 11731, $\\
$ |{J_0^+}^{new}(13^2)(\mathbb{F}_{311})| = f_{311,169}(312)= 7 \cdot 13 ^2 \cdot 26249, $\\
$ |{J_0^+}^{new}(13^2)(\mathbb{F}_{313})| = f_{313,169}(314)= 7 \cdot 13 ^2 \cdot 29443, $\\
$ |{J_0^+}^{new}(13^2)(\mathbb{F}_{337})| = f_{337,169}(338)= 7 \cdot 13 ^2 \cdot 35449, $\\
$ |{J_0^+}^{new}(13^2)(\mathbb{F}_{389})| = f_{389,169}(390)= 2^3 \cdot 7 \cdot 13 ^2 \cdot 71 \cdot 83, $\\
$ |{J_0^+}^{new}(13^2)(\mathbb{F}_{443})| = f_{443,169}(444)= 2^3 \cdot 7 \cdot 13 ^3 \cdot 643, $\\
$ |{J_0^+}^{new}(13^2)(\mathbb{F}_{467})| = f_{467,169}(468)=  7 \cdot 13 ^2 \cdot 93199, $\\

\noindent $ |{J_0^+}^{new}(17^2)(\mathbb{F}_{67})| = f_{67,289}(68)= 2^8 \cdot 3 \cdot 17^5 \cdot 71  ,$\\
$ |{J_0^+}^{new}(17^2)(\mathbb{F}_{101})| = f_{101,289}(102)= 2^4 \cdot 3^2 \cdot 7 \cdot 17^3 \cdot 19 \cdot 79 \cdot 181 ,$\\
$ |{J_0^+}^{new}(17^2)(\mathbb{F}_{103})| = f_{103,289}(104)= 2^7 \cdot 3^4 \cdot 17^4 \cdot 1601  ,$\\
$ |{J_0^+}^{new}(17^2)(\mathbb{F}_{137})| = f_{137,289}(138)= 2^6 \cdot 3^8 \cdot 17^4 \cdot 181  ,$\\
$ |{J_0^+}^{new}(17^2)(\mathbb{F}_{239})| = f_{239,289}(240)= 2^8 \cdot 3^2 \cdot 17^3 \cdot 373 \cdot 48871  ,$\\
$ |{J_0^+}^{new}(17^2)(\mathbb{F}_{271})| = f_{271,289}(272)= 2^5 \cdot 3^9 \cdot 5^3 \cdot 17^4 \cdot 53  ,$\\
$ |{J_0^+}^{new}(17^2)(\mathbb{F}_{307})| = f_{307,289}(308)= 2^6 \cdot 3^5 \cdot 5 \cdot 17^3 \cdot 23 \cdot 71 \cdot 1423  ,$\\
$ |{J_0^+}^{new}(17^2)(\mathbb{F}_{373})| = f_{373,289}(374)= 2^4 \cdot 3^4 \cdot 17^3 \cdot 23 \cdot 73 \cdot 101 \cdot 2789  ,$\\
$ |{J_0^+}^{new}(17^2)(\mathbb{F}_{409})| = f_{409,289}(410)= 2^7 \cdot 3^5 \cdot 17^3 \cdot 23 \cdot 53 \cdot 71 \cdot 359  ,$\\
$ |{J_0^+}^{new}(17^2)(\mathbb{F}_{443})| = f_{443,289}(444)= 2^5 \cdot 3^2 \cdot 13 \cdot 17^4 \cdot 19 \cdot 79 \cdot 15263  ,$\\

\noindent $ |{J_0^+}^{new}(19^2)(\mathbb{F}_{37})| = f_{37,361}(38)= 2 \cdot 3 \cdot 19^3 \cdot 37 \cdot 487 \cdot 5441 ,$\\
$ |{J_0^+}^{new}(19^2)(\mathbb{F}_{113})| = f_{113,361}(114)= 2^5 \cdot 3^7 \cdot 19^7 \cdot 487 ,$\\
$ |{J_0^+}^{new}(19^2)(\mathbb{F}_{151})| = f_{151,361}(152)= 2^3 \cdot 3^3 \cdot 17 \cdot 19^4 \cdot 487 \cdot 1459141 ,$\\
$ |{J_0^+}^{new}(19^2)(\mathbb{F}_{191})| = f_{191,361}(192)= 3^2 \cdot 11^5 \cdot 19^6 \cdot 73 \cdot 487 ,$\\
$ |{J_0^+}^{new}(19^2)(\mathbb{F}_{227})| = f_{227,361}(228)= 2^2 \cdot 3^4 \cdot 19^3 \cdot 487 \cdot 971 \cdot 7323581,$\\
$ |{J_0^+}^{new}(19^2)(\mathbb{F}_{229})| = f_{229,361}(230)= 3 \cdot 11 \cdot 17 \cdot 19^3 \cdot 467 \cdot 487 \cdot 2819^2 ,$\\
$ |{J_0^+}^{new}(19^2)(\mathbb{F}_{379})| = f_{379,361}(380)= 2^6 \cdot 3 \cdot 5^2 \cdot 19^3 \cdot 179 \cdot 487 \cdot 4019 \cdot 33247 ,$\\
$ |{J_0^+}^{new}(19^2)(\mathbb{F}_{419})| = f_{419,361}(420)= 2^6 \cdot 3^2 \cdot 5^3 \cdot 19^3 \cdot 487 \cdot 509^2 \cdot 16487 ,$\\
$ |{J_0^+}^{new}(19^2)(\mathbb{F}_{457})| = f_{457,361}(458)= 2^4 \cdot 3 \cdot 5^4 \cdot 19^3 \cdot 487 \cdot 521^2 \cdot 65629 ,$\\

\noindent $ |{J_0^+}^{new}(23^2)(\mathbb{F}_{47})| = f_{47,529}(48)= 2^3 \cdot 3^3 \cdot 7^4 \cdot 11 \cdot 13 \cdot 23^4 \cdot 8117 \cdot 37181 ,$\\
$ |{J_0^+}^{new}(23^2)(\mathbb{F}_{137})| = f_{137,529}(138)= 2^4 \cdot 3^6 \cdot 23^8 \cdot 2399 \cdot 37181 \cdot 75553 ,$\\
$ |{J_0^+}^{new}(23^2)(\mathbb{F}_{139})| = f_{139,529}(140)= 2^4 \cdot 3^8 \cdot 23^9 \cdot 107^2 \cdot 109 \cdot 37181 ,$\\
$ |{J_0^+}^{new}(23^2)(\mathbb{F}_{229})| = f_{229,529}(230)= 2^6 \cdot 11 \cdot 23^6 \cdot 43 \cdot 67 \cdot 37181 \cdot 325729 \cdot 1296721 ,$\\
$ |{J_0^+}^{new}(23^2)(\mathbb{F}_{277})| = f_{277,529}(278)= 2^8 \cdot 3^{10} \cdot 23^7 \cdot 113^2 \cdot 331 \cdot 7193 \cdot 37181 ,$\\
$ |{J_0^+}^{new}(23^2)(\mathbb{F}_{367})| = f_{367,529}(368)= 2^4 \cdot 23^5 \cdot 67^2 \cdot 193 \cdot 1847 \cdot 37181 \cdot 44617  \cdot    8643209 ,$\\
$ |{J_0^+}^{new}(23^2)(\mathbb{F}_{461})| = f_{461,529}(462)=  3^6 \cdot 7^4 \cdot  23^7 \cdot  43^2 \cdot  67 \cdot 199 \cdot 2857^2 \cdot 37181 ,$\\

\noindent $ |{J_0^+}^{new}(29^2)(\mathbb{F}_{59})| = f_{59,841}(60)= 2^8 \cdot 3^2 \cdot 5 \cdot 7^2 \cdot 11^2 \cdot 17 \cdot 23^2 \cdot 29^6 \cdot 43^2 \cdot 569 \cdot 967^2 \cdot 2999 \cdot 11695231 ,$\\
$ |{J_0^+}^{new}(29^2)(\mathbb{F}_{173})| = f_{173,841}(174)= 2^{10} \cdot 3^2 \cdot 5^2 \cdot 7^2 \cdot 29^6 \cdot 31 \cdot 41^2 \cdot 43^2 \cdot 89 \cdot 419^2 \cdot 719 \cdot 1061 \cdot 36571 \cdot 1269691 \cdot 1909421 ,$\\
$ |{J_0^+}^{new}(29^2)(\mathbb{F}_{233})| = f_{233,841}(234)= 2^{10} \cdot 3^2 \cdot 5 \cdot 7^2 \cdot 29^6 \cdot 43^2 \cdot 167^2 \cdot 211^2 \cdot  421 \cdot 1049 \cdot 3989 \cdot 317321 \cdot 422079165281099 ,$\\
$ |{J_0^+}^{new}(29^2)(\mathbb{F}_{347})| = f_{347,841}(348)= 2^8 \cdot 3^{12}  \cdot 5^3 \cdot 7^2 \cdot 11 \cdot 23^2 \cdot 29^6 \cdot 31 \cdot 43^2 \cdot 71 \cdot 127^2 \cdot 967^2 \cdot 9601 \cdot 783719 \cdot 7292986801 ,$\\
$ |{J_0^+}^{new}(29^2)(\mathbb{F}_{349})| = f_{349,841}(350)= 2^8 \cdot 5^9 \cdot 7^2 \cdot 13^2 \cdot 19 \cdot 23 \cdot 29^7 \cdot 43^2 \cdot 83^2 \cdot 103 \cdot 211 \cdot 3786151 \cdot 92610181 \cdot 3477902249 ,$\\
 $ |{J_0^+}^{new}(29^2)(\mathbb{F}_{463})| = f_{463,841}(464)= 2^{13} \cdot 5^7 \cdot 7^7 \cdot 29^6 \cdot 43^3 \cdot 59 \cdot 97^3 \cdot 461^3 \cdot  1459 \cdot 23656223369 \cdot 230667656992649 ,$\\

\noindent $ |{J_0^+}^{new}(31^2)(\mathbb{F}_{61})| = f_{61,961}(62) = 2^{10} \cdot 5 \cdot 7 \cdot 11 \cdot 31^{7} \cdot 137 \cdot 179 \cdot 1249 \cdot 10369 \cdot 26699 \cdot 38177 \cdot 2302381 \cdot 24080801 ,$\\
$ |{J_0^+}^{new}(31^2)(\mathbb{F}_{311})| = f_{311,961}(312)= 2^8 \cdot 3^2 \cdot 5 \cdot 7^2 \cdot 11 \cdot 31^7 \cdot 409 \cdot 3793^2 \cdot 51551^2 \cdot 162691 \cdot 2302381 \cdot 22340831^2 \cdot 24037019 ,$\\
$ |{J_0^+}^{new}(31^2)(\mathbb{F}_{373})| = f_{373,961}(374)= 
2^4 \cdot 5 \cdot 7^2 \cdot 11^2 \cdot 13^2 \cdot 31^6 \cdot 251 \cdot 449 \cdot 2302381 \cdot 366424077359 \cdot 13600706515978033^2 ,$\\
$ |{J_0^+}^{new}(31^2)(\mathbb{F}_{433})| = f_{433,961}(434)= 2^6 \cdot 3^6 \cdot 5 \cdot 7 \cdot 11 \cdot 17 \cdot 31^{11}  \cdot 89 \cdot 97 \cdot 191 \cdot 401 \cdot 1153 \cdot 54331 \cdot 126961 \cdot 2302381 \cdot 12958271 \cdot 53053053405791.$\\

\noindent For $ 11 \le p \le 23 $ we have:
$$ \mathop
\textbf{\mbox{ gcd }}_{ \begin{scriptsize} \begin{array}{c} q < 500 \mbox{ prime},   \\  q \equiv \pm 1 \mbox{ mod }p \end{array} \end{scriptsize} } |{J_0^+}^{new}(p^2)(\mathbb{F}_{q})| = |\mathfrak{C}^+_{ns}(p)|.  $$
For $ p=29 $ and $ p=31 $ we have: 
$$ \mathop\textbf{\mbox{ gcd }}_{ \begin{scriptsize} \begin{array}{c} q < 500 \mbox{ prime},   \\  q \equiv \pm 1 \mbox{ mod }p \end{array} \end{scriptsize} } |{J_0^+}^{new}(p^2)(\mathbb{F}_{q})| = 4|\mathfrak{C}^+_{ns}(p)|.$$
We can improve the result by using the isogeny (cfr.\cite[Paragraph 6.6]{Diamond:mf}):
$$ {J_0^+}^{new}(p^2) \longrightarrow \mathop{\bigoplus_{f}} A'_{p,f}  $$ 
where the sum is taken over the equivalence classes of newforms $ f\in  S_2(\Gamma^+_0(p^2)) $. Two forms $ f $ and $ g $ are declared equivalent if $ g=f^{\sigma} $ for some automorphism $ \sigma:  \mathbb{C} \longrightarrow \mathbb{C}$. Denote with $ \mathbb{K}_f $ the number field of $ f $. We have:
$$ \mathop
\textbf{\mbox{ gcd }}_{ \begin{scriptsize} \begin{array}{c} q < 500 \mbox{ prime},   \\  q \equiv \pm 1 \mbox{ mod }29 \end{array} \end{scriptsize} } |A'_{29,f_1}(\mathbb{F}_{q})| = 7^2 \mbox{ where }\mathbb{K}_{f_1}= \mathbb{Q}(\sqrt{2}), $$
$$ \mathop
\textbf{\mbox{ gcd }}_{ \begin{scriptsize} \begin{array}{c} q < 500 \mbox{ prime},   \\  q \equiv \pm 1 \mbox{ mod }29 \end{array} \end{scriptsize} } |A'_{29,f_2}(\mathbb{F}_{q})| = 29 \mbox{ where }\mathbb{K}_{f_2}= \mathbb{Q}(\sqrt{5}), $$
$$ \mathop
\textbf{\mbox{ gcd }}_{ \begin{scriptsize} \begin{array}{c} q < 500 \mbox{ prime},   \\  q \equiv \pm 1 \mbox{ mod }29 \end{array} \end{scriptsize} } |A'_{29,f_3}(\mathbb{F}_{q})|= \mathop
\textbf{\mbox{ gcd }}_{ \begin{scriptsize} \begin{array}{c} q < 500 \mbox{ prime},   \\  q \equiv \pm 1 \mbox{ mod }29 \end{array} \end{scriptsize} } |A'_{29,f_4}(\mathbb{F}_{q})|=   2^3 \cdot 43 $$   where  $\mathbb{K}_{f_3}= \mathbb{K}_{f_4} $ and $ [\mathbb{K}_{f_3}:\mathbb{Q}]=3$, 
  $$ \mathop
\textbf{\mbox{ gcd }}_{ \begin{scriptsize} \begin{array}{c} q < 500 \mbox{ prime},   \\  q \equiv \pm 1 \mbox{ mod }29 \end{array} \end{scriptsize} } |A'_{29,f_5}(\mathbb{F}_{q})|= 5 \cdot 29^2 \mbox { where } [\mathbb{K}_{f_5}:\mathbb{Q}]=6, $$
$$ \mathop
\textbf{\mbox{ gcd }}_{ \begin{scriptsize} \begin{array}{c} q < 500 \mbox{ prime},   \\  q \equiv \pm 1 \mbox{ mod }29 \end{array} \end{scriptsize} } |A'_{29,f_6}(\mathbb{F}_{q})|= 29^3 \mbox { where } [\mathbb{K}_{f_6}:\mathbb{Q}]=8, $$
$$ \mathop
\textbf{\mbox{ gcd }}_{ \begin{scriptsize} \begin{array}{c} q < 500 \mbox{ prime},   \\  q \equiv \pm 1 \mbox{ mod }31 \end{array} \end{scriptsize} } |A'_{31,g_1}(\mathbb{F}_{q})| = 2^2 \cdot 7 \mbox{ where }\mathbb{K}_{g_1}= \mathbb{Q}(\sqrt{2}), $$
$$ \mathop
\textbf{\mbox{ gcd }}_{ \begin{scriptsize} \begin{array}{c} q < 500 \mbox{ prime},   \\  q \equiv \pm 1 \mbox{ mod }31 \end{array} \end{scriptsize} } |A'_{31,g_2}(\mathbb{F}_{q})| = 5 \cdot 11 \mbox{ where }\mathbb{K}_{g_2}= \mathbb{Q}(\sqrt{5}), $$
$$ \mathop
\textbf{\mbox{ gcd }}_{ \begin{scriptsize} \begin{array}{c} q < 500 \mbox{ prime},   \\  q \equiv \pm 1 \mbox{ mod }31 \end{array} \end{scriptsize} } |A'_{31,g_3}(\mathbb{F}_{q})|= 2302381 \mbox { where } [\mathbb{K}_{g_3}:\mathbb{Q}]=8, $$
$$ \mathop  
\textbf{\mbox{ gcd }}_{ \begin{scriptsize} \begin{array}{c} q < 500 \mbox{ prime},   \\  q \equiv \pm 1 \mbox{ mod }31 \end{array} \end{scriptsize} } |A'_{31,g_4}(\mathbb{F}_{q})|= 31^6 \mbox { where } [\mathbb{K}_{g_4}:\mathbb{Q}]=16. $$
So for $ p=29 $ and $ p=31 $ we have:
$$ \prod_f \mathop
\textbf{\mbox{ gcd }}_{ \begin{scriptsize} \begin{array}{c} q < 500 \mbox{ prime},   \\  q \equiv \pm 1 \mbox{ mod }p \end{array} \end{scriptsize} } |A'_{p,f}(\mathbb{F}_{q})|= |\mathfrak{C}^+_{ns}(p)|  $$
where the product runs over all equivalence classes of newforms.

\subsection*{Acknowledgements} I would like to express my gratitude to my advisor  Prof. René Schoof for his valuable remarks during the development of this research work, especially for the last section.

\end{document}